\let\epsilon\varepsilon
\let\phi\varphi
\let\theta\vartheta
\newtheorem{mytheorem}{Theorem}[section]
\newtheorem{myprop}[mytheorem]{Proposition}
\newtheorem{mycor}[mytheorem]{Corollary} 
\theoremstyle{definition}
\newtheorem{mydef}[mytheorem]{Definition}
\newtheorem{myex}[mytheorem]{Example}
\newtheorem{myre}[mytheorem]{Remark}
\newtheorem{mylemma}[mytheorem]{Lemma}
\newcommand{\C}{\mathbb{C}}
\newcommand{\R}{\mathbb{R}}
\newcommand{\Z}{\mathbb{Z}}
\newcommand{\N}{\mathbb{N}}
\newcommand{\ca}{\mathbf{a}}
\newcommand{\ip}[1]{{\left\langle #1\right\rangle}}
\newcommand{\norm}[1]{\left\lVert #1\right\rVert}
\renewcommand{\vec}[1]{\mathbf{#1}}
\DeclareMathOperator{\e}{e}
\renewcommand{\d}{\mbox{d}}
\renewcommand{\epsilon}{\varepsilon}
\begin{document}
\title[On a discrete transform of homogeneous decomposition spaces]{On a discrete transform of homogeneous decomposition spaces } \author{Zeineb Al-Jawahri and Morten Nielsen }   \date{\today}
\begin{abstract}
We introduce almost diagonal matrices in the setting of (anisotropic) discrete homogeneous Triebel-Lizorkin type spaces and homogeneous modulation spaces, and it is shown that the class of almost diagonal matrices is closed under matrix multiplication.  

We then connect the results to the continuous setting and show that the  "change of frame" matrix for a pair of time-frequency frames, with suitable decay properties, is almost diagonal.  As an application of this result, we consider a construction of compactly supported frame expansions for homogeneous decomposition spaces of Triebel-Lizorkin type and for the associated modulation spaces. 
\end{abstract}
\subjclass[2010]{42B35, 42C15, 41A17}
\keywords{Decomposition space, homogeneous space, anisotropic smoothness space, modulation space, Besov space, $\alpha$-modulation space}
\maketitle

\section{Introduction}

Function spaces based on anisotropic Littlewood-Paley decompositions have attracted considerable interest in recent years, see for example \cite{BH05,MR3016517,MR3345605,MR3048591, MR3981281,MR3969420,MR3707993,MR4029179,BN08,MR3303346} and reference therein. This renewed interest in such spaces is to a large extent driven by advances in the study of partial and pseudo-differential operators, where there is a natural desire to be able to better model and analyse anisotropic phenomena. The connection to constructive algorithms suitable for applications and numerics is often made possible by considering suitable discretised sampled versions of the underlying Littlewood-Paley decomposition.

In the present paper we will study additional features of certain discrete representations of homogeneous decomposition smoothness spaces. The theory of decomposition spaces, introduced by  Feichtinger and  Gr\"obner \cite{MR809337} and by Feichtinger \cite{MR910054},  is an abstract  general machinery for building function spaces.  This machinery, when tuned to decompositions of the frequency domain,  covers a large range of smoothness spaces that have turned out to be of interest for applications.  The close connection between decomposition spaces and  classical smoothness space such as modulation spaces was  first pointed out by Triebel \cite{MR725159}. Triebel's work  later inspired a more general treatment of decomposition smoothness spaces  \cite{BN08,Nielsen}. In the same spirit, very general homogeneous (anisotropic) Besov and Triebel-Lizorkin spaces based on dyadic decompositions were considered by Bownik \cite{MR2179611} and by Bownik and Ho  \cite{BH05}. In a similar dyadic setup, a general approach to homogeneous spaces has been studied in detail recently by Triebel \cite{MR3409094,MR3682619}.

 The present authors considered a general construction of homogeneous smoothness spaces, based on structured decomposition of the frequency space $\mathbb{R}^d\backslash\{0\}$,  in \cite{AN19}. Adapted tight frames for $L_2(\mathbb{R}^d)$ were were considered in \cite{AN19} and they were shown to provide universal decompositions of tempered distributions with convergence in the tempered distributions modulo polynomials. Moreover, atomic decompositions of the corresponding homogeneous smoothness spaces were obtained, completely characterising the smoothness spaces by a sparseness condition on the frame coefficients, facilitating compression of the elements of such homogeneous smoothness spaces using the corresponding frame coefficients. An alternative approach to homogeneous decomposition type spaces based on the theory of Coorbit-spaces has been  considered by F\"uhr and Voigtlaender in \cite{MR3345605}.
 
 In the present paper, which can be considered a continuation of \cite{AN19}, we study additional properties of discrete representations of  homogeneous decomposition spaces of Besov and Triebel-Lizorkin type. Most importantly, in Section \ref{sec:almostdiagonal}, we introduce the notion of almost diagonal matrices for homogeneous decomposition spaces of Besov and Triebel-Lizorkin type and we use the tight frame introduced \cite{AN19} to link such matrices to bounded operators on Besov and Triebel-Lizorkin type spaces.
 
 The main contribution of the present paper is a detailed proof that "change of frame coefficient" matrices between any two suitably localised adapted time-fre-quency frames is almost diagonal.  This also leads to a natural definition of decomposition space molecules. 
 In the inhomogeneous setting, similar results were considered in \cite{NR12}. However, as it turns out, the homogeneous setup presents several additional challenges that will be addressed in this paper. The result can be found in Section \ref{sec:almostdiagonal}.
 
As an application of the results obtained, we study various perturbation of the  frame from \cite{AN19} to obtain  compactly supported frames for homogeneous decomposition spaces of Besov and Triebel-Lizorkin type. This is considered in Section \ref{sec:compactly}. Sections 2 and 3 contain the needed preliminary facts and results.

\section{Preliminaries}
We now introduce the notation needed to define and study homogeneous decomposition spaces. The terminology is to a large degree inherited from Feichtinger and Gr\"obner, see \cite{MR809337,MR910054}, and from \cite{Nielsen}. 

\subsection{Anisotropic geometry}
Let $|\cdot|$ denote the Euclidean norm on $\mathbb{R}^d$ induced by the standard inner product $\ip{\cdot,\cdot}$ and let $\vec{a} = (a_1,\ldots,a_d)\in \mathbb{R}_+^d$ be an anisotropy on $\mathbb{R}^d$ scaled such that $a_i > 1$ and $\sum_{i=1}^d a_i = \nu > d$.
For $t>0$, define the anisotropic dilation matrix as $D_{\vec{a}}(t) := \text{diag}(t^{a_1},\ldots,t^{a_d})$.
We mention that the specific scaling assumption for the anisotropy $\ca$ is chosen to facilitate certain technical estimates in Section \ref{sec:almostdiagonal}, see  Remark \ref{re:scale}.

\begin{mydef}\label{def:aninorm} 
	We define the function $|\cdot|_{\vec{a}} : \R^d\to \R_+$ by setting $|0|_{\vec{a}} := 0$ and for $\xi\in\R^d\backslash\{0\}$ we set $|\xi|_{\vec{a}} = t$, where $t$ is the unique solution to the equation $\left|D_{\vec{a}}(1/t)\xi\right| = 1$.
\end{mydef}

According to \cite{Stein} we have the following standard properties of $|\cdot|_{\vec{a}}$:

\begin{itemize}
	\item[(1)] $|\cdot|_{\vec{a}} \in C^\infty(\mathbb{R}^d\backslash\{0\})$.
	\item[(2)] There exists a constant $K\geq 1$ such that
	\begin{equation*}
	|\xi + \zeta|_{\vec{a}} \leq K(|\xi|_{\vec{a}}+|\zeta|_{\vec{a}}), \quad \forall \; \xi,\zeta\in\mathbb{R}^d\backslash\{0\}.
	\end{equation*}
	\item[(3)] For $t>0$,
	\begin{equation}\label{eq:conditionaninorm}
	|D_{\vec{a}}(t)\xi|_{\vec{a}} = t|\xi|_{\vec{a}}
	\end{equation}
	\item[(4)] For $\xi \in \mathbb{R}^d\backslash\{0\}$,
	\begin{align}
	c_1 |\xi|^{\alpha_1} &\leq |\xi|_{\vec{a}} \leq c_2 |\xi|^{\alpha_2}, \quad \text{if} \; |\xi|_{\vec{a}} \geq 1, \; \text{and} \nonumber \\
	 c_3 |\xi|^{\alpha_2} &\leq |\xi|_{\vec{a}} \leq c_4 |\xi|^{\alpha_1}, \quad \text{if} \; |\xi|_{\vec{a}} < 1, \label{eq:aniless1}
	\end{align}
	where $\alpha_1 := \min_{1\leq i \leq d} a_i$ and $\alpha_2:=\max_{1\leq i\leq d} a_i$.
\end{itemize}

The anisotropic norm $|\cdot|_{\vec{a}}$ from Definition \ref{def:aninorm} induces a quasi-distance $\d: \mathbb{R}^d \times \mathbb{R}^d \to [0,\infty)$ given by $\d(\xi,\zeta) := |\xi-\zeta|_{\vec{a}}$. The (anisotropic) ball of radius $r>0$ centered at $\xi\in\R^d$ is given by
$$B_\mathbf{a}(\xi,r):=\{\zeta\in \R^d:\d(\xi,\zeta)<r\}.$$
 One can verify that $(\mathbb{R}^d\backslash\{0\},\d,\d \xi)$ is a space of homogeneous type.

\subsection{Maximal operators}
Since we will study function spaces of Triebel-Lizorkin type,  maximal function estimates will play a central role.

For $0<r<\infty$, the maximal function of Hardy-Littlewood type is defined by
\begin{equation}\label{eq:maximaloperator}
M_r^{\ca}u(x) := \sup_{t>0} \left( \frac{1}{\kappa^{\ca}_d \cdot t^\nu} \int_{B_{\ca}(x,t)} |u(y)|^r\d y \right)^{\frac{1}{r}}, \quad u \in L_{r,\text{loc}}(\R^d),
\end{equation}
where $\kappa_d^{\ca} := |B_{\ca}(0,1)|$. Moreover, due to the structured anisotropic setup, we have 
the following vector-valued Fefferman-Stein maximal inequality, see \cite{MR1232192}.
For $0<r\leq q \leq \infty$, and $r<p<\infty$, there exists $C>0$ such that
\begin{equation*}\label{eq:feffermanstein}
\norm{ \left( \sum_{j\in J} |M_r^{\ca} f_j|^q\right)^{1/q}}_{L_p} \leq C \norm{\left( \sum_{j\in J} |f_j|^q\right)^{1/q}}_{L_p}.
\end{equation*}
If $q=\infty$, then the inner $\ell_q$-norm is replaced by the $\ell_\infty$-norm. \\

\section{Homogeneous Triebel-Lizorkin Type Spaces}
In this section we define homogeneous Triebel-Lizorkin (T-L)  and Besov type spaces. This is done by considering certain structured admissible coverings of the frequency space $\mathbb{R}^d\backslash\{0\}$. The coverings are used to construct a suitable resolution of unity which can be used define the T-L type spaces and the decomposition spaces. 

We simplify the construction in the sense that we use a suitable collection of $\d$-balls to cover $\mathbb{R}^d\backslash\{0\}$, where the radius of a given ball in the cover is a so-called hybrid regulation function. 

A simple construction of a tight-frame for the various homogeneous T-L  and Besov type spaces spaces is also considered. The particular frame will be shown to fully characterise  the (quasi-)norm on homogeneous T-L  and Besov type, and it will prove essential for our analysis of almost diagonal matrices in Section \ref{sec:almostdiagonal}. Let us recall the notion of a moderate function.
\begin{mydef}\label{dmod}
 A function $h:\mathbb{R}^d\backslash\{0\} \to (0,\infty)$ is called $\d$-moderate if there exist constants $R,\delta_0 > 0$ such that $\d(\xi,\zeta) \leq \delta_0 h(\xi)$ implies $R^{-1} \leq h(\xi)/h(\zeta) \leq R$ for all $\xi,\zeta\in\mathbb{R}^d\backslash\{0\}$. 
\end{mydef}

We now use a ramp function to glue two moderate functions together. The idea is to separately  ''regulate''  low frequencies and  high frequencies.

\begin{mydef} \label{def:hfunction}
	Take a non-negative ramp function $\rho \in \mathbb{C}^s$ for some $s\geq 1$ satisfying
\begin{equation}\label{eq:rho}	
\rho(\xi) = \begin{cases}
1& \text{for $0<|\xi|_{\vec{a}} \leq \frac{2}{3}$} \\ 
\\
0& \text{for $|\xi|_{\vec{a}} \geq \frac{4}{3}$}
\end{cases}
\end{equation}
and define $\tilde{h}: \mathbb{R}^d\backslash\{0\} \to (0,\infty)$ as
\begin{equation}\label{eq:hybrid}
\tilde{h}(\xi) = \rho h_1(\xi) + (1-\rho)h_2(\xi),
\end{equation}
where $h_1(\xi)$ and $h_2(\xi)$ are both $\d$-moderate functions satisfying
\begin{subequations}
	\begin{align}\label{eq:h1}
	c_0|\xi|_{\vec{a}}^{r} \leq h_1(\xi) &\leq c_1|\xi|_{\vec{a}}, &\text{for
		some $c_0, c_1>0$ and $r\geq 1$,} \\ \intertext{and}
	\label{eq:h2}c_2 \leq h_2(\xi) &\leq c_3|\xi|_{\vec{a}}, &\text{for some $c_2, c_3 > 0$.}
	\end{align}
\end{subequations}
We call $\tilde{h}: \mathbb{R}^d\backslash\{0\} \to (0,\infty)$ a \textit{hybrid regulation function}. 
\end{mydef}
We mention that according to \cite[Lemma 2.7]{AN19}, $\tilde{h}$ itself is a  $\d$-moderate function in the sense of Definition \ref{dmod}.

\begin{myex}\label{eks:hrf}
Let $\alpha \in [0,1]$. Then
\begin{equation*}
\tilde{h}(\xi) := \rho h_1(\xi)+(1-\rho)h_2(\xi),
\end{equation*}
where $\rho$ satisfies \eqref{eq:rho}, $h_1(\xi) := |\xi|_{\ca}^{2-\alpha}$ and $h_2(\xi) := |\xi|_{\ca}^\alpha$, is a hybrid regulation function.  
\end{myex}

With a hybrid regulation function $\tilde{h}$, we can construct a structured admissible covering by open (anisotropic) balls. 

\begin{mylemma}\label{Lemma5}
	Consider $(\mathbb{R}^d\backslash\{0\},\d,\d\xi)$ and let $\tilde{h}: \mathbb{R}^d\backslash\{0\} \to (0,\infty)$ be a hybrid regulation function. 
	Pick $0<\delta < \delta_0/2$. Then there exist 
	 an ordered countable (infinite) index set $J\ne \emptyset$ and 
	an admissible covering $\mathcal{Q}=\{B_{\mathbf{a}}(\xi_j,\delta \tilde{h}(\xi_j))\}_{j\in J}$ of $\mathbb{R}^d\backslash\{0\}$ and a constant $0<\delta'<\delta$ such that $\{B_{\mathbf{a}}(\xi_j,\delta'\tilde{h}(\xi_j))\}_{j\in J}$ are pairwise disjoint.
\end{mylemma}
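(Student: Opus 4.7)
The plan is to build the family $\{\xi_j\}_{j\in J}$ by a standard Vitali/Zorn maximality argument tailored to the quasi-metric measure space $(\mathbb{R}^d\setminus\{0\},\d,\d\xi)$. Let $K\ge 1$ be the quasi-triangle constant from property (2) of $|\cdot|_{\mathbf a}$, and let $R,\delta_0>0$ be the moderation constants of $\tilde h$. Choose
\[
0<\delta'<\frac{\delta}{K(R+1)},
\]
which in particular ensures $2K\delta'<\delta_0$. Consider the collection of subsets $S\subset\mathbb{R}^d\setminus\{0\}$ for which $\{B_{\mathbf a}(\xi,\delta'\tilde h(\xi))\}_{\xi\in S}$ is pairwise disjoint. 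This collection is non-empty and closed under unions of chains, so Zorn's lemma provides a maximal element $S$; fixing an arbitrary ordering yields $\{\xi_j\}_{j\in J}:=S$.

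The heart of the argument is to show that the associated $\delta$-dilates cover $\mathbb{R}^d\setminus\{0\}$. Suppose instead that some $\eta$ lies outside every $B_{\mathbf a}(\xi_j,\delta\tilde h(\xi_j))$. Maximality of $S$ forces $B_{\mathbf a}(\eta,\delta'\tilde h(\eta))\cap B_{\mathbf a}(\xi_j,\delta'\tilde h(\xi_j))\ne\emptyset$ for some $j$, and the quasi-triangle inequality gives
\[
\d(\eta,\xi_j)\le K\delta'\bigl(\tilde h(\eta)+\tilde h(\xi_j)\bigr)\le 2K\delta'\max\{\tilde h(\eta),\tilde h(\xi_j)\}<\delta_0\max\{\tilde h(\eta),\tilde h(\xi_j)\}.
\]
Applying Definition \ref{dmod} at whichever of the two centres realises the maximum produces $R^{-1}\le \tilde h(\eta)/\tilde h(\xi_j)\le R$. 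Re-inserting this comparison yields $\d(\eta,\xi_j)\le K\delta'(R+1)\tilde h(\xi_j)<\delta\tilde h(\xi_j)$, contradicting $\eta\notin B_{\mathbf a}(\xi_j,\delta\tilde h(\xi_j))$.

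Countability of $J$ is immediate: the disjoint $\delta'$-balls each carry positive Lebesgue measure, and $\mathbb{R}^d\setminus\{0\}$ is $\sigma$-finite. The bounded-overlap condition required for admissibility follows by a parallel case split: if $B_{\mathbf a}(\xi_i,\delta\tilde h(\xi_i))\cap B_{\mathbf a}(\xi_j,\delta\tilde h(\xi_j))\ne\emptyset$, then the same max-argument as above forces $\tilde h(\xi_i)\asymp\tilde h(\xi_j)$, so the disjoint $\delta'$-balls at all such neighbours $\xi_i$ embed into a single anisotropic ball of radius comparable to $\tilde h(\xi_j)$; a volume comparison in the homogeneous-type space then bounds the number of such $i$ uniformly in $j$. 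The one real obstacle is the asymmetric form of $\d$-moderateness in Definition \ref{dmod}: before $\tilde h(\eta)$ and $\tilde h(\xi_j)$ are known to be comparable, the estimate must be routed through their maximum, which is precisely why the threshold $2K\delta'<\delta_0$ has to appear in addition to the sharper bound $\delta'<\delta/(K(R+1))$.
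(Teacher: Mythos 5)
Your overall route --- a maximal system of pairwise disjoint balls $B_{\mathbf{a}}(\xi_j,\delta'\tilde{h}(\xi_j))$ produced by Zorn's lemma, followed by the moderation estimate to show that the $\delta$-dilates cover --- is the standard construction (the paper itself defers the proof to the covering construction of \cite{AN19}), and the covering part of your argument is correct: with $\delta'<\delta/(K(R+1))$ one indeed has $2K\delta'<\delta_0$, the case split through $\max\{\tilde{h}(\eta),\tilde{h}(\xi_j)\}$ legitimately triggers Definition \ref{dmod} (using the symmetry of $\d$), and the contradiction $\d(\eta,\xi_j)<\delta\tilde{h}(\xi_j)$ follows. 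Countability is fine; you should add one line that $J$ is necessarily infinite, since $\tilde{h}(\xi)\le C|\xi|_{\mathbf{a}}$ makes every ball in $\mathcal{Q}$ bounded while $\mathbb{R}^d\backslash\{0\}$ is not, so no finite subfamily can cover.

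The step that does not work as written is the admissibility (finite overlap) of $\{B_{\mathbf{a}}(\xi_j,\delta\tilde{h}(\xi_j))\}_{j\in J}$. You invoke ``the same max-argument as above'' for two intersecting $\delta$-balls, but that argument needs $2K\delta\le\delta_0$ in order to bring $\d(\xi_i,\xi_j)\le 2K\delta\max\{\tilde{h}(\xi_i),\tilde{h}(\xi_j)\}$ below the moderation threshold, whereas the hypothesis only gives $2\delta<\delta_0$; for a quasi-triangle constant $K>1$ (the paper assumes only $K\ge 1$) nothing in Definition \ref{dmod} allows you to apply moderation at the larger scale, so as stated the comparability $\tilde{h}(\xi_i)\asymp\tilde{h}(\xi_j)$ is not justified. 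The repair is short and bypasses the quasi-triangle inequality at this point: if $z\in B_{\mathbf{a}}(\xi_i,\delta\tilde{h}(\xi_i))\cap B_{\mathbf{a}}(\xi_j,\delta\tilde{h}(\xi_j))$, then $\d(\xi_i,z)\le\delta\tilde{h}(\xi_i)\le\delta_0\tilde{h}(\xi_i)$ and $\d(\xi_j,z)\le\delta_0\tilde{h}(\xi_j)$, so applying Definition \ref{dmod} twice through the common point $z$ gives $R^{-2}\le\tilde{h}(\xi_i)/\tilde{h}(\xi_j)\le R^{2}$; only after this does the quasi-triangle inequality place all such $\xi_i$ inside $B_{\mathbf{a}}(\xi_j,C\tilde{h}(\xi_j))$ with $C=C(K,R,\delta)$, and the volume count over the disjoint $\delta'$-balls (using $|B_{\mathbf{a}}(x,r)|=\kappa_d^{\mathbf{a}}r^{\nu}$ and $\tilde{h}(\xi_i)\ge R^{-2}\tilde{h}(\xi_j)$) yields a uniform bound on the number of neighbours. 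Note that this corrected argument only uses $\delta\le\delta_0$; the stronger hypothesis $\delta<\delta_0/2$ is there precisely so that the same reasoning applies verbatim to the dilated balls $B_{\mathbf{a}}(\xi_j,2\delta\tilde{h}(\xi_j))$ invoked immediately after the lemma.
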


Since the balls in the collection $\{B_{\mathbf{a}}(\xi_j,\delta'\tilde{h}(\xi_j))\}_{j\in J}$ are pairwise disjoint, it can be verified that $\{B_{\mathbf{a}}(\xi_j,2\delta \tilde{h}(\xi_j))\}_{j\in J}$ gives a structured admissible covering of $\mathbb{R}^d\backslash\{0\}$. 
We note that the covering $\mathcal{Q}$ from Lemma \ref{Lemma5} is generated by a family of invertible affine transformations applied to the $\d$-ball $Q:=B_{\mathbf{a}}(0,\delta)$. That is,
\begin{align}\label{eq:matrixA}
\{B_{\mathbf{a}}(\xi_j,\delta\tilde{h}(\xi_j))\}_{j\in J} := \{T_jQ\}_{j\in J}, \quad \text{where} \; T_jQ := A_j Q+\xi_j, \; \; A_j := D_{\vec{a}}(\tilde{h}(\xi_j)).
\end{align}	
The matrices $\{A_j\}_j$ and the frequencies $\{\xi_j\}_j$ will be kept fixed throughout the paper. \\

We can use the structured admissible covering $\mathcal{Q}$ from Lemma \ref{Lemma5} to generate a suitable resolution of unity that can be used to define the T-L type spaces and the decomposition spaces. Due to technical reasons we require the partiton of unity to satisfy the following. 

\begin{mydef}\label{def:bapu}
Let $\mathcal{Q} := \{Q_j\}_{j\in J} := \{T_jB_{\mathbf{a}}(0,\delta)\}_{j\in J}$ be a structured admissible covering of $\mathbb{R}^d\backslash\{0\}$. A corresponding bounded admissible partition of unity (BAPU) is a family of functions $\Psi = \{\psi_j\}_{j\in J} \subset \mathcal{S}(\mathbb{R}^d)$ satisfying
	\begin{itemize}
		\item[(1)] $\text{supp}(\psi_j) \subseteq T_jB_{\mathbf{a}}(0,2\delta)$ for all $j \in J$.
		\item[(2)] $\sum_{j\in J}\psi_j(\xi) = 1$ for all $\xi \in \mathbb{R}^d\backslash\{0\}$,
		\item[(3)] $\sup_{j\in J} \norm{\psi_j(T_j\cdot)}_{H_2^s} < \infty, s>0$, 
	\end{itemize}
where $\norm{f}_{H_2^s} := (\int |\mathcal{F}^{-1} f(x)|^2(1+|x|_{\ca})^{2s} \d x)^{1/2}$.
\end{mydef}

We use a standard trick for constructing a BAPU for $\mathcal{Q}$. Pick $\Psi \in \mathbb{C}^\infty(\mathbb{R}^d)$ non-negative with $\text{supp}(\Psi) \subseteq B_{\mathbf{a}}(0,2\delta)$ and $\Psi(\xi) = 1$ for $\xi \in B_{\mathbf{a}}(0,\delta)$. Then it can be shown that
\begin{align*}
\psi_j(\xi) := \frac{\Psi(T_j^{-1}\xi)}{\sum_{k\in J}\Psi(T_k^{-1}\xi)}
\end{align*}
defines a BAPU for $\mathcal{Q}$. 
For later use, we also introduce
\begin{equation}\label{eq:squarerootbapu}
\phi_j(\xi) := \frac{\Phi(T_j^{-1}\xi)}{\sqrt{\sum_{k\in J} \Phi(T_k^{-1}\xi)^2}},
\end{equation}
which in a sense defines a ''square root'' of a BAPU.

We can now define the homogeneous (anisotropic) T-L type spaces and the decomposition spaces. We let $ \mathcal{S}'\backslash\mathcal{P}$ denote the class of tempered distributions modulo polynomials defined on $\mathbb{R}^d$.

\begin{mydef}\label{def:complete}
Let $\tilde{h}$ be a hybrid regulation function and let $\mathcal{Q}$ be a structured admissible covering generated by $\tilde{h}$ of the type considered in Lemma \ref{Lemma5}. Let $\{\phi_j\}_{j\in J}$ be a corresponding BAPU and set $\phi_j(D)f := \mathcal{F}^{-1}(\phi_j\mathcal{F}f)$.
\begin{itemize}
	\item For $s\in\R, 0<p<\infty$ and $0<q\leq \infty$, we define the (anisotropic) homogeneous Triebel-Lizorkin space $\dot{F}_{p,q}^s(\tilde{h})$ as the set all $f\in \mathcal{S}'\backslash\mathcal{P}$ satisfying
	\begin{equation*}
	\norm{f}_{\dot{F}_{p,q}^s(\tilde{h})} := \norm{\left( \sum_{j\in J} |\tilde{h}(\xi_j)^s\phi_j(D)f|^q\right)^{1/q}}_{L_p} < \infty.
	\end{equation*}
	\item For $s \in \mathbb{R}, 0<p\leq \infty$ and $0<q<\infty$ we define the (anisotropic) homogeneous decomposition space $\dot{M}_{p,q}^s(\tilde{h})$ as the set of all $f\in\mathcal{S}'\backslash\mathcal{P}$ satisfying
	\begin{equation*}
	\norm{f}_{\dot{M}_{p,q}^s(\tilde{h})} =
	\left( \sum_{j\in J} \norm{\tilde{h}(\xi_j)^s \phi_j(D)f}_{L_p}^q \right)^{1/q} < \infty,
	\end{equation*}
	with the usual modification if $q=\infty$.
\end{itemize}
\end{mydef}

It can be verified that are $\dot{F}_{p,q}^s(\tilde{h})$ and  $\dot{M}_{p,q}^s(\tilde{h})$ are (quasi-)Banach spaces that only (up to norm equivalence)  depend on $\tilde{h}$ and not the particular choice of BAPU, see \cite{AN19,BN08}. We  mention that it is possible to consider other reservoirs of distributions  than  $\mathcal{S}'\backslash\mathcal{P}$ to build the function spaces, see Voigtlaender \cite{2016arXiv160102201V} for further details. 

Next we construct a tight frame for the homogeneous T-L type spaces and the associated decomposition spaces. Further details can be found in \cite{AN19}. 

\subsection{Construction of frames}
Consider the modified BAPU $\{\phi_j\}_{j\in J}$ given by \eqref{eq:squarerootbapu} associated with the admissible covering $\mathcal{Q} := \{T_jQ\}_{j\in J} := \{T_jB_{\mathbf{a}}(0,\delta)\}_{j\in J}$ generated by $\{T_j \cdot = A_j\cdot + \xi_j\}_{j\in J}$. Suppose $K_a$ is a cube in $\mathbb{R}^d$ (aligned with the coordinate axes) with side-length $2a$ satisfying $B_{\mathbf{a}}(0,2\delta) \subseteq K_a$. Set 
\begin{equation}\label{eq:tj}
t_j := \tilde{h}(\xi_j).
\end{equation}
Then we define
\begin{equation*}
e_{j,n}(\xi) := (2a)^{-\tfrac{d}{2}}t_j^{-\tfrac{\nu}{2}} \chi_{K_a}(T_j^{-1}\xi)\e^{-i\tfrac{\pi}{a}n\cdot T_j^{-1}\xi}, \quad j \in J, n\in\mathbb{Z}^d,
\end{equation*}
and
\begin{equation*}
\hat{\eta}_{j,n} := \phi_j e_{j,n}, \quad j \in J, n\in \mathbb{Z}^d.
\end{equation*} 
It can easily   be shown that $\{\eta_{j,n}\}_{j\in J,n\in\Z^d}$ is a tight frame for $L_2(\R^d)$. Letting $\hat{\mu}_{j}(\xi) := \phi_{j}(T_j\xi)$, we obtain a representation of $\eta_{j,n}$ in direct space,
\begin{align}\label{eq:eta}
\eta_{j,n}(x) := (2a)^{-\frac{d}{2}} t_j^{\frac{\nu}{2}} \e^{ix\cdot \xi_j} \mu_j(A_j x-\tfrac{\pi}{a}n).
\end{align}
Since $\phi_j \in \mathcal{S}(\mathbb{R}^d)$ has compact support in $Q_j$, all of its partial derivatives are continuous and have compact support. Hence, for every $\gamma \in \mathbb{N}_0^d$ and some $C_\gamma > 0$ we have 
\begin{align}\label{eq:decayofpartialmu}
|\partial^{\gamma}_{\xi} \hat{\mu}_{j}(\xi)| = |\partial^{\gamma}_{\xi} \phi_j(T_j\xi)| \leq C_{\gamma}\chi_{B_{\mathbf{a}}(0,2\delta)}(\xi).
\end{align}

We also need an estimate on $|\partial_x^\gamma \mu_j(x)|$. 
By the multinomial theorem it follows that, for any $N\in \mathbb{N}$ and $\beta \in \mathbb{N}_0^d$,
\[
|x|^N \sim (|x_1|+\cdots + |x_d|)^N = \sum_{|\beta|=N} \binom{N}{\beta} |\tilde{x}|^\beta,
\]
where $|\tilde{x}|^\beta = |x_1|^{\beta_1} \cdots |x_d|^{\beta_d}$. Thus, for any $\beta, \gamma \in \mathbb{N}_0^d$ we have
\begin{align*}
|(1+|x|_{\ca})^N \partial_x^\gamma \mu_j(x)| &\leq C \sum_{|\beta|\leq N} \binom{N}{\beta} |\tilde{x}^\beta \partial_x^\gamma \mu_j(x)| 
= C_{N} |\mathcal{F}^{-1} (\partial_\xi^\beta(\xi^\gamma \hat{\mu}_j(\xi)))| \\
&\leq C_N \int_{\mathbb{R}^d} |\partial_\xi^\beta (\xi^\gamma \hat{\mu}_j(\xi))|\d \xi \\
&\leq C_{N,\beta,\gamma}, 
\end{align*}
where the last inequality follows by \eqref{eq:decayofpartialmu}.
Rearranging terms yields
\begin{align}\label{eq:estimatepartialmu}
|\partial_x^\gamma \mu_j(x)| \leq C_{N,\beta,\gamma} (1+|x|_{\ca})^{-N}.
\end{align}

It turns out that $\{\eta_{j,n}\}_{j\in J,n\in\Z^d}$ constitutes a (universal) frame for $\dot{F}_{p,q}^s(\tilde{h})$ and $\dot{M}_{p,q}^s(\tilde{h})$. For a more precise statement of this fact we need to introduce the following associated sequence spaces. The following point sets will be useful for that.
\begin{equation}\label{eq:pointsets}
Q(j,n) = \left\{ y \in \R^d : A_jy + \frac{\pi}{a}n \in B_{\mathbf{a}}(0,1)\right\}
\end{equation}
It can be verified that there exists $n_0<\infty$ such that uniformly in $x$ and $j$ we have $\sum_{n\in\mathbb{Z}^d} \chi_{Q(j,n)} \leq n_0$. With this property in hand, we can define the sequence spaces associated with $\dot{F}_{p,q}^s(\tilde{h})$ and $\dot{M}_{p,q}^s(\tilde{h})$. 

\begin{mydef}
	Let $s\in\R, 0<p<\infty$ and $0<q\leq \infty$. We define the sequence space $\dot{f}_{p,q}^s(\tilde{h})$ as the set of all complex-valued sequences $\{c_{j,n}\}_{j\in J, n\in\mathbb{Z}^d} \subset \mathbb{C}$ satisfying
	\begin{equation*}
	\norm{c_{j,n}}_{\dot{f}_{p,q}^s(\tilde{h})} := \norm{\ \left( \sum_{j\in J} \sum_{n\in \Z^d} \left(t_j^{s+\frac{\nu}{2}} |c_{j,n}|\right)^q\chi_{Q(j,n)}\right)^{1/q}}_{L_p} < \infty 
	\end{equation*}
	Let $s\in \R, 0<p\leq \infty$ and $0<q<\infty$. We define the sequence space $\dot{m}_{p,q}^s(\tilde{h})$ as the set of all complex-valued sequences $\{c_{j,n}\}_{j\in J, n\in\mathbb{Z}^d} \subset \mathbb{C}$ satisfying
	\begin{equation*}
	\norm{c_{j,n}}_{\dot{m}_{p,q}^s(\tilde{h})} := \norm{ \left\{ t_j^{s + \frac{\nu}{2}-\frac{\nu}{p}} \left( \sum_{n\in\mathbb{Z}^d} \left|c_{j,n}\right|^p\right)^{1/p}\right\}_{j\in J}}_{\ell_q}
	\end{equation*}
	If $q=\infty$ or $p=\infty$ the $\ell_q$-norm or $\ell_p$-norm, respectively, is replaced by the $\ell_\infty$-norm.
\end{mydef}

Finally, we can verify that $\{\eta_{j,n}\}_{j\in J,n\in\Z^d}$ constitutes a frame for $\dot{F}_{p,q}^s(\tilde{h})$ and $\dot{M}_{p,q}^s(\tilde{h})$ in the following sense. 

\begin{myprop}\label{prop:frame}%
Assume that $s\in\R$, $0<p,q\le\infty$, $p<\infty$ for $\dot{F}^s_{p,q}(	\tilde{h})$, and $q<\infty$ for $\dot{M}^s_{p,q}(	\tilde{h})$. For any finite sequence $\{s_{k,n}\}_{k\in J,n\in \Z^d}\subset\mathbb{C}$, we have
\begin{equation*}
\Big\| \sum_{k\in J} \sum_{n\in \Z^d}s_{k,n}\eta_{k,n}\Big\|_{\dot{F}^s_{p,q}(\tilde{h})}\le C \|s_{k,n}\|_{\dot{f}^s_{p,q}(	\tilde{h})}.
\end{equation*}
Furthermore, $\{\eta_{k,n}\}_{k\in J,n\in\Z^d}$ is a frame for $\dot{F}^s_{p,q}(	\tilde{h})$,
\begin{equation*}
\|f\|_{\dot{F}^s_{p,q}(	\tilde{h})}\asymp\|\langle f, \eta_{k,n}\rangle\|_{\dot{f}^s_{p,q}(	\tilde{h})},\,\, f\in \dot{F}^s_{p,q}(	\tilde{h}).
\end{equation*}
Similar results hold for $\dot{M}^s_{p,q}(	\tilde{h})$ and $\dot{m}^s_{p,q}(\tilde{h})$.\\
\end{myprop}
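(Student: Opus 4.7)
The plan is to establish Proposition~\ref{prop:frame} in two steps, treating the synthesis inequality and the norm equivalence separately; in both cases the argument reduces to pointwise bounds for the $\phi_j(D)$-pieces that can be controlled by the anisotropic vector-valued Fefferman-Stein maximal inequality. I focus below on the Triebel-Lizorkin statement; the $\dot{M}$ version follows the same scheme with $\ell_q(L_p)$ in place of $L_p(\ell_q)$ and needs only standard Minkowski-type arguments in place of Fefferman-Stein.

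For the synthesis bound, fix $j\in J$ and apply $\phi_j(D)$ to the finite sum $g:=\sum_{k,n}s_{k,n}\eta_{k,n}$. Since $\phi_j$ is supported in $T_j B_{\ca}(0,2\delta)$ and $\hat{\eta}_{k,n}=\phi_k e_{k,n}$ is supported in $Q_k$, only indices $k$ with $Q_j\cap Q_k\ne\emptyset$ contribute, and admissibility of the covering bounds their number uniformly in $j$. For each such $k$, writing $\eta_{k,n}$ via \eqref{eq:eta} and using the rapid decay \eqref{eq:estimatepartialmu} of $\mu_k$, a direct convolution estimate gives
$$
\bigl|\phi_j(D)\bigl(\textstyle\sum_{n}s_{k,n}\eta_{k,n}\bigr)(x)\bigr|\lesssim t_k^{\nu/2}\sum_{n}|s_{k,n}|\,(1+|A_k x-\tfrac{\pi}{a}n|_{\ca})^{-N}
$$
for any $N\in\mathbb{N}$. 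Since $Q(k,n)$ has measure $\asymp t_k^{-\nu}$ and sits at the scale $A_k^{-1}(\tfrac{\pi}{a}n+B_{\ca}(0,1))$, for any $0<r<\min(p,q)$ the right side is dominated, once $N$ is taken sufficiently large, by $t_k^{\nu/2} M_r^{\ca}\bigl(\sum_n |s_{k,n}|\chi_{Q(k,n)}\bigr)(x)$. Because $\tilde{h}$ is $\d$-moderate, $t_j\asymp t_k$ whenever $Q_j\cap Q_k\ne\emptyset$, so multiplying by $t_j^s$ yields the pointwise majorant
$$
t_j^s\,|\phi_j(D)g(x)|\lesssim \sum_{k:Q_j\cap Q_k\ne\emptyset} M_r^{\ca}\!\Bigl(\sum_n t_k^{s+\nu/2}|s_{k,n}|\chi_{Q(k,n)}\Bigr)(x).
$$
Taking $\ell_q$-norms in $j$ (absorbing the uniformly finite overlap) and then $L_p$-norms, the vector-valued Fefferman-Stein inequality yields the desired synthesis bound.

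For the norm equivalence, the $L_2$ tight-frame expansion extends to a universal reproducing formula on $\mathcal{S}'\backslash\mathcal{P}$ by the convergence results in \cite{AN19}, so combined with the synthesis bound already proved one gets $\|f\|_{\dot{F}^s_{p,q}(\tilde{h})}\lesssim\|\langle f,\eta_{k,n}\rangle\|_{\dot{f}^s_{p,q}(\tilde{h})}$. For the reverse inequality, since $\hat{\eta}_{j,n}$ is supported in $Q_j$ and $\{\phi_j\}$ is a square-root BAPU, I would rewrite $\langle f,\eta_{j,n}\rangle=\langle \phi_j(D)f,\eta_{j,n}\rangle$ and invoke an anisotropic Peetre/Plancherel-P\'olya inequality: for $h\in\mathcal{S}'\backslash\mathcal{P}$ with $\hat{h}$ supported in $Q_j$ and any $0<r<\min(p,q)$,
$$
|h(x)|\lesssim \inf_{y\in B_{\ca}(x,c\,t_j)} M_r^{\ca}(h)(y).
$$
Applied to $h=\phi_j(D)f$ and paired against $\eta_{j,n}$, whose envelope by \eqref{eq:estimatepartialmu} is essentially $t_j^{\nu/2}$ times a rapidly decaying bump concentrated on $Q(j,n)$, this gives
$$
|\langle f,\eta_{j,n}\rangle|\lesssim t_j^{-\nu/2}\inf_{y\in Q(j,n)} M_r^{\ca}(\phi_j(D)f)(y).
$$
Multiplying by $t_j^{s+\nu/2}\chi_{Q(j,n)}$, summing over $n$, taking $\ell_q$ in $j$ and $L_p$ in $x$, and applying Fefferman-Stein one last time delivers $\|\langle f,\eta_{k,n}\rangle\|_{\dot{f}^s_{p,q}(\tilde{h})}\lesssim\|f\|_{\dot{F}^s_{p,q}(\tilde{h})}$.

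The main obstacle I anticipate is controlling, uniformly in $j$, the interaction between the anisotropic geometry, the hybrid regulation function $\tilde{h}$ with its two scaling regimes, and the fact that one works modulo polynomials. In particular, the Peetre-type sampling inequality must be set up with constants independent of $j$, which requires using the dilation covariance \eqref{eq:conditionaninorm} to reduce to a fixed reference ball before invoking maximal-function estimates. Special care is also needed at the lowest-frequency annuli, where $\tilde{h}$ transitions from $h_1$ to $h_2$ and the usual homogeneous scaling breaks down.
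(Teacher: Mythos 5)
The paper itself gives no proof of this proposition, deferring to \cite{AN19} and \cite{BN08}, and your outline reproduces precisely the standard Frazier--Jawerth-type argument used there: bounded frequency overlap plus maximal-function domination and the vector-valued Fefferman--Stein inequality for the synthesis bound, and a Peetre/Plancherel--P\'olya sampling inequality together with the tight-frame reproducing formula on $\mathcal{S}'\backslash\mathcal{P}$ for the norm equivalence. One small wrinkle: the identity $\langle f,\eta_{j,n}\rangle=\langle \phi_j(D)f,\eta_{j,n}\rangle$ is not literally true (since $\phi_j^2\ne\phi_j$), but because $\mathrm{supp}\,\phi_j\subset T_jK_a$ one has the exact sampling formula $\langle f,\eta_{j,n}\rangle=c\,t_j^{-\nu/2}\,\omega_{j,n}\,\phi_j(D)f(x_{j,n})$ with $|\omega_{j,n}|=1$, which yields exactly the estimate you then feed into the maximal-function argument.
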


The proof of Proposition \ref{prop:frame} in the $\dot{M}^s_{p,q}(\tilde{h})$-case can be found in  \cite{AN19}. The proof in the $\dot{F}^s_{p,q}(\tilde{h})$-case is similar using  the modifications outlined in \cite{BN08}.

\section{Almost diagonal matrices}\label{sec:almostdiagonal}
In this section we introduce and study a class of almost diagonal matrices for the sequence spaces $\dot{f}_{p,q}^s(\tilde{h})$ and $\dot{m}_{p,q}^s(\tilde{h})$ which corresponds to the function spaces $\dot{F}_{p,q}^s(\tilde{h})$ and $\dot{M}_{p,q}^s(\tilde{h})$, respectively. Our main contribution is to show that for any pairs of decomposition space frames with suitable localisation and smoothness properties, the corresponding  "change of frame" matrix will be almost diagonal. 

We say that a matrix $\mathbf{A} := \{a_{(k,m)(j,n)}\}_{k,j\in J,m,n\in \Z^d}$ belongs to the class $\text{ad}_{p,q}^s(\tilde{h})$ if its entries $| a_{(k,m)(j,n)}|$ decay at a certain rate apart from the diagonal. 
Based on the experience gained from earlier studies, see e.g.\ \cite{MR1070037,NR12,MR3303346,MR3981281}, we propose the following definition of almost diagonal matrices on  on $\dot{f}^s_{p,q}(h)$ and $\dot{m}^s_{p,q}(h)$.
\begin{mydef}\label{def:almostdiagonal}
Assume that $s\in \R$, $0<p,q\le \infty$, $p<\infty$ for $\dot{f}^s_{p,q}(h)$, and $q< \infty$ for $\dot{m}^s_{p,q}(h)$. Let $r:=\min(1,p,q)$. A matrix  $\mathbf{A} := \{a_{(k,m)(j,n)}\}_{k,j\in J,m,n\in \Z^d}$
is called almost diagonal on $\dot{f}^s_{p,q}(h)$ and $\dot{m}^s_{p,q}(h)$ if
there exist $C, \delta>0$ such that
\begin{align*}
|a_{(j,m)(k,n)}|
\le& C \bigg(\frac{t_k}{t_j}\bigg)^{s+\frac{\nu}{2}}
\min\bigg(\bigg(\frac{t_j}{t_k}\bigg)^{\frac{\nu}{r}+\frac{\delta}{2}},\bigg(\frac{t_k}{t_j}\bigg)^{\frac{\delta}{2}}\bigg) c_{jk}^\delta\\
&\phantom{C}\times
(1+\min(t_k,t_j)|x_{k,n}-x_{j,m}|_B)^{-\frac{\nu}{r}-\delta},
\end{align*}
where
\begin{equation*}
c_{jk}^\delta:=\min\bigg(\bigg(\frac{t_j}{t_k}\bigg)^{\frac{\nu}{r}+\delta},\bigg(\frac{t_k}{t_j}\bigg)^{\delta}\bigg)
(1+\max(t_k,t_j)^{-1}|\xi_k-\xi_j|_A)^{-\frac{\nu}{r}-\delta}
\end{equation*}	
%
	with $t_j$ defined in \eqref{eq:tj} and $x_{j,n}$ defined by
	\begin{equation}\label{eq:x}
	x_{j,n} = A_j^{-1} \frac{\pi}{a}n, \quad j\in J, n\in\Z^d.
	\end{equation}
	 We denote the set of almost diagonal matrices on $\dot{f}_{p,q}^s(\tilde{h})$ and $\dot{m}_{p,q}^s(\tilde{h})$ by $\text{ad}_{p,q}^s(\tilde{h})$. 
\end{mydef}


There is an apparent similarity with the definition of almost diagonal matrices in the inhomogeneous setup considered in \cite{NR12}. However, it is important to notice that the sequence of dilation parameters $\{t_j\}_{j\in J}$ is not bounded away from zero in Definition \ref{def:almostdiagonal} due to the homogeneous setup unlike the case considered in  \cite{NR12}.

An important feature of almost diagonal matrices is stated in the following proposition showing that matrix composition is closed on the class of almost diagonal matrices. This will be useful when proving our main result of this section, Theorem \ref{thm:almostdiagonal}. Let us state the result, which is related to the corresponding result in the inhomogeneous case,  \cite[Proposition 3.4]{NR12}. However, since the dilation parameters $\{t_j\}$ are not bounded from below in the present homogeneous case, we have included a proof of Proposition \ref{prop:lukket} in the Appendix.

\begin{myprop}\label{prop:lukket}
The matrix product of almost diagonal matrices is almost diagonal. More precisely, we have $\text{ad}_{p,q}^s(\tilde{h}) \circ \text{ad}_{p,q}^s(\tilde{h}) \subseteq \text{ad}_{p,q}^s(\tilde{h})$.
\end{myprop}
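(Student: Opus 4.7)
The plan is to estimate the entries of the matrix product $\mathbf{C} = \mathbf{A}\mathbf{B}$, namely
\[
c_{(k,m)(j,n)} = \sum_{i\in J}\sum_{l\in\mathbb{Z}^d} a_{(k,m)(i,l)}\,b_{(i,l)(j,n)},
\]
by inserting the almost-diagonal bound from Definition \ref{def:almostdiagonal} for each factor and performing two nested summations: first over $l \in \mathbb{Z}^d$ at fixed scale $i \in J$, and then over $i$. The algebraic heart of the argument is the quasi-triangle inequality for $|\cdot|_{\ca}$ (property (2) of the anisotropic norm). After inserting $\pm x_{i,l}$ at the spatial level and $\pm \xi_i$ at the frequency level, this inequality bounds the target output kernel by the product of the two input kernels, at the cost of an explicit scale-ratio factor of the form $(t_i/\min(t_j,t_k))^{\alpha}$ or $(\max(t_j,t_k)/t_i)^{\alpha}$. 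These extra factors are then absorbed into the weights during the scale summation.

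For the inner sum over $l$, the points $\{x_{i,l}\}_{l\in\mathbb{Z}^d} = \{A_i^{-1}(\pi/a)l\}_{l\in\mathbb{Z}^d}$ form an anisotropic lattice with spacing of order $t_i^{-1}$, so a lattice-versus-integral comparison yields a ``convolution''-type estimate
\[
\sum_{l\in\mathbb{Z}^d}(1+\min(t_k,t_i)|x_{k,m}-x_{i,l}|_{\ca})^{-\nu/r-\delta}(1+\min(t_i,t_j)|x_{i,l}-x_{j,n}|_{\ca})^{-\nu/r-\delta} \lesssim \Lambda(i,j,k)\,(1+\min(t_k,t_j)|x_{k,m}-x_{j,n}|_{\ca})^{-\nu/r-\delta/2},
\]
where $\Lambda$ is an explicit power of the scale ratios. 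The hypothesis $\nu/r + \delta > \nu$, which holds since $r=\min(1,p,q)\le 1$, guarantees convergence of the lattice sum. The loss of exponent from $\delta$ to $\delta/2$ is the familiar small loss when bounding a product of two Peetre-type kernels by a single one, and this is exactly why the definition of almost-diagonal matrices already includes $\delta/2$ in the ``middle'' factor.

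For the outer sum over $i \in J$, I would partition $J$ into the three regimes $t_i \le \min(t_j,t_k)$, $\min(t_j,t_k) \le t_i \le \max(t_j,t_k)$, and $t_i \ge \max(t_j,t_k)$. In each regime the minimum in the definition of $c^\delta_{\cdot\cdot}$ is resolved into an explicit power of a scale ratio, and combining the two frequency-decay factors via the quasi-triangle inequality leaves a single frequency-decay factor of the required form. The remaining scale sum is organised by dyadic shells $\{i : 2^\ell \le t_i < 2^{\ell+1}\}$, and for each fixed shell the sum over centers $\xi_i$ is controlled by the finite overlap of the covering $\mathcal{Q}$ from Lemma \ref{Lemma5} together with the frequency-decay kernel $(1+\max(t_k,t_i)^{-1}|\xi_k-\xi_i|_{\ca})^{-\nu/r-\delta}$. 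Summing $\ell$ over $\mathbb{Z}$ then gives the desired bound.

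The main obstacle, and the genuine difference from the inhomogeneous argument in \cite[Proposition 3.4]{NR12}, is the tail of the sum as $t_i \to 0^+$: in the homogeneous setting $\{t_i\}_{i\in J}$ is not bounded below, so this contribution must be summed \emph{explicitly}. The asymmetric min-structure in the definition of $c^\delta_{\cdot\cdot}$ is engineered precisely for this step, for when $t_i \ll \min(t_j,t_k)$ both $c_{ki}^\delta$ and $c_{ij}^\delta$ pick up a factor of the form $t_i^{\nu/r+\delta}$ (the ``small-scale'' branch of the $\min$), which furnishes just enough decay to render the $t_i \to 0$ tail of the series convergent. Once this tail is handled and combined with the dyadic book-keeping above, collecting the constants yields the claimed almost-diagonal bound with the same parameters (up to a smaller but still positive $\delta$), establishing $\text{ad}_{p,q}^s(\tilde{h}) \circ \text{ad}_{p,q}^s(\tilde{h}) \subseteq \text{ad}_{p,q}^s(\tilde{h})$.
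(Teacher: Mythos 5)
Your proposal follows essentially the same route as the paper's proof in the Appendix: Lemma \ref{poison} there performs exactly your two nested summations --- the inner lattice sum over $l$ via the spatial convolution estimate of Lemma \ref{life1} and the outer sum over $i$ via the frequency estimate of Lemma \ref{life2} (which rests on the covering/volume-count Lemma \ref{pinball}) --- splits the scale sum into the same three regimes, and uses the asymmetric $\min$ in $c_{jk}^{\delta}$ to make the $t_i\to 0$ tail summable. The only cosmetic discrepancies are that in the paper the $\delta\to\delta/2$ loss is booked in the frequency factor $c_{jk}^{\delta/2}$ and the scale-ratio factor rather than in the spatial kernel (Lemma \ref{life1} keeps the full exponent), and that as $t_i\to 0$ only one of the two factors $c_{ji}^{\delta}, c_{ik}^{\delta}$ carries the strong exponent $\tfrac{\nu}{r}+\delta$ while the other carries only $\delta$ --- which is still enough against the $t_i^{-\nu}$ growth of the index count.
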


\subsection{Almost diagonal change of frame matrices}
Our goal in this section is to estimate the entries in the  change of frame matrix associated with two suitably localised frames. We are following a gradual approach where we slowly remove restrictions to arrive at our main result, Theorem \ref{thm:almostdiagonal}.
 
 We first consider the frame $\{\eta_{j,n}\}_{j\in J,n\in\Z^d}$ defined in \eqref{eq:eta} together with another band-limited system $\{\psi_{k,m}\}_{j\in J,n\in\Z^d}$ satisfying  similar localisation conditions.	 

Our first goal is to prove Proposition \ref{prop:almostdiag}, which states that the "change of frame coefficient"-matrix $$\{\ip{\eta_{j,n},\psi_{k,m}}\}_{j,k,n,m}$$ is almost diagonal, where the functions $\{\psi_{k,m}\}_{k,m}$ are assumed to satisfy condition \eqref{eq:psidecay}-\eqref{eq:supppsi} below; in particular, they are decreasing functions in direct and frequency space and have compactly supported Fourier transform. Due to complications arising from the homogeneous structure of the spaces considered, the proof of  Proposition \ref{prop:almostdiag} will be somewhat technical and it relies on a number of Lemmas covering various spacial cases.  

It is interesting to note that for  {\em in}homogeneous spaces, the corresponding result is much more straightforward to prove, see \cite[Lemma 3.1]{NR12}. This is, to a large degree, due to the fact that the dilation parameters $\{t_j\}_j$ are bounded away from zero.  

 We begin our analysis with the following straightforward result that provides information on the decay properties of $\eta_{j,n}$ in both direct and frequency space.

\begin{mylemma}\label{prop:decayeta}
Choose $N,M$ such that $2N>\nu$ and $2M>\nu$. Let $t_j$ be given as in \eqref{eq:tj} and $A_j$ be given as in \eqref{eq:matrixA}. Then
	\begin{align}
	|\eta_{j,n}(x)| &\leq C_N t_j^{\frac{\nu}{2}}(1+t_j|x-x_{j,n}|_{\ca})^{-2N} \label{eq:etadecay} \\
	|\hat{\eta}_{j,n}(\xi)| &\leq C_Mt_j^{-\frac{\nu}{2}}(1+ t_j^{-1}|\xi-\xi_j|_{\ca})^{-2M} \label{eq:hatetadecay}
	\end{align}
	where $x_{j,n}$ is given by \eqref{eq:x}.
\end{mylemma}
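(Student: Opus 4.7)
The plan is to read off both estimates directly from the explicit form of $\eta_{j,n}$ in direct space, together with the decay estimate \eqref{eq:estimatepartialmu} for $\mu_j$ and the compact-support property of $\phi_j$ in frequency. The role of the hypotheses $2N>\nu$, $2M>\nu$ is cosmetic for this lemma (they will be needed when the resulting bounds are integrated); we just carry $2N$ and $2M$ as the target exponents.

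For the direct-space estimate, I start from the representation
\begin{equation*}
\eta_{j,n}(x)=(2a)^{-d/2}\,t_j^{\nu/2}\,e^{ix\cdot\xi_j}\,\mu_j\!\left(A_jx-\tfrac{\pi}{a}n\right),
\end{equation*}
so $|\eta_{j,n}(x)| = (2a)^{-d/2} t_j^{\nu/2} |\mu_j(A_j x-\tfrac{\pi}{a}n)|$. Applying \eqref{eq:estimatepartialmu} with $\gamma=0$ and $|\beta|\le 2N$ yields $|\mu_j(y)|\le C_N(1+|y|_{\ca})^{-2N}$. Writing $A_jx-\tfrac{\pi}{a}n=A_j(x-x_{j,n})$ with $x_{j,n}$ as in \eqref{eq:x}, and using the homogeneity property \eqref{eq:conditionaninorm} of $|\cdot|_{\ca}$ together with $A_j=D_{\ca}(t_j)$, gives $|A_j(x-x_{j,n})|_{\ca}=t_j|x-x_{j,n}|_{\ca}$. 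This yields \eqref{eq:etadecay}.

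For the frequency-space estimate, I use $\hat{\eta}_{j,n}=\phi_j e_{j,n}$ with
\begin{equation*}
|e_{j,n}(\xi)|=(2a)^{-d/2}\,t_j^{-\nu/2}\,\chi_{K_a}(T_j^{-1}\xi),
\end{equation*}
so $|\hat{\eta}_{j,n}(\xi)|\le(2a)^{-d/2}\,t_j^{-\nu/2}\,\|\phi_j\|_\infty\,\chi_{\mathrm{supp}(\phi_j)}(\xi)$. The BAPU bound uniform in $j$ controls $\|\phi_j\|_\infty$, and by construction $\mathrm{supp}(\phi_j)\subseteq T_jB_{\ca}(0,2\delta)=B_{\ca}(\xi_j,2\delta t_j)$, using $A_j=D_{\ca}(t_j)$ and \eqref{eq:conditionaninorm} once more. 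Hence on $\mathrm{supp}(\phi_j)$ one has $t_j^{-1}|\xi-\xi_j|_{\ca}<2\delta$, so $(1+t_j^{-1}|\xi-\xi_j|_{\ca})^{2M}\le(1+2\delta)^{2M}$, which converts the characteristic function of the support into the desired polynomial decay factor, giving \eqref{eq:hatetadecay}.

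The only mildly delicate step is the identity $|A_j z|_{\ca}=t_j|z|_{\ca}$, which must be justified from $A_j=D_{\ca}(\tilde h(\xi_j))$ and $t_j=\tilde h(\xi_j)$ via property \eqref{eq:conditionaninorm}; everything else is a direct substitution. There is no genuine obstacle here: the lemma is a bookkeeping statement about the decay of the building blocks $\eta_{j,n}$ that will feed into the harder almost-diagonality estimates later in the section.
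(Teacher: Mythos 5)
Your proposal is correct and follows essentially the same route as the paper: the direct-space bound comes from \eqref{eq:estimatepartialmu} with $\gamma=0$ plus the homogeneity $|A_j z|_{\ca}=t_j|z|_{\ca}$, and the frequency-space bound comes from the uniform boundedness and compact support of $\phi_j$ (the paper phrases this via $\hat{\mu}_j(T_j^{-1}\xi)$ and \eqref{eq:decayofpartialmu}, which is the same mechanism of trading a bounded, compactly supported function for arbitrary polynomial decay). No gaps.
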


\begin{proof}
	We begin with the estimate for $\eta_{j,n}(x)$. Using \eqref{eq:estimatepartialmu} with $\gamma=0$ and \eqref{eq:conditionaninorm} we have
	\begin{align*}
	|\eta_{j,n}(x)| &= \left| (2a)^{-\frac{d}{2}} t_j^{\frac{\nu}{2}} \e^{ix\cdot \xi_j} \mu_j(A_j x-\frac{\pi}{a}n) \right| 
	\leq (2a)^{-\frac{d}{2}} t_j^{\frac{\nu}{2}} \left|\mu_j(A_j x-\frac{\pi}{a}n)\right| \\
	&\leq C t_j^{\nu/2} (1+|A_j x-\tfrac{\pi}{a}n|_{\ca})^{-2N} \\
	&\leq C t_j^{\nu/2} (1+t_j|x-x_{j,n}|_{\ca})^{-2N},
	\end{align*}
	where $x_{j,n}$ is given in \eqref{eq:x}. 
	For the next estimate we use \eqref{eq:conditionaninorm} and \eqref{eq:decayofpartialmu} and get
	\begin{align*}
	|\hat{\eta}_{j,n}(\xi)| &= |\phi_j(\xi)e_{j,n}(\xi)| \leq Ct_j^{-\frac{\nu}{2}} \left| \phi_j(\xi)\e^{-i\frac{\pi}{a}n\cdot T_j^{-1}\xi}\right| 
	\leq C t_j^{-\frac{\nu}{2}} \left| \hat{\mu}_j(T_j^{-1}\xi) \right| \\
	&\leq C t_j^{-\frac{\nu}{2}} (1+|T_j^{-1}\xi|_{\ca})^{-2M}
	= C t_j^{-\frac{\nu}{2}} (1+|A_j^{-1}(\xi-\xi_j)|_{\ca})^{-2M} \\
	&= C t_j^{-\frac{\nu}{2}} (1+ t_j^{-1}|\xi-\xi_j|_{\ca})^{-2M}.
	\end{align*}
\end{proof}
We now turn to the actual estimation of the "change of frame"-matrix in various settings, leading to our main result, Theorem \ref{thm:almostdiagonal}. Our first main result will be Proposition \ref{prop:almostdiag} that considers systems with the following band-limited structure.  Let $\{\psi_{k,m}\}_{k\in J,m\in\Z^d} \subset L_2(\R^d)$ be an arbitrary sequence of functions with similar decay properties as our original frame $\{\eta_{j,n}\}_{j\in J,n\in\Z^d}$, and assume  that the functions $\{\psi_{k,m}\}_{k,m}$ are band-limited and compatible with the decomposition of the frequency space. That is,
\begin{align}
|\psi_{k,m}(x)| &\leq Ct_k^{\frac{\nu}{2}}(1+ t_k|x-x_{k,m}|_{\ca})^{-2N'} \label{eq:psidecay} \\
|\hat{\psi}_{k,m}(\xi)| &\leq Ct_k^{-\frac{\nu}{2}}(1+ t_k^{-1}|\xi-\xi_k|_{\ca})^{-2M'}, \quad \text{and} \label{eq:hatpsidecay} \\
\text{supp}(\hat{\psi}_{k,m}) &\subseteq Q_k, \label{eq:supppsi}
\end{align}
where the constant $C$ is independent of $k$ and $m$. 
With these assumptions we focus on estimating  $|\ip{\eta_{j,n},\psi_{k,m}}|$. Let us first make the observation that the associated functions $$\upsilon_{k,m}:=(2a)^{d/2}t_k^{-\nu/2}e^{-iA_k^{-1}(\cdot+\frac{\pi}{a}m )\cdot\xi_k}\psi_{k,m}\big(A_k^{-1}\big(\cdot+\frac{\pi}{a}m\big)\big)$$
satisfy, using  \eqref{eq:psidecay},
$$|\upsilon_{k,m}(x)| \leq C(1+ |x|_{\ca})^{-2N'}, $$
with $C$ independent of $k$ and $m$, while
\begin{equation}\label{eq:structure}
\psi_{k,m}(x)=(2a)^{-\frac{d}{2}} t_k^{\frac{\nu}{2}} \e^{ix\cdot \xi_k} \upsilon_{k,m}(A_k x-\tfrac{\pi}{a}m).
\end{equation} 

 To prove Proposition \ref{prop:almostdiag}, we need to consider a number of lemmas.   

\begin{mylemma}\label{lemma:1}
Choose $N>\nu$ and suppose $\{\eta_{j,n}\}_{j\in J,n\in\Z^d}$ satisfies \eqref{eq:etadecay}, and $\{\psi_{k,m}\}_{k\in J,m\in\Z^d}$ satisfies \eqref{eq:psidecay} with $N'\geq N$. Then
	\begin{equation}\label{eq:estimate1}
	\left|\ip{\eta_{j,n},\psi_{k,m}} \right| \leq C \min\left(\frac{t_j}{t_k},\frac{t_k}{t_j} \right)^{\frac{\nu}{2}} (1+\min(t_j,t_k)|x_{j,n}-x_{k,m}|_{\ca})^{-N},
	\end{equation}
	where $t_j$ is defined in \eqref{eq:tj} and $x_{j,n}$ in \eqref{eq:x}.
\end{mylemma}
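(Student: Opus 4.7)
The plan is to bound $|\langle \eta_{j,n},\psi_{k,m}\rangle|$ by putting the absolute value inside the integral and applying the pointwise decay bounds from \eqref{eq:etadecay} and \eqref{eq:psidecay}. By the symmetry of both the target estimate and the problem (interchanging the roles of $(j,n)$ and $(k,m)$), it suffices to treat the case $t_j\le t_k$; then $\min(t_j/t_k,t_k/t_j)^{\nu/2}=(t_j/t_k)^{\nu/2}$ and $\min(t_j,t_k)=t_j$.

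The starting point is
\[
|\langle \eta_{j,n},\psi_{k,m}\rangle|
\le C\, t_j^{\nu/2}\,t_k^{\nu/2}\int_{\R^d}(1+t_j|x-x_{j,n}|_{\ca})^{-2N}(1+t_k|x-x_{k,m}|_{\ca})^{-2N'}\,dx.
\]
Split each factor into two equal powers. For the $\psi$-factor, since $N'\ge N$ and $t_k\ge t_j$, I get the pointwise bound
\[
(1+t_k|x-x_{k,m}|_{\ca})^{-N'}\le (1+t_j|x-x_{k,m}|_{\ca})^{-N}.
\]
This converts one of the $\psi$-decay factors into the same scale $t_j$ used by the $\eta$-decay.

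Next comes the key geometric step. By the quasi-triangle inequality for $|\cdot|_{\ca}$,
\[
1+t_j|x_{j,n}-x_{k,m}|_{\ca}\le C\bigl(1+t_j|x-x_{j,n}|_{\ca}\bigr)\bigl(1+t_j|x-x_{k,m}|_{\ca}\bigr);
\]
therefore, using that the product of two numbers in $(0,1]$ is bounded above by their minimum and splitting into the two cases of which distance exceeds $|x_{j,n}-x_{k,m}|_{\ca}/(2K)$, we obtain the pointwise inequality
\[
(1+t_j|x-x_{j,n}|_{\ca})^{-N}(1+t_j|x-x_{k,m}|_{\ca})^{-N}\le C(1+t_j|x_{j,n}-x_{k,m}|_{\ca})^{-N}.
\]
This separation factor, being $x$-independent, can be pulled out of the integral.

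What remains inside the integral is $(1+t_j|x-x_{j,n}|_{\ca})^{-N}(1+t_k|x-x_{k,m}|_{\ca})^{-N'}$, which I bound above by the second factor alone. The anisotropic change of variables $y=D_{\ca}(t_k)(x-x_{k,m})$ has Jacobian $t_k^{\nu}$ and transforms the anisotropic norm via \eqref{eq:conditionaninorm}, giving
\[
\int_{\R^d}(1+t_k|x-x_{k,m}|_{\ca})^{-N'}\,dx=t_k^{-\nu}\int_{\R^d}(1+|y|_{\ca})^{-N'}\,dy\le C\, t_k^{-\nu},
\]
where the last integral converges because $N'\ge N>\nu$. Combining the prefactors yields $t_j^{\nu/2}\,t_k^{\nu/2}\,t_k^{-\nu}=(t_j/t_k)^{\nu/2}$, which matches the claimed bound. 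The symmetric case $t_k\le t_j$ is handled by swapping the roles of the two functions. The only mildly delicate point is the case-split in the quasi-triangle step; everything else is a routine assembly of decay estimates and an anisotropic change of variables.
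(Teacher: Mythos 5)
Your proof is correct and follows essentially the same route as the paper's: both arguments extract the $x$-independent separation factor $(1+t_j|x_{j,n}-x_{k,m}|_{\ca})^{-N}$ via the quasi-triangle inequality for $|\cdot|_{\ca}$ and then integrate the remaining decay with an anisotropic change of variables to produce $t_k^{-\nu}$, yielding $(t_j/t_k)^{\nu/2}$. The only difference is bookkeeping: you split the full exponents $2N$ and $2N'$ in half and argue pointwise via submultiplicativity, whereas the paper uses only exponent $N$ from each hypothesis and organizes the same estimate as a case analysis on the size of $t_j|x_{j,n}-x_{k,m}|_{\ca}$ and the location of $x$.
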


\begin{proof}
	Without loss of generality assume that $t_j \leq t_k$. We consider two cases. 

Case 1: Suppose $t_j|x_{j,n}-x_{k,m}|_{\ca} \leq 1$. Since $N>\nu$, it follows that
		\begin{equation}\label{eq:tjestimates}
		\frac{t_j^{\frac{\nu}{2}}}{(1+ t_j|x-x_{j,n}|_{\ca})^N} \leq t_j^{\frac{\nu}{2}} \leq \frac{2^Nt_j^{\frac{\nu}{2}}}{(1+ t_j|x_{j,n}-x_{k,m}|_{\ca})^N}.
		\end{equation}
		Using \eqref{eq:tjestimates}, the decay properties of $\eta_{j,n}(x)$, and $\psi_{k,m}(x)$ and by a change of variable, we obtain
		\begin{align}\label{eq:case1estimates}
		\left|\ip{\eta_{j,n},\psi_{k,m}}\right| &\leq \int_{\mathbb{R}^d} \frac{Ct_j^{\frac{\nu}{2}}}{(1+ t_j|x-x_{j,n}|_{\ca})^N} \frac{t_k^{\frac{\nu}{2}}}{(1+ t_k|x-x_{k,m}|_{\ca})^N} \d x \nonumber \\
		&\leq \frac{Ct_j^{\frac{\nu}{2}}}{(1+t_j|x_{j,n}-x_{k,m}|_{\ca})^N} \int_{\mathbb{R}^d} \frac{t_k^{\frac{\nu}{2}}}{(1+ t_k|x-x_{k,m}|_{\ca})^N} \d x \nonumber \\
		&= \frac{Ct_j^{\frac{\nu}{2}}}{(1+ t_j|x_{j,n}-x_{k,m}|_{\ca})^N} \int_{\mathbb{R}^d} \frac{t_k^{-\frac{\nu}{2}}}{(1+|u|_{\ca})^N} \d u \nonumber \\
		&\leq C\left(\frac{t_j}{t_k} \right)^{\frac{\nu}{2}} (1+ t_j|x_{j,n}-x_{k,m}|_{\ca})^{-N}.
		\end{align}
		
Case 2:
		Now suppose $t_j|x_{j,n}-x_{k,m}|_{\ca} > 1$, and assume first that $|x-x_{j,n}|_{\ca} \geq \frac{1}{2K} |x_{j,n}-x_{k,m}|_{\ca}$, with $K$ given in Definition \ref{def:aninorm}. Similar to above we then get \eqref{eq:tjestimates} which leads to \eqref{eq:case1estimates}. 
		Now, assume $|x-x_{j,n}|_{\ca} < \frac{1}{2K} |x_{j,n}-x_{k,m}|_{\ca}$. Then it follows that $\frac{1}{2K} |x_{j,n}-x_{k,m}|_{\ca} < |x-x_{k,m}|_{\ca}$.
Thus we have
		\begin{align*}
		\frac{1}{(1+t_k|x-x_{k,m}|_{\ca})^N} &\leq \frac{C}{(1+t_k|x_{j,n}-x_{k,m}|_{\ca})^N} = \frac{C(t_j/t_k)^N}{\left( (t_j/t_k)+t_j|x_{j,n}-x_{k,m}|_{\ca} \right)^N} \\
		&\leq \frac{C(t_j/t_k)^N}{(t_j|x_{j,n}-x_{k,m}|_{\ca})^N} 
		\leq  \frac{2^NC(t_j/t_k)^N}{(2t_j|x_{j,n}-x_{k,m}|_{\ca})^N} \\
		&\leq \frac{C(t_j/t_k)^N}{\left( 1+t_j|x_{j,n}-x_{k,m}|_{\ca} \right)^N}.
		\end{align*}
		Since, by assumption, $t_j/t_k\leq 1$, we now use that $N>\nu$ to obtain
		\begin{align*}
		\left|\ip{\eta_{j,n},\psi_{k,m}}\right| &\leq \int_{\mathbb{R}^d} \frac{Ct_j^{\frac{\nu}{2}}}{(1+ t_j|x-x_{j,n}|_{\ca})^N} \frac{t_k^{\frac{\nu}{2}}}{(1+ t_k|x-x_{k,m}|_{\ca})^N} \d x \\
		&\leq \int_{\mathbb{R}^d} \frac{Ct_j^{\frac{\nu}{2}}}{(1+ t_j|x-x_{j,n}|_{\ca})^N} \frac{C t_k^{\frac{\nu}{2}}(t_j/t_k)^N}{(1+t_j|x_{j,n}-x_{k,m}|_{\ca})^N} \d x \\
		&\leq \int_{\mathbb{R}^d} \frac{C t_j^{\frac{\nu}{2}}}{(1+ t_j|x-x_{j,n}|_{\ca})^N} \frac{t_k^{\frac{\nu}{2}} (t_j/t_k)^{\frac{\nu}{2}}(t_j/t_k)^{\frac{\nu}{2}}}{(1+t_j|x_{j,n}-x_{k,m}|_{\ca})^N} \d x \\
		&= \frac{C(t_j/t_k)^{\frac{\nu}{2}}}{(1+t_j|x_{j,n}-x_{k,m}|_{\ca})^N} \int_{\mathbb{R}^d} \frac{t_j^\nu}{(1+ t_j|x-x_{j,n}|_{\ca})^N} \d x \\
		&\leq C \left(\frac{t_j}{t_k}\right)^{\frac{\nu}{2}} (1+t_j|x_{j,n}-x_{k,m}|_{\ca})^{-N}.
		\end{align*}

	Thus the required estimate follows.
\end{proof}

We will use Lemma \ref{lemma:1} to prove Proposition \ref{prop:almostdiag}. However, we also need a stronger estimate in the case where $\min(t_j,t_k) <1$. This will be addressed in Lemma \ref{lemma:bandlimited}. The proof of Lemma \ref{lemma:bandlimited} will rely on the following lemma.

\begin{mylemma}\label{lemma:decay}
	Let $L\in \mathbb{N}$ and let $\alpha_1, \alpha_2$ be given by \eqref{eq:aniless1}. Choose $N>\nu$ and $R> 2N+ L/\alpha_1$ and assume $t_j\leq t_k$. Suppose the functions $f_j\in \mathcal{C}^{L}(\mathbb{R}^d)$ and $g_k\in L_1(\mathbb{R}^d)$ satisfy
	\begin{align}
	|\partial_x^\gamma f_j(x)| &\leq C_1^\gamma t_j^{\nu/2} t_j^{\alpha_1 L} (1+t_j|x-x_j|_{\ca})^{-N}, \quad |\gamma|=L \label{eq:partialf}. \\
	|g_k(x)| &\leq C_2 t_k^{\nu/2}(1+t_k|x-x_k|_{\ca})^{-R}. \label{eq:gdecay} \\
	\int_{\R^d} x^{\beta} g_k(x) \d x &= 0, \quad |\beta| \leq L-1. \label{eq:vanising}
	\end{align}
	Then there exists a constant $C>0$, independent of $f_j, g_k, t_j, t_k, x_j$ and $x_k$, such that
	\begin{align*}
	|\ip{f_j,g_k}| \leq C\tilde{C} \frac{t_j^{\nu/2 + \alpha_1 L}}{t_k^{\nu/2}\min(t_k^{L/\alpha_1}, t_k^{L/\alpha_2})} (1+t_j|x_j-x_k|_{\ca})^{-N},
	\end{align*}
	where $\tilde{C} = \left(\sum_{|\gamma|=L} C_1^\gamma\right) C_2$.
	
\end{mylemma}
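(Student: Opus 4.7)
The plan is a standard ``molecular'' argument: exploit the vanishing moments of $g_k$ to subtract the Taylor polynomial of $f_j$ about $x_k$, which produces a gain of $|x-x_k|^L$ that is converted via the anisotropic norm into the $\min(t_k^{L/\alpha_1},t_k^{L/\alpha_2})^{-1}$ factor. Let $P$ denote the Taylor polynomial of $f_j$ of degree $L-1$ about $x_k$. By \eqref{eq:vanising}, $\langle f_j,g_k\rangle=\langle f_j-P,g_k\rangle$, and the integral form of Taylor's remainder together with \eqref{eq:partialf} and \eqref{eq:gdecay} bounds $|\langle f_j,g_k\rangle|$ by $C\tilde{C}\,t_j^{\nu/2+\alpha_1L}\,t_k^{\nu/2}\,I$, where
\[
I:=\int_0^1\!\!\int_{\R^d}|x-x_k|^L\,(1+t_j|y_s-x_j|_\ca)^{-N}\,(1+t_k|x-x_k|_\ca)^{-R}\,dx\,ds,\quad y_s:=x_k+s(x-x_k).
\]

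To decouple the argument $y_s$ from $x$ in the first inner factor I would prove a Peetre-type inequality. Using the quasi-triangle inequality together with the bound $|s(x-x_k)|_\ca\leq|x-x_k|_\ca$ for $s\in[0,1]$ (which follows directly from the defining equation of $|\cdot|_\ca$), one obtains $1+t_j|x_j-x_k|_\ca\leq C(1+t_j|y_s-x_j|_\ca)(1+t_j|x-x_k|_\ca)$ and hence
\[
(1+t_j|y_s-x_j|_\ca)^{-N}\leq C(1+t_j|x_j-x_k|_\ca)^{-N}(1+t_j|x-x_k|_\ca)^{N}.
\]
Since $t_j\leq t_k$, $(1+t_j|x-x_k|_\ca)^N\leq(1+t_k|x-x_k|_\ca)^N$, which can be absorbed into $(1+t_k|x-x_k|_\ca)^{-R}$, leaving $(1+t_k|x-x_k|_\ca)^{-(R-N)}$; the decay factor $(1+t_j|x_j-x_k|_\ca)^{-N}$ then pulls out of the integral.

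Finally, property (4) of the anisotropic norm yields the Euclidean-to-anisotropic conversion
\[
|x-x_k|^L\leq C\bigl(|x-x_k|_\ca^{L/\alpha_1}+|x-x_k|_\ca^{L/\alpha_2}\bigr),
\]
with the two exponents arising from the regimes $|x-x_k|_\ca\geq 1$ and $|x-x_k|_\ca<1$. After the anisotropic change of variables $u=D_\ca(t_k)(x-x_k)$ (so that $|x-x_k|_\ca=t_k^{-1}|u|_\ca$ and $dx=t_k^{-\nu}\,du$), $I$ is controlled by $(1+t_j|x_j-x_k|_\ca)^{-N}$ times
\[
\sum_{i=1,2}t_k^{-\nu-L/\alpha_i}\int_{\R^d}|u|_\ca^{L/\alpha_i}\,(1+|u|_\ca)^{-(R-N)}\,du.
\]
These integrals are finite by the hypothesis $R>2N+L/\alpha_1\geq N+\nu+L/\alpha_i$ (using $N>\nu$ and $\alpha_1\leq\alpha_2$), and summing in $i$ produces $t_k^{-\nu}\min(t_k^{L/\alpha_1},t_k^{L/\alpha_2})^{-1}$. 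Re-inserting the prefactors delivers the claimed bound. The main obstacle is precisely this Euclidean-to-anisotropic conversion: the two distinct exponents $L/\alpha_1$ and $L/\alpha_2$ dictated by property (4) are what force the asymmetric $\min$ factor in the conclusion, and one must verify that the integrability threshold $R>N+\nu+L/\alpha_1$ is strong enough to handle both terms simultaneously.
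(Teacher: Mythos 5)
Your proof is correct and follows essentially the same route as the paper: vanishing moments to subtract the Taylor polynomial of $f_j$ at $x_k$, a Peetre-type decoupling of the segment point from $x$ (the paper's estimate \eqref{eq:estimatemedy}, obtained there via the Lagrange form of the remainder), the conversion $|x-x_k|^L\lesssim |x-x_k|_{\ca}^{L/\alpha_1}+|x-x_k|_{\ca}^{L/\alpha_2}$, and the anisotropic change of variables with integrability guaranteed by $R>2N+L/\alpha_1$ and $N>\nu$. The only deviations (integral versus Lagrange form of the Taylor remainder, and a sum instead of a maximum of the two anisotropic powers) are cosmetic.
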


\begin{proof}
	By the vanishing moment condition \eqref{eq:vanising}, we have
	\begin{align*}
	\left|\int_{\mathbb{R}^d} f_j(x) g_k(x) \d x \right| \leq \int_{\mathbb{R}^d} \left|f_j(x) - \sum_{|\gamma|\leq L-1} \frac{\partial^\gamma f_j(x_k)}{\gamma!} (x-x_k)^\gamma\right| |g_k(x)| \d x
	\end{align*}
	Using the Taylor Remainder Theorem, and \eqref{eq:partialf} with $|\gamma|=L$ together with \eqref{eq:gdecay}, we get
	\begin{align*}
	& \left|\int_{\mathbb{R}^d} f_j(x) g_k(x) \d x \right| \nonumber \\
	&\leq C \int_{\mathbb{R}^d} |x-x_k|^L  |\partial^\gamma_y f_j(y)| |g_k(x)|\d x \nonumber \\
	&\leq C \int_{\mathbb{R}^d} \max\{|x-x_k|_{\ca}^{1/\alpha_1}, |x-x_k|_{\ca}^{1/\alpha_2}\}^L |\partial^{\gamma}_y f_j(y)| |g_k(x)|\d x \nonumber \\
	&\leq C \tilde{C} \int_{\mathbb{R}^d} 
	\frac{\max\{|x-x_k|_{\ca}^{L/\alpha_1}, |x-x_k|_{\ca}^{L/\alpha_2}\} \; t_j^{\nu/2} t_j^{\alpha_1 L}}{(1+t_j|y-x_j|_{\vec{a}})^{N}}
	\frac{t_k^{\nu/2}}{(1+t_k|x-x_k|_{\ca})^{R}} \; \d x,
	\end{align*}
	for some $y$ on the line segment joining $x_k$ and $x$. Using $t_j\leq t_k$, and the quasi-triangle inequality, we have
	\begin{align}\label{eq:estimatemedy}
	\frac{1}{K}\frac{1}{1+t_j|y-x_j|_{\ca}} \leq K 
	\frac{1+t_k|x-x_k|_{\ca}}{1+t_j|x_j-x_k|_{\ca}}.
	\end{align}  
	Inserting this estimate in the last integral, and by a change of variable, we obtain
	\begin{align*}
	&\left|\int_{\mathbb{R}^d} f_j(x) g_k(x) \d x \right| \nonumber \\
	&\leq C \tilde{C} \int_{\mathbb{R}^d} \frac{\max\{|x-x_k|_{\ca}^{L/\alpha_1}, |x-x_k|_{\ca}^{L/\alpha_2}\} \; t_j^{\nu/2+\alpha_1 L}}{(1+t_j|x_j-x_k|_{\ca})^N} \frac{t_k^{\nu/2}}{(1+t_k|x-x_k|_{\ca})^{R-N}} \; \d x \\
	&\leq C\tilde{C} \frac{t_j^{\nu/2+\alpha_1 L}}{(1+t_j|x_j-x_k|_{\ca})^N} \int_{\mathbb{R}^d} \frac{\max\{|x-x_k|_{\ca}^{L/\alpha_1}, |x-x_k|_{\ca}^{L/\alpha_2}\}
		t_k^{\nu/2}}{(1+t_k|x-x_k|_{\ca})^{R-N}} \; \d x \\
	&\leq C\tilde{C} \frac{t_j^{\nu/2+\alpha_1 L}}{(1+t_j|x_j-x_k|_{\ca})^N} \int_{\mathbb{R}^d} \frac{\max\{(t_k^{-1}|u|_{\ca})^{L/\alpha_1},(t_k^{-1}|u|_{\ca})^{L/\alpha_2}\} \; t_k^{-\nu/2}}{(1+|u|_{\ca})^{R-N}} \d u \\
	&\leq C\tilde{C} \frac{t_j^{\nu/2+\alpha_1L}}{t_k^{\nu/2}\min(t_k^{L/\alpha_1}, t_k^{L/\alpha_2})} (1+t_j|x_j-x_k|_{\ca})^{-N},
	\end{align*}
	where the last inequality follows since $R > 2N + L/\alpha_1$.
\end{proof}

We are now ready to prove the following.
\begin{mylemma}\label{lemma:bandlimited}
Let $L\in \mathbb{N}$ and choose $M,N>\nu$. Let $\{\eta_{j,n}\}_{j\in J,n\in\Z^d}$ be the frame defined in \eqref{eq:eta} satisfying \eqref{eq:etadecay}, and let $\{\psi_{k,m}\}_{k\in J,m\in\Z^d}$ satisfy \eqref{eq:psidecay}, \eqref{eq:hatpsidecay}, and \eqref{eq:supppsi} with $N'>2N+L/\alpha_1$ and $M'\geq M$. Assume $\min(t_j,t_k) < 1$. Then there exists a constant $C_L>0$ such that
	\begin{align*}
	|\ip{\eta_{j,n},\psi_{k,m}}| \leq C_L \min\left(\frac{t_j}{t_k}, \frac{t_k}{t_j}\right)^{\nu/2+L/\alpha_1} (1+\min(t_j,t_k)|x_{j,n}-x_{k,m}|_{\ca})^{-2N}.
	\end{align*} 
\end{mylemma}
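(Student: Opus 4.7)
By the symmetry of the target bound in $(j,n)\leftrightarrow(k,m)$, we may assume $t_j\le t_k$; the hypothesis $\min(t_j,t_k)<1$ then forces $t_j<1$. The core strategy is to invoke Lemma~\ref{lemma:decay} with $\psi_{k,m}$ playing the role of the vanishing-moment function at the larger scale $t_k$ (its vanishing moments of all orders follow from the Fourier support condition \eqref{eq:supppsi} combined with $0\notin Q_k$), paired against an appropriate derivative-controlled function built from $\eta_{j,n}$.

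A direct application with $f_j=\eta_{j,n}$ fails, since differentiating the modulation $e^{ix\cdot\xi_j}$ in \eqref{eq:eta} brings down factors $|\xi_j|^{|\sigma|}$ that are not controlled by $t_j^{\alpha_1|\sigma|}$ in general. We overcome this by demodulating: set
\[
\eta_{j,n}'(x):=e^{-ix\cdot\xi_j}\eta_{j,n}(x)=(2a)^{-d/2}t_j^{\nu/2}\mu_j(A_jx-\tfrac{\pi}{a}n),\qquad \psi_{k,m}^{(j)}(x):=e^{-ix\cdot\xi_j}\psi_{k,m}(x),
\]
so that $\ip{\eta_{j,n},\psi_{k,m}}=\ip{\eta_{j,n}',\psi_{k,m}^{(j)}}$. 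Using \eqref{eq:estimatepartialmu} together with the chain-rule factor $t_j^{a\cdot\gamma}$ from $A_j$, one checks for $|\gamma|=L$ the estimate
\[
|\partial^\gamma\eta_{j,n}'(x)|\le Ct_j^{\nu/2+\alpha_1 L}(1+t_j|x-x_{j,n}|_{\ca})^{-N},
\]
thanks to $a\cdot\gamma\ge\alpha_1 L$ and $t_j<1$, while the pointwise decay \eqref{eq:psidecay} is inherited by $\psi_{k,m}^{(j)}$. The Fourier support of $\psi_{k,m}^{(j)}$ is $Q_k-\xi_j$; since $\ip{\eta_{j,n},\psi_{k,m}}=0$ whenever $Q_j\cap Q_k=\emptyset$, we may assume $|\xi_j-\xi_k|_{\ca}\lesssim t_k$. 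In the sub-case $\xi_j\notin Q_k$ the shifted support misses the origin, so $\psi_{k,m}^{(j)}$ retains vanishing moments of all orders, and Lemma~\ref{lemma:decay} applies with $R=2M'>2N+L/\alpha_1$ to deliver
\[
|\ip{\eta_{j,n},\psi_{k,m}}|\le C\frac{t_j^{\nu/2+\alpha_1 L}}{t_k^{\nu/2}\min(t_k^{L/\alpha_1},t_k^{L/\alpha_2})}(1+t_j|x_{j,n}-x_{k,m}|_{\ca})^{-N}.
\]
The target factor $(t_j/t_k)^{\nu/2+L/\alpha_1}$ then emerges from the elementary inequality $t_j^{\alpha_1 L}\le t_j^{L/\alpha_1}$ (valid since $\alpha_1\ge 1$ and $t_j<1$) combined with a case-split on $t_k\le 1$ versus $t_k\ge 1$ to evaluate the $\min$. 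The sharper decay exponent $-2N$ in the target is gained by rerunning the argument with $2N$ in place of $N$, which is permissible because the decay of $\mu_j$ and $\psi_{k,m}$ is of arbitrarily high order under the hypotheses.

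The main obstacle is the remaining sub-case $\xi_j\in Q_k$, where $\psi_{k,m}^{(j)}$ loses its vanishing moments and the preceding scheme breaks down. This is resolved by switching the demodulation frequency: the function $\eta_{j,n}^{(k)}:=e^{-ix\cdot\xi_k}\eta_{j,n}$ has Fourier support $Q_j-\xi_k$, which avoids the origin once $|\xi_j-\xi_k|_{\ca}>2\delta t_j$, so $\eta_{j,n}^{(k)}$ itself enjoys vanishing moments of all orders, and an analogous Taylor-remainder analysis (with the derivative bound now placed on the $\psi_{k,m}$-side via the band-limitation coming from \eqref{eq:hatpsidecay}--\eqref{eq:supppsi}) yields the target. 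The innermost regime $|\xi_j-\xi_k|_{\ca}\le 2\delta t_j$, where neither demodulation produces vanishing moments, is dispatched by direct size estimates: the modulation offset $e^{ix\cdot(\xi_j-\xi_k)}$ contributes only bounded derivative factors on scale $t_j^{-1}$, and because $(1+t_k^{-1}|\xi_j-\xi_k|_{\ca})$ is pinned to a constant, the bound reduces to Lemma~\ref{lemma:1} strengthened by the improved derivative calibration. Combining the three regimes with the trivial case $Q_j\cap Q_k=\emptyset$ completes the proof.
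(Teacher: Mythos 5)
Your treatment of the sub-case $\xi_j\notin Q_k$ is essentially the paper's argument: demodulate by $e^{-ix\cdot\xi_j}$, use the chain-rule bound $|\partial^\gamma(t_j^{\nu/2}\mu_j(A_jx-\tfrac{\pi}{a}n))|\le Ct_j^{\nu/2+\alpha_1L}(1+t_j|x-x_{j,n}|_{\ca})^{-2N}$, observe that the shifted Fourier support $Q_k-\xi_j$ misses the origin so the demodulated $\psi_{k,m}$ has vanishing moments, and invoke Lemma~\ref{lemma:decay}. Two small repairs are needed there: the parameter in Lemma~\ref{lemma:decay} is the \emph{spatial} decay order, so $R=2N'$ (not $2M'$), applied with $2N$ in place of $N$, which is why $N'>2N+L/\alpha_1$ is assumed; and passing from the factor $t_j^{\nu/2+\alpha_1L}\big(t_k^{\nu/2}\min(t_k^{L/\alpha_1},t_k^{L/\alpha_2})\big)^{-1}$ to $(t_j/t_k)^{\nu/2+L/\alpha_1}$ when $t_k\ge1$ is not a consequence of $t_j<1$ alone (it requires controlling $t_k^{L/\alpha_1-L/\alpha_2}$), so here you genuinely need the comparability $t_j\asymp t_k$ that your Parseval/overlap reduction makes available — make that explicit.

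The genuine gap is your handling of the remaining case $\xi_j\in Q_k$. The paper disposes of it in one line: $\xi_j\in Q_k$ forces $|\xi_j-\xi_k|_{\ca}\lesssim\delta t_k$ with $2\delta<\delta_0$, so by $\d$-moderation of $\tilde h$ one has $t_j\asymp t_k$; then the extra factor $\min(t_j/t_k,t_k/t_j)^{L/\alpha_1}$ is bounded below by a constant and Lemma~\ref{lemma:1}, applied with decay exponent $2N$ (legitimate since $N'\ge 2N$), already gives the claim. You never invoke this moderation property, and the mechanisms you substitute do not deliver the required gain. First, the "switched demodulation" Taylor argument needs derivative bounds \emph{with spatial decay} for $\psi_{k,m}$, which \eqref{eq:psidecay}--\eqref{eq:supppsi} do not directly provide; extracting them from band-limitation requires an auxiliary convolution/Bernstein-type step (the $\rho*\upsilon_{k,m}$ device the paper only introduces later, in Lemma~\ref{lemma:enikkebandlimited}). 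Second, and more fundamentally, placing the vanishing moments on the spread-out function ($\eta_{j,n}$, scale $t_j\le t_k$) and the derivatives on the concentrated one ($\psi_{k,m}$, scale $t_k$) reverses the roles in Lemma~\ref{lemma:decay}: the Taylor remainder then produces factors of the type $t_k^{\ca\cdot\gamma}$ and $t_j^{-L/\alpha_2}$, i.e.\ \emph{positive} powers of $t_k/t_j$, so this scheme cannot by itself generate the decay $\min(t_j/t_k,t_k/t_j)^{\nu/2+L/\alpha_1}$. Third, in your innermost regime $|\xi_j-\xi_k|_{\ca}\le 2\delta t_j$ you appeal only to "Lemma~\ref{lemma:1} strengthened by the improved derivative calibration", but Lemma~\ref{lemma:1} yields only the exponent $\nu/2$, and no mechanism is given for the additional $L/\alpha_1$. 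All three difficulties disappear once you insert the missing observation that $t_j\asymp t_k$ throughout this case; as written, the argument for $\xi_j\in Q_k$ is not correct.
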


\begin{proof}
Without loss of generality assume that $t_j\leq t_k$. 
	With $\eta_{j,n}(x)$ given in \eqref{eq:eta} and $\psi_{k,m}(x)$ satisfying \eqref{eq:structure}, we have
	\begin{align*}
	|\ip{\eta_{j,n},\psi_{k,m}}| \leq C \int_{\mathbb{R}^d} |t_j^{\nu/2} e^{ix\cdot\xi_j} \mu_j(A_j x-\tfrac{\pi}{a}n) \; t_k^{\nu/2}  e^{-ix\cdot\xi_k} \upsilon_{k,m}(A_k x-\tfrac{\pi}{a}m)| \d x.
	\end{align*} 
	Let $f_{j,n}(x) = t_j^{\nu/2}\mu_j(A_j x-\tfrac{\pi}{a}n)$ and $g_{k,m}(x)=t_k^{\nu/2} e^{ix\cdot(\xi_j-\xi_k)} \upsilon_{k,m}(A_k x-\tfrac{\pi}{a}m)$. We first consider $|\partial_x^\gamma f_{j,n}(x)|$. Applying the chain rule, together with the estimate \eqref{eq:estimatepartialmu}, we find that
	\begin{align}\label{eq:afledtefjn}
	|\partial_x^\gamma f_{j,n}(x)| &\leq |t_j^{\nu/2} \partial_x^\gamma \mu_j(A_j x-\tfrac{\pi}{a}n)| \leq |t_j^{\nu/2} t_j^{\ca\cdot \gamma}  (\partial_x^\gamma\mu_j)(A_j x-\tfrac{\pi}{a}n)| \nonumber \\
	&\leq C t_j^{\nu/2} t_j^{\alpha_1 L} (1+|A_j x-\tfrac{\pi}{a}n|_{\ca})^{-2N} \nonumber \\
	&\leq C t_j^{\nu/2} t_j^{\alpha_1L} (1+t_j|x-x_{j,n}|_{\ca})^{-2N}, \quad |\gamma|=L,
	\end{align}
	with $x_{j,n}$ defined in \eqref{eq:x}.  
	Now, by definition of $g_{k,m}(x)$, and since $\hat{\psi}_{k,m}(\xi)$ has support in $Q_k$, it follows that $\text{supp}(\hat{g}) \subseteq Q_k - \xi_j $. We consider two cases.
	
	Case 1: $\{0\} \in Q_k-\xi_j$. Then $\xi_j\in Q_k$ and $Q_j \cap Q_k \ne \emptyset$. By the moderation of $\tilde{h}$ we have $t_j\asymp t_k$, thus $\left(\frac{t_j}{t_k} \right) \asymp 1$. Using this together with the estimate \eqref{eq:estimate1} from Lemma \ref{lemma:1}, we multiply by a factor of $1$, and use that $\big(\frac{t_k}{t_j}\big)^M \leq C^M$ for any $M\in\mathbb{N}$ for some $C:=C(M)>0$, to obtain
		\begin{equation*}
		\left|\ip{\eta_{j,n},\psi_{k,m}} \right| \leq C \left(\frac{t_j}{t_k}\right)^{\frac{\nu}{2}+M} (1+t_j|x_{j,n}-x_{k,m}|_{\ca})^{-2N}.
		\end{equation*} 
		Choosing $M=L/\alpha_1$ gives the required estimate.
		
		Case 2: $\{0\} \not\in Q_k-\xi_j$. Here $Q_j \cap Q_k = \emptyset$. Then $g_{k,m}(x)$ satisfies the vanishing moment condition \eqref{eq:vanising}. Moreover, by the decay properties of $g_{k,m}(x)$ and \eqref{eq:afledtefjn} we may use Lemma \ref{lemma:decay} to conclude that
		\begin{align*}
		|\ip{\eta_{j,n},\psi_{k,m}}| \leq C \left(\frac{t_j}{t_k}\right)^{\nu/2+L/\alpha_1} (1+t_j|x_{j,n}-x_{k,m}|_{\ca})^{-2N}.
		\end{align*}
		
\end{proof}

In order to prove of Proposition \ref{prop:almostdiag}, 
we need to add one further restriction on the hybrid regulation function $\tilde{h}$ from Definition \ref{def:hfunction}. From now on we assume that the function $h_2(\xi)$ in \eqref{eq:hybrid} satisfies the following:
\begin{equation}\label{eq:ekstrabetingelse}
\begin{cases}
\text{There exists} \; \beta, R_1, \rho_1 > 0 \; \text{such that} \; h_2^{1+\beta} \; \text{is d-moderate and} \\
|\xi-\zeta|_{\ca} \leq ah_2(\xi) \; \text{for} \; a\geq \rho_1 \; \text{implies} \; h_2(\zeta) \leq R_1 a h_2(\xi).
\end{cases}
\end{equation}

\begin{myre}
The added restriction on $h_2$ is not very prohibitive as we can generate a multitude of  such functions by using $s:\R_+ \rightarrow \R_+$ satisfying $s(2b) \leq Cs(b), b \in \R_+$, and
$$(1+b)^\gamma \leq s(b) \leq (1+b)^{\frac{1}{1+\beta}}$$
for some $\beta,\gamma  > 0$. We assign $h_2 = s(| \cdot |_\mathbf{a})$ and use that $s$ is weakly sub-additive to verify \eqref{eq:ekstrabetingelse}. For instance, any regulation function from Example \ref{eks:hrf} will work provided $\alpha<1$.
\end{myre}

We are now ready to prove the following result.

\begin{myprop}\label{prop:almostdiag}
Let $L >0$  and choose $N,M$ such that $2N>\nu$ and $2M >\nu$. 
Let $\{\eta_{j,n}\}_{j\in J,n\in\Z^d}$ be the frame defined in \eqref{eq:eta}  and suppose $\{\psi_{k,m}\}_{k\in J,m\in Z^d}$ satisfies  \eqref{eq:psidecay}, \eqref{eq:hatpsidecay}, and \eqref{eq:supppsi}  with $N'>2N+L/\alpha_1$ and $M'> M+\frac{L}{\beta\alpha_1}$. Then there exists a constant $C:=C({L})>0$ such that
	\begin{align*}
	\left|\ip{\eta_{j,n},\psi_{k,m}} \right| &\leq C\min\left(\frac{t_j}{t_k},\frac{t_k}{t_j} \right)^{\frac{\nu}{2}+\frac{L}{\alpha_1}} (1+\max(t_j,t_k)^{-1}|\xi_j-\xi_k|_{\ca})^{-M} \nonumber \\
	&\times (1+\min(t_j,t_k)|x_{j,n}-x_{k,m}|_{\ca})^{-N}.
	\end{align*}
\end{myprop}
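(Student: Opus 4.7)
The plan is to exploit the compact Fourier support hypothesis \eqref{eq:supppsi} to set up a dichotomy. By construction \eqref{eq:eta} and Definition \ref{def:bapu}, $\operatorname{supp}(\hat\eta_{j,n}) \subseteq B_{\ca}(\xi_j, 2\delta t_j)$, while \eqref{eq:supppsi} places $\operatorname{supp}(\hat\psi_{k,m}) \subseteq Q_k = B_{\ca}(\xi_k, \delta t_k)$. If these two $\d$-balls are disjoint, Parseval's identity forces $\langle \eta_{j,n},\psi_{k,m}\rangle = 0$ and the claimed bound holds trivially with any constant. So it remains to treat the overlap case.

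In the overlap case the quasi-triangle inequality (property (2) of $|\cdot|_{\ca}$) yields $|\xi_j - \xi_k|_{\ca} \leq 3K\delta\max(t_j, t_k)$. With $\delta$ chosen small enough relative to the parameter $\delta_0$ governing the $\d$-moderation of $\tilde h$, this implies $t_j \asymp t_k$, with constants depending only on the covering data and the moderation constants. The strengthened condition \eqref{eq:ekstrabetingelse} on $h_2$ (with exponent $\beta$) is invoked to guarantee uniform control of the moderation in the regime where $\tilde h$ is governed by $h_2$; this is the source of the hypothesis $M' > M + L/(\beta\alpha_1)$, which feeds into the quantitative constants. As a consequence, the two factors
\[
\min\!\left(\frac{t_j}{t_k}, \frac{t_k}{t_j}\right)^{\nu/2 + L/\alpha_1} \quad\text{and}\quad \left(1+\max(t_j,t_k)^{-1}|\xi_j-\xi_k|_{\ca}\right)^{-M}
\]
are both bounded above and below by positive constants, so the proposition reduces to establishing
\[
|\langle\eta_{j,n}, \psi_{k,m}\rangle| \leq C\left(1+\min(t_j,t_k)|x_{j,n}-x_{k,m}|_{\ca}\right)^{-N}.
\]

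For this residual inequality, split on the size of $\min(t_j, t_k)$. When $\min(t_j, t_k) \geq 1$, Lemma \ref{lemma:1} yields the estimate directly, since $\min(t_j/t_k, t_k/t_j)^{\nu/2} \asymp 1$ under $t_j \asymp t_k$. When $\min(t_j, t_k) < 1$, the hypothesis $N' > 2N + L/\alpha_1$ activates Lemma \ref{lemma:bandlimited}, which furnishes an even stronger decay in the spatial variable and, \emph{a fortiori}, the required bound.

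The main obstacle is the moderation step. In the inhomogeneous setting treated in \cite[Lemma 3.1]{NR12}, the dilation parameters $\{t_j\}$ are bounded below by a positive constant, so the equivalence $t_j \asymp t_k$ in the overlap case is essentially immediate. In the present homogeneous setting $t_j$ may approach $0$, and the ramp construction \eqref{eq:hybrid} interweaving $h_1$ and $h_2$ must be unpacked carefully; this is precisely where the added growth condition \eqref{eq:ekstrabetingelse} plays its role, by supplying the quantitative moderation of $h_2$ at the scale $\max(t_j, t_k)$. Once the equivalence $t_j \asymp t_k$ in the overlap regime is cleanly in place, the remainder of the proof is just an appeal to Lemmas \ref{lemma:1} and \ref{lemma:bandlimited}.
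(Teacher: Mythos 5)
Your argument is correct, but it takes a genuinely different and much shorter route than the paper. The paper never exploits the disjoint-support dichotomy: it splits on $\min(t_j,t_k)<1$ versus $\min(t_j,t_k)\ge 1$, in the first case combining the vanishing-moment estimate of Lemma \ref{lemma:bandlimited} in direct space with Lemma \ref{lemma:1} on the Fourier side through the interpolation identity \eqref{eq:sqrt}, and in the second case using \eqref{eq:ekstrabetingelse} to convert surplus frequency decay of order $2L/(\beta\alpha_1)$ into the factor $(t_j/t_k)^{2L/\alpha_1}$. You instead note that under \eqref{eq:supppsi} the inner product vanishes unless $\text{supp}(\hat\eta_{j,n})\subseteq B_{\ca}(\xi_j,2\delta t_j)$ meets $Q_k=B_{\ca}(\xi_k,\delta t_k)$, and that in the overlap case both off-diagonal factors are comparable to $1$, so only the spatial factor remains to be proved; this is logically sound, and it shows that \eqref{eq:ekstrabetingelse} and the hypothesis $M'>M+\frac{L}{\beta\alpha_1}$ are actually superfluous for this particular statement (your appeal to them is a red herring). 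Two small repairs: (i) to get $t_j\asymp t_k$ in the overlap case, do not pass through $|\xi_j-\xi_k|_{\ca}\le 3K\delta\max(t_j,t_k)$ and then ask for $\delta$ ``small enough'' --- $\delta<\delta_0/2$ is already fixed by the covering and $3K\delta$ may well exceed $\delta_0$; instead pick $\zeta$ in the intersection, use $\d(\xi_j,\zeta)\le 2\delta t_j<\delta_0 t_j$ and $\d(\xi_k,\zeta)\le\delta t_k<\delta_0 t_k$, and apply the moderation of $\tilde h$ twice to obtain $t_j\asymp t_k$ with constant $R^2$; (ii) Lemma \ref{lemma:1} must be invoked with exponent $2N$, which is precisely why the proposition assumes only $2N>\nu$. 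What your route loses is reusability: the paper's Case 3 deliberately avoids \eqref{eq:supppsi} so that it can be recycled verbatim in Proposition \ref{prop:semigenerelle}, where compact Fourier support is dropped and your dichotomy is unavailable; your proof contributes nothing to that later argument.
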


\begin{proof}
	We split the proof into three different cases.
	\begin{itemize}
		\item[Case 1:] Suppose $t_j\leq t_k$ and $t_j <1$. Using Lemma \ref{lemma:bandlimited} gives
		\begin{align}\label{case1:etapsi}
		|\ip{\eta_{j,n},\psi_{k,m}}| \leq C \left(\frac{t_j}{t_k}\right)^{\nu/2 + L/\alpha_1} (1+t_j|x_{j,n}-x_{k,m}|_{\ca})^{-2N},
		\end{align}
where $L=|\gamma|$ and $\alpha_1 > 1$ (as in Lemma \ref{lemma:bandlimited}). Moreover, using Lemma \ref{lemma:1} for $\langle\hat{\eta}_{j,n},\hat{\psi}_{k,m}\rangle$ gives
		\begin{align}\label{case1:hatetapsi}
		|\langle\hat{\eta}_{j,n},\hat{\psi}_{k,m}\rangle| \leq C \left(\frac{t_j}{t_k}\right)^{\nu/2} (1+t_k^{-1}|\xi_j-\xi_k|_{\ca})^{-2M}.
		\end{align}
		Now, combing the estimates \eqref{case1:etapsi} and \eqref{case1:hatetapsi}, and using that
		\begin{align}\label{eq:sqrt}
		|\ip{\eta_{j,n},\psi_{k,m}}| = |\ip{\eta_{j,n},\psi_{k,m}}|^{1/2} |\langle \hat{\eta}_{j,n},\hat{\psi}_{k,m}\rangle|^{1/2},
		\end{align}
		we obtain
		\begin{align*}
		|\ip{\eta_{j,n},\psi_{k,m}}| &\leq C \left(\frac{t_j}{t_k} \right)^{\nu/2+\tilde{L}} (1+t_j|x_{j,n}-x_{k,m}|_{\ca})^{-N} (1+t_k^{-1}|\xi_j-\xi_k|_{\ca})^{-M}.
		\end{align*}
		
		\item[Case 2:] Suppose $t_k\leq t_j$ and $t_k < 1$. By using similar arguments as in case 1 we obtain the required estimate.
		
		\item[Case 3:] Finally, suppose $t_j\leq t_k$ and $t_j\geq 1$. We first consider the case $|\xi_j-\xi_k|_{\ca} \leq \rho_0 t_k^{1+\beta}$. Since $t_j \geq 1$, the hybrid regulation function $\tilde{h}^{1+\beta}$ is moderate by definition of $\tilde{h}$, \eqref{eq:hybrid}, and assumption \eqref{eq:ekstrabetingelse}. Thus we have
		\begin{align}\label{case3_1}
		\frac{1}{1+t_k^{-1}|\xi_j-\xi_k|_{\ca}}  \leq 1 \leq R_0^{\tfrac{\beta}{1+\beta}} \left(\frac{t_j}{t_k} \right)^\beta.
		\end{align}
		Now consider the case $|\xi_j-\xi_k|_{\ca} > \rho_0t_k^{1+\beta}$. Since $t_j \geq 1$ we get
		\begin{align}\label{case3_2}
		\frac{1}{1+t_k^{-1}|\xi_j-\xi_k|_{\ca}} \leq \frac{1}{1+t_k^{-1}\rho_0 t_k^{1+\beta}} \leq \frac{1}{\rho_0t_k^\beta}\leq \frac{1}{\rho_0} \left( \frac{t_j}{t_k}\right)^\beta.  
		\end{align}
		Using Lemma \ref{lemma:1} for $\langle \hat{\eta}_{j,n},\hat{\psi}_{k,m}\rangle$ together with the estimates \eqref{case3_1} and \eqref{case3_2}, we obtain
		\begin{align}\label{case3_3}
		|\langle \hat{\eta}_{j,n},\hat{\psi}_{k,m}\rangle| &\leq C \left(\frac{t_j}{t_k} \right)^{\nu/2} (1+t_k^{-1}|\xi_j-\xi_k|_{\ca})^{-2M-2\frac{{L}}{\beta\alpha_1}} \nonumber \\
		&\leq C\left(\frac{t_j}{t_k} \right)^{\nu/2+2{L}/\alpha_1}(1+t_k^{-1}|\xi_j-\xi_k|_{\ca})^{-2M}.
		\end{align}
		Combining \eqref{case3_3} with \eqref{eq:estimate1} from Lemma \ref{lemma:1} and using \eqref{eq:sqrt} we obtain the required estimate. 
		\end{itemize}	
\end{proof}

In Proposition \ref{prop:almostdiag} we assumed that the functions in $\{\psi_{k,m}\}_{k\in J,m\in\Z^d}$ have compact support in the frequency domain $\mathbb{R}^d\backslash\{0\}$. In the following, we omit this assumption and consider a system $\{\psi_{k,m}\}_{k,m}$ satisfying only condition \eqref{eq:psidecay},  \eqref{eq:hatpsidecay}  together with our original frame $\{\eta_{j,n}\}_{j\in J,n\in \Z^d}$ defined in \eqref{eq:eta}. 
We first notice that the proof of Proposition \ref{prop:almostdiag} only used the assumption about compact support in frequency for $\{\psi_{k,m}\}_{k,m}$ in Cases 1 and 2, that is when $\min(t_j,t_k) < 1$. Thus, in the proof of following lemma, we only consider these cases. 

\begin{mylemma}\label{lemma:enikkebandlimited}
Let $L\in\N$ and choose $N>\nu+L/\alpha_1$. Let $\{\eta_{j,n}\}_{j\in J,n\in\Z^d}$ be the frame defined in \eqref{eq:eta}  and let $\{\psi_{k,m}\}_{k\in J,m\in Z^d}$ satisfy \eqref{eq:psidecay} and \eqref{eq:hatpsidecay} with $N'>2N+L/\alpha_1$ and $M'>M+\frac{L}{\beta\alpha_1}$. Assume $\min(t_j, t_k)<1$. Then there exists a constant $C:=C(L)>0$ such that
\begin{align*}
\left|\ip{\eta_{j,n},\psi_{k,m}} \right| &\leq C \min\left(\frac{t_j}{t_k},\frac{t_k}{t_j}\right)^{\alpha_1 L - L/\alpha_1} (1+\min(t_j,t_k)|x_{j,n}-x_{k,m}|_{\ca})^{-N}.
\end{align*}
\end{mylemma}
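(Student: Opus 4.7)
The argument parallels Cases~1 and 2 of the proof of Proposition~\ref{prop:almostdiag}, with a BAPU decomposition of $\psi_{k,m}$ replacing the use of Lemma~\ref{lemma:bandlimited}, which is no longer available since $\hat{\psi}_{k,m}$ need not be compactly supported. By symmetry assume $t_j\le t_k$, so $t_j<1$. Using the partition of unity $\{\phi_l^2\}_{l\in J}$ from \eqref{eq:squarerootbapu}, write
\[
\psi_{k,m}=\sum_{l\in J}\psi_{k,m}^{(l)},\qquad \widehat{\psi_{k,m}^{(l)}}:=\phi_l^2\,\hat{\psi}_{k,m},
\]
so that each $\psi_{k,m}^{(l)}$ has Fourier support in $2Q_l\subset\R^d\setminus\{0\}$ and hence vanishing moments of every order. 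Since $\operatorname{supp}(\hat{\eta}_{j,n})\subseteq Q_j$, only indices $l$ in the uniformly bounded set $N(j):=\{l:2Q_l\cap Q_j\ne\emptyset\}$ contribute to $\langle \eta_{j,n},\psi_{k,m}\rangle=\sum_{l\in N(j)}\langle \eta_{j,n},\psi_{k,m}^{(l)}\rangle$, and admissibility and moderation give $t_l\asymp t_j$ and $|\xi_l-\xi_j|_{\ca}\lesssim t_j$ for $l\in N(j)$.

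For $l\in N(j)$, combine the pointwise bound $|\widehat{\psi_{k,m}^{(l)}}(\xi)|\le Ct_k^{-\nu/2}(1+t_k^{-1}|\xi_l-\xi_k|_{\ca})^{-2M'}$, valid on $2Q_l$, with the spatial decay \eqref{eq:psidecay} via routine convolution estimates on $(\phi_l^2)^\vee*\psi_{k,m}$ to show
\[
|\psi_{k,m}^{(l)}(x)|\le C\,(t_l/t_k)^{\nu/2}(1+t_k^{-1}|\xi_l-\xi_k|_{\ca})^{-2M'}\, t_l^{\nu/2}(1+t_l|x-x_{k,m}|_{\ca})^{-R}
\]
for any prescribed $R\le N'$. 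Applying Lemma~\ref{lemma:decay} with $R=2N+L/\alpha_1$, using \eqref{eq:afledtefjn} for $\eta_{j,n}$ together with the vanishing moments of $\psi_{k,m}^{(l)}$, and exploiting $t_l\asymp t_j<1$ so that $\min(t_l^{L/\alpha_1},t_l^{L/\alpha_2})=t_l^{L/\alpha_1}\asymp t_j^{L/\alpha_1}$, one obtains
\[
|\langle \eta_{j,n},\psi_{k,m}^{(l)}\rangle|\le C\, t_j^{\alpha_1 L-L/\alpha_1}(t_j/t_k)^{\nu/2}(1+t_k^{-1}|\xi_l-\xi_k|_{\ca})^{-2M'}(1+t_j|x_{j,n}-x_{k,m}|_{\ca})^{-N}.
\]
Summing the uniformly bounded number of terms in $N(j)$ and using $|\xi_l-\xi_j|_{\ca}\lesssim t_j\le t_k$ to replace $\xi_l$ by $\xi_j$ at the cost of constants yields the analogous bound with $\xi_j$ in place of $\xi_l$.

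It remains to absorb the prefactor $t_j^{\alpha_1 L-L/\alpha_1}(t_j/t_k)^{\nu/2}(1+t_k^{-1}|\xi_j-\xi_k|_{\ca})^{-2M'}$ into $C(t_j/t_k)^{\alpha_1 L-L/\alpha_1}$, discarding the frequency-decay factor (which is $\le 1$). If $t_k\le 1$ this follows from $t_j^{\alpha_1 L-L/\alpha_1}\le(t_j/t_k)^{\alpha_1 L-L/\alpha_1}$ and $(t_j/t_k)^{\nu/2}\le 1$. If $t_k>1$, split according as $|\xi_j-\xi_k|_{\ca}\le\rho_0 t_k^{1+\beta}$ or $>\rho_0 t_k^{1+\beta}$ and invoke the moderation hypothesis \eqref{eq:ekstrabetingelse} exactly as in Case~3 of Proposition~\ref{prop:almostdiag}; the spare frequency decay furnished by $M'>M+L/(\beta\alpha_1)$ then trades for the required scale-ratio boost.

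The principal technical hurdle is the direct-space convolution estimate for $\psi_{k,m}^{(l)}$, which mixes the two scales $t_l$ and $t_k$ and requires an integral split according to the relative position of $x$ and $x_{k,m}$. The conversion in the subcase $t_k>1$ is similarly delicate and rests on the moderation condition \eqref{eq:ekstrabetingelse}, mirroring the balanced use of this hypothesis in Proposition~\ref{prop:almostdiag}.
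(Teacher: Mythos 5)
Your strategy --- frequency-localise $\psi_{k,m}$ to manufacture vanishing moments and then invoke Lemma \ref{lemma:decay} --- is in the right spirit, but it diverges from the paper's argument in a way that creates a genuine gap in the scale bookkeeping. The paper first performs the change of variables $u=A_kx$, so that the $L$-th derivatives of the smooth factor $\mu_j(A_jA_k^{-1}u-\tfrac{\pi}{a}n)$ are bounded by $(t_j/t_k)^{\alpha_1L}$ (see \eqref{eq:1}): the \emph{ratio} appears directly, and the residual $t_j^{-L/\alpha_1}$ from the moment factor is absorbed using only $t_k\ge 1$. In your version both functions live at scale $\asymp t_j$ in the original variable, so Lemma \ref{lemma:decay} can only produce $t_j^{\alpha_1L-L/\alpha_1}$, and you must manufacture the missing factor $t_k^{-(\alpha_1L-L/\alpha_1)}$ from $(1+t_k^{-1}|\xi_j-\xi_k|_{\ca})^{-2M'}$ when $t_k>1$. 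This does not follow ``exactly as in Case~3'' of Proposition \ref{prop:almostdiag}: both \eqref{case3_1} and \eqref{case3_2} use $t_j\ge 1$, which is false here, and even where one can extract $(1+t_k^{-1}|\xi_j-\xi_k|_{\ca})^{-1}\lesssim t_k^{-\beta}$ from \eqref{eq:ekstrabetingelse}, absorbing $t_k^{\alpha_1L-L/\alpha_1}$ would require $2M'\beta\ge \alpha_1L-L/\alpha_1$, which the hypothesis $M'>M+\tfrac{L}{\beta\alpha_1}$ does not supply once $\alpha_1^2>3$ (here $M$ is fixed while $L$ is large). So the final absorption step fails.

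There are two further problems. First, to obtain derivative bounds of size $t_j^{\alpha_1L}$ on the smooth factor you must strip the modulation $e^{ix\cdot\xi_j}$ from $\eta_{j,n}$ and attach it to the other factor (otherwise each derivative brings down a power of $|\xi_j|$, not $t_j^{\alpha_1}$); but then the modulated piece has Fourier support in $2Q_l-\xi_j$, which \emph{contains the origin} for every $l$ with $\xi_j\in 2Q_l$ --- in particular $l=j$, the dominant term --- so the vanishing moments needed for Lemma \ref{lemma:decay} fail precisely where they matter, and unlike the paper's Case~1 (which is governed by whether $\xi_k\in Q_j$ and hence yields $t_j\asymp t_k$) this degenerate case tells you nothing about $t_k$. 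The paper sidesteps this by using a single bump $\hat\rho$ supported in $Q_j-\xi_k$ (away from $0$ exactly when $Q_j\cap Q_k=\emptyset$), not a BAPU indexed near $j$. Second, your intermediate pointwise bound on $\psi_{k,m}^{(l)}$ claiming simultaneously the full frequency factor $(1+t_k^{-1}|\xi_l-\xi_k|_{\ca})^{-2M'}$, the improved sup-norm constant $t_l^{\nu}t_k^{-\nu/2}$, and the full spatial decay $(1+t_l|x-x_{k,m}|_{\ca})^{-R}$ is not ``routine'': a direct-space convolution estimate cannot see the frequency separation after taking absolute values, and Fourier-side integration by parts is unavailable because \eqref{eq:hatpsidecay} gives no control on derivatives of $\hat\psi_{k,m}$; only a geometric-mean version of that bound is accessible.
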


\begin{proof}
Without loss of generality assume that $t_j \leq t_k$. We start by considering the case $t_k\geq 1$. We have, by \eqref{eq:structure},
\begin{align}\label{eq:startproof}
\left|\ip{\eta_{j,n},\psi_{k,m}}\right| \leq C \int_{\mathbb{R}^d} |t_j^{\nu/2} e^{ix\cdot\xi_j} \mu_j(A_j x-\tfrac{\pi}{a}n) \; t_k^{\nu/2}  e^{-ix\cdot\xi_k} \upsilon_{k,m}(A_k x-\tfrac{\pi}{a}m)| \d x.
\end{align} 
By a change of variable, letting $u=A_k x$, we obtain
\begin{align}\label{eq:novan}
\left|\ip{\eta_{j,n},\psi_{k,m}}\right| &\leq C \int_{\mathbb{R}^d} |t_j^{\nu/2} e^{iA_k^{-1}u(\xi_j-\xi_k)} \mu_j(A_j A_k^{-1}u-\tfrac{\pi}{a}n) \; t_k^{\nu/2} \upsilon_{k,m}(u-\tfrac{\pi}{a}m)| t_k^{-\nu} \d u \nonumber \\
&\leq C \left(\frac{t_j}{t_k}\right)^{\nu/2} \int_{\mathbb{R}^d} | e^{iA_k^{-1}u(\xi_j-\xi_k)} \mu_j(A_j A_k^{-1}u-\tfrac{\pi}{a}n) \upsilon_{k,m}(u-\tfrac{\pi}{a}m) | \d u.
\end{align} 
Our wish is to use Lemma \ref{lemma:decay}. However, we first need to clarify that all the assumptions are satisfied. 
We begin by considering $|\partial_u^\gamma \mu_j (A_j A_k^{-1}u-\tfrac{\pi}{a}n)|$. Using the chain rule, and the estimate \eqref{eq:estimatepartialmu}, we find that
\begin{align}\label{eq:1}
|\partial_u^\gamma \mu_j (A_j A_k^{-1}u-\tfrac{\pi}{a}n)| &= \left| \left(\frac{t_j}{t_k} \right)^{\ca\cdot \gamma} (\partial_u^\gamma \mu_j)(A_j A_k^{-1}u - \frac{\pi}{a}n)\right| \nonumber \\
&\leq C \left(\frac{t_j}{t_k} \right)^{\alpha_1 L} \left(1+ |A_j A_k^{-1}u-\frac{\pi}{a}n|_{\ca}\right)^{-2N} \nonumber \\
&\leq C \left(\frac{t_j}{t_k} \right)^{\alpha_1 L} \left( 1 + \frac{t_j}{t_k} \left|u-u_{j,k,n}\right|_{\ca}\right)^{-2N},
\end{align}
where $|\gamma|=L$ and $u_{j,k,n} = A_j^{-1}A_k \frac{\pi}{a}n$.
Using that $\eta_{j,n}$ has compact support in frequency, $\hat{\eta}_{j,n} \subseteq Q_j$, we define a set $E$ as follows:
\begin{align*}
E = \text{supp}[ \mathcal{F}\{{\e^{iA_k^{-1}u(\xi_j-\xi_k)}\mu_j(A_jA_k^{-1}\cdot-\frac{\pi}{a}n)}\} ],
\end{align*} 
where $\mathcal{F}$ denotes the Fourier transform. Thus $E \subseteq Q_j - \xi_k$ and we distinguish two cases.

Case 1: $\{0\} \in E$. In this case $\xi_k \in Q_j$ and $Q_j \cap Q_k \ne \emptyset$. Using similar arguments as in Case 1 in the proof of Lemma \ref{lemma:bandlimited} we obtain the required estimate.

Case 2: $\{0\} \not\in E$. Here $Q_j\cap Q_k = \emptyset$. Now choose a smooth bump function $\hat{\rho}(\xi)$ that is equal to $1$ when $\xi \in E$ and equal to zero when $\xi$ is outside of $Q_j$. Then we may rewrite \eqref{eq:novan}, using \eqref{eq:structure}, as
\begin{align}\label{eq:integralmedfoldning}
\left|\ip{\eta_{j,n},\psi_{k,m}}\right| \leq C \left(\frac{t_j}{t_k} \right)^{\nu/2} \int_{\mathbb{R}^d} |
\mu_j(A_j A_k^{-1}u-\frac{\pi}{a}n) \rho * \upsilon_{k,m}(u-\frac{\pi}{a}m) | \d u.
\end{align}
The function $\rho * \upsilon_{k,m}(u-\frac{\pi}{a}m)$ has compact support in the frequency domain $\hat{\rho} \hat{\psi}_k(u-\frac{\pi}{a}m) \subseteq E$, where $\{0\} \not\in E$. Hence the vanishing moment condition \eqref{eq:vanising} is satisfied. 
Now we only need to examine the decay properties of $\rho * \upsilon_{k,m}(u-\frac{\pi}{a}m)$. By definition
\begin{align*}
|(\rho * \upsilon_{k,m})(u-\frac{\pi}{a}m)| = \left|\int_{\mathbb{R}^d} \rho(u-\frac{\pi}{a}m - y) \upsilon_{k,m}(y) \d y \right|.
\end{align*}
Since $\hat{\rho}$ is constructed around $Q_j$ we use similar arguments as in the proof of Lemma \ref{lemma:1} and obtain the following estimate, see e.g. \cite[Appendix B]{MR3243741}.
\begin{align}\label{eq:3}
|(\rho * \upsilon_{k,m})(u-\frac{\pi}{a}m)| &\leq C \int_{\mathbb{R}^d} \frac{t_j^{\nu/2}} {(1+t_j|u-\frac{\pi}{a}m-y|_{\ca})^{2N}} \frac{1}{(1+|y|_{\ca})^{2N}} \d y \nonumber \\
&\leq C t_j^{\nu/2} (1+t_j|u-\frac{\pi}{a}m|_{\ca})^{-2N}.
\end{align}
Now consider the integral in \eqref{eq:integralmedfoldning}. We evaluate this integral by using the same technique as in the proof of Lemma \ref{lemma:decay}. 
Set $f_{j,k,n}(u) = \mu_j(A_jA_k^{-1}u - \frac{\pi}{a}n)$ and $g_{k,m}(u) = \rho * \upsilon_{k,m}(u-\frac{\pi}{a}m)$. By the vanishing moments of the function $g_{k,m}(u)$ and by using the estimates \eqref{eq:1} and \eqref{eq:3} we obtain
\begin{align}\label{eq:fandgestimate}
&\left|\int_{\mathbb{R}^d} f_{j,k,n}(u)g_{k,m}(u) \d u \right| \leq 
\int_{\mathbb{R}^d} \left| f_{j,k,n}(u) - \sum_{|\gamma|\leq L-1} \frac{\partial^\gamma_{u} f_{j,k,n}(\frac{\pi}{a}m)}{\gamma !}(u-\frac{\pi}{a}m)^\gamma \right| | g_{k,m}(u)| \d u \nonumber \\
&\leq C \int_{\mathbb{R}^d} |u-\frac{\pi}{a}m|^L | \partial^\gamma_{u} f_{j,k,n}(y)||g_{k,m}(u)|\d u \nonumber \\
&\leq C \left(\frac{t_j}{t_k} \right)^{\alpha_1L} \int_{\mathbb{R}^d} \frac{\max\{|u-\frac{\pi}{a}m|_{\ca}^{L/\alpha_1}, |u-\frac{\pi}{a}m|_{\ca}^{L/\alpha_2}\}} {(1+\frac{t_j}{t_k}|y-u_{j,k,n}|_{\ca})^{2N}} \frac{t_j^{\nu/2}}{(1+t_j|u-\frac{\pi}{a}m|_{\ca})^{2N}} \d u, 
\end{align}
for some $y$ on the line segment joining $u$ and $\frac{\pi}{a}m$. Using that $t_j\leq t_k$ and $t_k\geq 1$, together with the quasi-triangle inequality, we find that
\begin{align}\label{eq:estimaty}
\frac{1}{K} \frac{1}{1+\frac{t_j}{t_k}|y-u_{j,k,n}|_{\ca}} \leq K \frac{1+t_j|u-\frac{\pi}{a}m|_{\ca}}{1+\frac{t_j}{t_k}|u_{j,k,n}-\frac{\pi}{a}m|_{\ca}}.
\end{align}

With this estimate we proceed from \eqref{eq:fandgestimate}.
\begin{align*}
&\left|\int_{\mathbb{R}^d} f_{j,k,n}(u)g_{k,m}(u) \d u \right| \\
&\leq C \left(\frac{t_j}{t_k}\right)^{\alpha_1L} \frac{t_j^{\nu/2}}{(1+\frac{t_j}{t_k}|u_{j,k,n}-\frac{\pi}{a}m|_{\ca})^{N}} \int_{\mathbb{R}^d} \frac{\max\{|u-\frac{\pi}{a}m|_{\ca}^{L/\alpha_1}, |u-\frac{\pi}{a}m|_{\ca}^{L/\alpha_2}\} }
{(1+t_j|u-\frac{\pi}{a}m|_{\ca})^{2N-N}} \d u \\
&\leq C \left(\frac{t_j}{t_k}\right)^{\alpha_1L} \frac{t_j^{\nu/2}}{(1+\frac{t_j}{t_k}|u_{j,k,n}-\frac{\pi}{a}m|_{\ca})^{N}} \int_{\mathbb{R}^d}
\frac{\max\{(t_j^{-1}|w|_{\ca})^{L/\alpha_1}, (t_j^{-1}|w|_{\ca})^{L/\alpha_2} \}}{(1+|w|_{\ca})^{N}} t_j^{-\nu} \d w \\
&\leq C \left(\frac{t_j}{t_k}\right)^{\alpha_1L} 
\frac{t_j^{-\nu/2-L/\alpha_1}}{(1+\frac{t_j}{t_k}|u_{j,k,n}-\frac{\pi}{a}m|_{\ca})^{N}} \int_{\mathbb{R}^d}
\frac{\max\{|w|_{\ca}^{L/\alpha_1}, |w|_{\ca}^{L/\alpha_2} \}}{(1+|w|_{\ca})^{N}} \d w \\
&\leq C \left(\frac{t_j}{t_k} \right)^{\alpha_1 L - \nu/2 - L/\alpha_1} (1+t_j|x_{j,n}-x_{k,m}|_{\ca})^{-N},
\end{align*}
where the last inequality follows since $N>  \nu + L/\alpha_1$.
Using this estimate in \eqref{eq:integralmedfoldning} we obtain
\begin{align}\label{eq:trouble}
\left|\ip{\eta_{j,n},\psi_{k,m}}\right| \leq C \left(\frac{t_j}{t_k} \right)^{\alpha_1 L - L/\alpha_1} (1+t_j|x_{j,n}-x_{k,m}|_{\ca})^{-N}.
\end{align}
The case $t_j\leq t_k < 1$ is handled in a similar fashion starting from \eqref{eq:startproof}, but without any change of variable. The details are left for the reader.
\end{proof}

\begin{myre}\label{re:scale}
It is precisely the exponent $(\alpha_1-\alpha_1^{-1})L$ appearing in \eqref{eq:trouble} that motivates our standing assumption that $\alpha_1>1$ to ensure that $\alpha_1-\alpha_1^{-1}>0$.
\end{myre}
We are now ready to state the following result, which is analogous to Proposition \ref{prop:almostdiag}, but with the  improvement  that we do not assume compact support in the frequency domain of the system $\{\psi_{k,m}\}_{k,m}$.

\begin{myprop}\label{prop:semigenerelle}
Let $K>0$ and $L\in \N$ satisfy $\nu/2+K = \frac{1}{2}(\nu/2 + \alpha_1 L - L/\alpha_1)$ with $N,M,K$ chosen such that $N>\nu+L/\alpha_1$ and $2M > \nu$. 
Let $\{\eta_{j,n}\}_{j\in J,n\in\Z^d}$ be the frame defined in \eqref{eq:eta} satisfying \eqref{eq:etadecay} and \eqref{eq:hatetadecay}, and let $\{\psi_{k,m}\}_{k\in J,m\in Z^d}$ satisfy \eqref{eq:psidecay} and \eqref{eq:hatpsidecay} with $N'>2N+L/\alpha_1$ and $M'> M+\frac{L}{\beta\alpha_1}$. Then there exists a constant $C:=C(K)>0$ such that
\begin{align*}
\left|\ip{\eta_{j,n},\psi_{k,m}} \right| &\leq C\min\left(\frac{t_j}{t_k},\frac{t_k}{t_j} \right)^{\nu/2+K} (1+\max(t_j,t_k)^{-1}|\xi_j-\xi_k|_{\ca})^{-M} \nonumber \\
&\times (1+\min(t_j,t_k)|x_{j,n}-x_{k,m}|_{\ca})^{-N/2}.
\end{align*}
\end{myprop}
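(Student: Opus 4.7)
The plan is to mirror the structure of the proof of Proposition \ref{prop:almostdiag}, replacing Lemma \ref{lemma:bandlimited} by Lemma \ref{lemma:enikkebandlimited} whenever $\min(t_j,t_k) < 1$ (those are precisely the cases in which the compact-frequency-support assumption entered into Proposition \ref{prop:almostdiag}). In the remaining case $\min(t_j,t_k) \geq 1$, the argument of Case~3 in the proof of Proposition \ref{prop:almostdiag} never invoked compact support of $\hat\psi_{k,m}$ and can therefore be reused verbatim. Without loss of generality we assume throughout that $t_j \leq t_k$.

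\textbf{Cases 1 and 2 ($t_j < 1$, resp.\ $t_k < 1$).} Treat Case~1 first; Case~2 is symmetric under swapping $(j,n) \leftrightarrow (k,m)$. Applying Lemma \ref{lemma:enikkebandlimited} yields the direct-space estimate
\[
|\langle \eta_{j,n}, \psi_{k,m}\rangle| \leq C\,\bigl(t_j/t_k\bigr)^{\alpha_1 L - L/\alpha_1}\bigl(1 + t_j|x_{j,n}-x_{k,m}|_{\ca}\bigr)^{-N}.
\]
By Plancherel, $\langle \eta_{j,n}, \psi_{k,m}\rangle = \langle \hat\eta_{j,n}, \hat\psi_{k,m}\rangle$, and the frequency-side decay estimates \eqref{eq:hatetadecay} and \eqref{eq:hatpsidecay} put us in the setting of Lemma \ref{lemma:1} applied in frequency (with role of scale $t_j$ replaced by $t_j^{-1}$ and centre $x_{j,n}$ by $\xi_j$), so
\[
|\langle \hat\eta_{j,n}, \hat\psi_{k,m}\rangle| \leq C\,\bigl(t_j/t_k\bigr)^{\nu/2}\bigl(1 + t_k^{-1}|\xi_j-\xi_k|_{\ca}\bigr)^{-2M}.
\]
Using the geometric-mean identity $|\langle \eta, \psi\rangle| = |\langle \eta, \psi\rangle|^{1/2}|\langle \hat\eta, \hat\psi\rangle|^{1/2}$ to combine the two bounds produces the scale factor $(t_j/t_k)^{\frac12(\alpha_1 L - L/\alpha_1) + \nu/4}$, which by the hypothesised relation $\nu/2 + K = \tfrac12(\nu/2 + \alpha_1 L - L/\alpha_1)$ is exactly $(t_j/t_k)^{\nu/2+K}$. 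The direct- and frequency-space decay exponents collapse to $N/2$ and $M$ respectively, giving precisely the claimed inequality.

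\textbf{Case 3 ($t_j \geq 1$).} Here we copy the corresponding case of the proof of Proposition \ref{prop:almostdiag}. The only ingredients used there are Lemma \ref{lemma:1} applied in both direct and frequency space, the Plancherel identity together with the geometric-mean trick, and the moderation assumption \eqref{eq:ekstrabetingelse} on $h_2^{1+\beta}$, which allows us to trade $(1+t_k^{-1}|\xi_j-\xi_k|_{\ca})^{-1}$ factors for powers of $(t_j/t_k)^\beta$. None of these ingredients requires compact support of $\hat\psi_{k,m}$. The hypotheses $N' > 2N + L/\alpha_1$ and $M' > M + L/(\beta\alpha_1)$ are precisely what is needed to run that argument, yielding a bound of the required form in this regime.

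\textbf{Main obstacle.} The principal technical point is the bookkeeping in Cases 1 and 2: one must carefully verify that the exponents obtained from Lemma \ref{lemma:enikkebandlimited} on the direct side and Lemma \ref{lemma:1} on the frequency side recombine under the geometric-mean trick to give exactly $\nu/2 + K$, with the declared relation between $K$, $L$, $\alpha_1$, and $\nu$. The scale loss compared with Proposition \ref{prop:almostdiag} is the natural cost of dropping the compact-frequency-support assumption, and the factor of $\tfrac12$ reduction in the spatial decay exponent (from $N$ to $N/2$) is an inherent consequence of the geometric-mean balancing.
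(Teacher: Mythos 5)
Your proposal is correct and follows essentially the same route as the paper's own proof: the case $\min(t_j,t_k)\ge 1$ is delegated to Proposition \ref{prop:almostdiag} (whose Case 3 never used compact frequency support), and for $\min(t_j,t_k)<1$ one combines Lemma \ref{lemma:enikkebandlimited} in direct space with Lemma \ref{lemma:1} in frequency space via the geometric-mean identity \eqref{eq:sqrt}, with the exponent bookkeeping $\tfrac12(\nu/2+\alpha_1L-L/\alpha_1)=\nu/2+K$ exactly as in the paper.
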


\begin{proof}
We first notice that in case $\min(t_j,t_k) \geq 1$, we may conclude by using the result in Proposition \ref{prop:almostdiag}, since this particular case did not use the assumption of compact support. For the case $\min(t_j,t_k) < 1$ we assume,  without loss of generality that $t_j\leq t_k$.
Using Lemma \ref{lemma:enikkebandlimited} gives
\begin{align*}
\left|\ip{\eta_{j,n},\psi_{k,m}} \right| &\leq C \left(\frac{t_j}{t_k} \right)^{\alpha_1 L - L/\alpha_1} (1+t_j|x_{j,n}-x_{k,m}|_{\ca})^{-N}.
\end{align*}
Moreover, using Lemma \ref{lemma:1} for $\langle\hat{\eta}_{j,n},\hat{\psi}_{k,m}\rangle$ gives
\begin{align*}
|\langle\hat{\eta}_{j,n},\hat{\psi}_{k,m}\rangle| \leq C \left(\frac{t_j}{t_k}\right)^{\nu/2} (1+t_k^{-1}|\xi_j-\xi_k|_{\ca})^{-2M}.
\end{align*}
Now, inserting the above estimates in \eqref{eq:sqrt} we obtain
\begin{align*}
\left|\ip{\eta_{j,n},\psi_{k,m}} \right| &\leq C \left(\frac{t_j}{t_k} \right)^{\frac{1}{2}(\nu/2 + \alpha_1 L - L/\alpha_1)} (1+t_k^{-1}|\xi_j-\xi_k|_{\ca})^{-M} \\
&\times (1+t_j|x_{j,n}-x_{k,m}|_{\ca})^{-N/2}.
\end{align*}
Since $K+\nu/2 = \frac{1}{2}(\nu/2 + \alpha_1 L - L/\alpha_1)$ we have obtained the wanted estimate.  
The case $t_k\leq t_j$ and $t_k < 1$ follows in parallel with the above, and we therefore leave the details for the reader.  
\end{proof}

Comparing the result in Proposition \ref{prop:almostdiag} with the above we see that the matrix $\{\ip{\eta_{j,n},\psi_{k,m}}\}_{k,m,j,n}$ satisfies Definition \ref{def:almostdiagonal}, even though the assumptions about compact support for the functions $\psi_{k,m}$ were omitted. 
We now consider much more  general families of functions for which Definition \ref{def:almostdiagonal} hold. 

\begin{mytheorem}\label{thm:almostdiagonal}
Let $K>0$ and $L\in \N$ satisfy $\nu/2+K = \frac{1}{2}(\nu/2 + \alpha_1 L - L/\alpha_1)$ with $N,M,K$ chosen such that $N>\nu+L/\alpha_1$ and $2M > \nu$ and suppose $\{\psi^{(1)}_{j,n}\}_{j\in J, n\in\Z^d}$ and $\{\psi^{(2)}_{k,m}\}_{k\in J, m\in\Z^d}$ satisfy \eqref{eq:psidecay} and \eqref{eq:hatpsidecay}  with $N'>2N+L/\alpha_1$ and $M'> M+\frac{L}{\beta\alpha_1}$. Then there exists a constant $C:=C(K)>0$ such that
\begin{align*}
|\langle \psi^{(1)}_{j,n},\psi^{(2)}_{k,m}\rangle| &\leq C \min\left( \frac{t_j}{t_k},\frac{t_k}{t_j} \right)^{\nu/2+K} (1+\max(t_j,t_k)^{-1}|\xi_j-\xi_k|_{\ca})^{-M} \\
&\times (1+\min(t_j,t_k)|x_{j,n}-x_{k,m}|_{\ca})^{-N/2}.
\end{align*}
In particularly, suppose $s\in\R$, $0<p,q<\infty$ and put $r:=\min(1,p,q)$.  If \begin{equation}\label{eq:Nprime}
N'>\frac{2\nu}{r}+\frac{\nu r+4|s| r +4\nu}{r(\alpha_1^2-1)},
\end{equation} and 
\begin{equation}\label{eq:Mprime}
M'>\frac{\nu}{r}++\frac{L}{\beta\alpha_1}>\frac{\nu}{2}+\frac{\nu r+4|s| r +4\nu}{2r\beta(\alpha_1^2-1)},
\end{equation}
then $\{\langle \psi^{(1)}_{j,n},\psi^{(2)}_{k,m}\rangle\} \in \text{ad}_{p,q}^s$. 
\end{mytheorem}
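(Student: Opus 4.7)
My plan is to first establish the pointwise estimate on $|\langle\psi^{(1)}_{j,n},\psi^{(2)}_{k,m}\rangle|$, and then deduce membership in $\text{ad}_{p,q}^s(\tilde h)$ by matching it against Definition \ref{def:almostdiagonal}.

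For the pointwise estimate, the key idea is to bridge the two general systems $\{\psi^{(1)}_{j,n}\}$ and $\{\psi^{(2)}_{k,m}\}$ through the canonical tight frame $\{\eta_{i,l}\}_{i\in J,\,l\in\Z^d}$, for which Proposition \ref{prop:semigenerelle} is already available. Since $\{\eta_{i,l}\}$ is a tight frame for $L_2(\R^d)$, we may expand
$$\langle\psi^{(1)}_{j,n},\psi^{(2)}_{k,m}\rangle=\sum_{i\in J,\,l\in\Z^d}\langle\psi^{(1)}_{j,n},\eta_{i,l}\rangle\,\langle\eta_{i,l},\psi^{(2)}_{k,m}\rangle.$$
Each factor on the right is bounded by Proposition \ref{prop:semigenerelle} (after taking complex conjugates, if necessary, to put $\eta_{i,l}$ in the first slot); the decay hypotheses of that proposition are inherited from the standing assumptions on $\psi^{(1)}$ and $\psi^{(2)}$. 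This reduces matters to showing that the sum of products of two kernels of the structural type produced by Proposition \ref{prop:semigenerelle} is again a kernel of the same type. That is exactly the closure property established in Proposition \ref{prop:lukket}: provided each factor fits Definition \ref{def:almostdiagonal} with some $\delta>0$, their composition does too, so the claimed bound on $|\langle\psi^{(1)}_{j,n},\psi^{(2)}_{k,m}\rangle|$ follows.

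For the second statement I need to check that the kernel produced by Proposition \ref{prop:semigenerelle}, namely
$$\min\!\Bigl(\tfrac{t_j}{t_k},\tfrac{t_k}{t_j}\Bigr)^{\nu/2+K}\bigl(1+\max(t_j,t_k)^{-1}|\xi_j-\xi_k|_{\ca}\bigr)^{-M}\bigl(1+\min(t_j,t_k)|x_{j,n}-x_{k,m}|_{\ca}\bigr)^{-N/2},$$
dominates the expression in Definition \ref{def:almostdiagonal}. Multiplying out the symmetric $\min$-factors in that definition and considering the case $t_j\le t_k$ (the reverse being analogous), this amounts to the three inequalities $N/2\ge\nu/r+\delta$ (spatial decay), $M\ge\nu/r+\delta$ (frequency decay), and the requirement that $\nu/2+K$ dominates the combined $(t_j/t_k)$-exponent produced by $(t_k/t_j)^{s+\nu/2}$ together with the two $\min$-factors. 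Using the identity $\nu/2+K=\tfrac{1}{2}\bigl(\nu/2+(\alpha_1-\alpha_1^{-1})L\bigr)$, so that $K$ is linear in $L$ with slope $\tfrac{\alpha_1^2-1}{2\alpha_1}>0$, all three conditions reduce to a single lower bound on $L$ of order $\alpha_1(|s|+\nu/r)/(\alpha_1^2-1)$. Substituting this lower bound into the hypotheses $N'>2N+L/\alpha_1$ and $M'>M+L/(\beta\alpha_1)$ of Proposition \ref{prop:semigenerelle}, with the smallest admissible $N$ and $M$, yields precisely the bounds \eqref{eq:Nprime} and \eqref{eq:Mprime}.

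The main obstacle will be the arithmetic bookkeeping in this last step, since the exponents interact nontrivially through the factor $\alpha_1^2-1$ (which is exactly the reason for the standing assumption $\alpha_1>1$ highlighted in Remark \ref{re:scale}). A secondary point is that $\delta>0$ in Definition \ref{def:almostdiagonal} is free, so the strict inequalities in \eqref{eq:Nprime} and \eqref{eq:Mprime} provide just enough slack to fix a suitable $\delta$. The invocation of Proposition \ref{prop:lukket} is indispensable here because, unlike in the inhomogeneous setting of \cite{NR12}, the dilation parameters $t_j$ are not bounded away from zero, so the composition of two kernels of the type supplied by Proposition \ref{prop:semigenerelle} cannot be estimated by a naive pointwise argument.
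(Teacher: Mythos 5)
Your proposal is correct and follows essentially the same route as the paper: expand $\langle\psi^{(1)}_{j,n},\psi^{(2)}_{k,m}\rangle$ via the tight frame $\{\eta_{i,l}\}$, bound each factor with Proposition \ref{prop:semigenerelle}, invoke the closure property of Proposition \ref{prop:lukket} for the resulting matrix composition, and then derive \eqref{eq:Nprime} and \eqref{eq:Mprime} by comparing the exponents (through the relation $\nu/2+K=\tfrac12(\nu/2+\alpha_1 L-L/\alpha_1)$ and the thresholds $N/2>\nu/r$, $M>\nu/r$, $K>|s|+2\nu/r$) with Definition \ref{def:almostdiagonal}. This matches the paper's argument in both structure and the final bookkeeping.
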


\begin{proof}
Since $\{\eta_{i,l}\}_{i\in J, l\in \Z^d}$ is a tight frame we have
\begin{align*}
\langle\psi^{(1)}_{j,n},\psi^{(2)}_{k,m}\rangle = \sum_{i\in J,l\in\Z^d} \langle\psi^{(1)}_{j,n},\eta_{i,l}\rangle \langle\eta_{i,l},\psi^{(2)}_{k,m}\rangle.
\end{align*}
This corresponds to the composition of two operators with matrices  $\{\langle\psi^{(1)}_{j,n},\eta_{i,l}\rangle\}_{i,k,l,m}$ and $\{\langle\eta_{i,l},\psi^{(2)}_{k,m}\rangle\}_{m,l,i,k}$, respectively. By Proposition \ref{prop:semigenerelle} these matrices are almost diagonal. Proposition \ref{prop:lukket} now implies that the product of two almost diagonal matrices is almost diagonal. The final claim follows from the  estimates

$$K>|s|+2\frac{\nu}{r}\Rightarrow L>\frac{1}2\frac{(\nu r+4|s| r +4\nu)\alpha_1}{r(\alpha_1^2-1)},$$ so
$$N'>2\nu+2L/\alpha_1>2\nu+\frac{\nu r+4|s| r +4\nu}{r(\alpha_1^2-1)}.$$
and
$$M'>\frac{\nu}2+\frac{L}{\beta\alpha_1}>\frac{\nu}{2}+\frac{\nu r+4|s| r +4\nu}{2r\beta(\alpha_1^2-1)}.$$
At the same time we must have $N/2>\frac{\nu}{r}$ and $M>\frac{\nu}{r}$ by
 comparing the estimate of $|\langle \psi^{(1)}_{j,n},\psi^{(2)}_{k,m}\rangle| $ to Definition \ref{def:almostdiagonal}. This completes the proof. 

\end{proof}

\section{An application: Compactly supported frames}\label{sec:compactly}
We now turn to our main example of an application of the algebra of almost diagonal matrices. We will construct a system $\{\psi_{k,n}\}_{k\in J, n\in\Z^d}$ which is a small
perturbation of the frame $\{\eta_{k,n}\}_{k\in J, n\in\Z^d}$ given by
\eqref{eq:eta}.
Following a general approach introduced by Kyriazis and Petrushev \cite{MR2204289} for classical Triebel-Lizorkin and Besov spaces, we  first to show that a system $\{\psi_{k,n}\}_{k\in J, n\in\Z^d}$,
which is close enough to the tight frame $\{\eta_{k,n}\}_{k\in J, n\in\Z^d}$, in a suitable sense, is also a frame for
$\dot{F}^s_{p,q}(\tilde{h})$ and $\dot{M}^s_{p,q}(\tilde{h})$. Next, to get a frame expansion in $\dot{F}^s_{p,q}(\tilde{h})$ and $\dot{M}^s_{p,q}(\tilde{h})$, we show that
$\{S^{-1}\psi_{k,n}\}_{k\in J, n\in\Z^d}$ is also a frame, where $S$ is the corresponding frame
operator given by
\begin{equation*}
 S f=\sum_{k\in J, n\in\Z^d}\langle f,\psi_{k,n}\rangle
\psi_{k,n}.
\end{equation*}

Now suppose $\{\psi_{k,n}\}_{k\in J, n\in\Z^d}\subset L_2(\R^d)$ is a system that is close to $\{\eta_{k,n}\}_{k\in J, n\in\Z^d}$ in
the sense that for fixed $s\in \R$ there exists $\varepsilon, \delta
>0$ such that
\begin{align}
&|\eta_{k,n}(x)-\psi_{k,n}(x)|\le \varepsilon t_k^{\frac{\nu}{2}}(1+t_k|x_{k,n}-x|_{\mathbf{a}})^{-2N'}\label{bruce},\\
&|\hat{\eta}_{k,n}(\xi)-\hat{\psi}_{k,n}(\xi)|\le \varepsilon
t_k^{-\frac{\nu}{2}}(1+t_k^{-1}|\xi_{k}-\xi|_{\mathbf{a}})^{-2M'},\label{springsteen}
\end{align}
where we have used the notation from Definition \ref{def:almostdiagonal}, and $N', M'$ satisfy the conditions given by \eqref{eq:Nprime} and \eqref{eq:Mprime}, respectively. 
Motivated by the fact that $\{\eta_{k,n}\}_{k\in J, n\in\Z^d}$ is a tight frame for $L_2(\R^d)$, we formally define $\langle f,\psi_{j,m}\rangle$ as
\begin{equation}
\langle f,\psi_{j,m}\rangle:=\sum_{k\in J}\sum_{n\in \Z^d}\langle \eta_{k,n},\psi_{j,m}\rangle \langle
f,\eta_{k,n}\rangle, \,\, f \in \dot{F}^s_{p,q}(\tilde{h}).
\end{equation}
We deduce from Proposition \ref{johnlennon} and Theorem \ref{thm:almostdiagonal} that $\langle \cdot,\psi_{j,m}\rangle$ is a bounded linear functional on $\dot{F}^s_{p,q}(\tilde{h})$; in fact we have
\begin{align}
\sum_{k\in J, n\in\Z^d}|\langle \eta_{k,n},\psi_{j,m}\rangle||\langle
f,\eta_{k,n}\rangle| \notag
&\le \Big\|\Big\{\sum_{k\in J, n\in\Z^d}|\langle \eta_{k,n},\psi_{j,m}\rangle| |\langle
f,\eta_{k,n}\rangle| \Big\}_{j,m\in\Z^d}\Big\|_{\dot{f}^s_{p,q}(\tilde{h})} \\ &\le C \|\langle
f,\eta_{k,n}\rangle \|_{\dot{f}^s_{p,q}(\tilde{h})}\le
C\|f\|_{\dot{F}^s_{p,q}(\tilde{h})}.\label{thiswillbeday}
\end{align}
Furthermore, $\{\psi_{k,n}\}_{k\in J, n\in\Z^d}$ is a norming family for $\dot{F}^s_{p,q}(\tilde{h})$ as it satisfies $$\|\langle f,\psi_{k,n}\rangle\|_{\dot{f}^s_{p,q}(\tilde{h})} \le C \|f\|_{\dot{F}^s_{p,q}(\tilde{h})}.$$ This can be used to show that $S$ is a bounded operator on $\dot{F}^s_{p,q}(\tilde{h})$, and for small enough $\varepsilon$, this will be the key to showing that $\{\psi_{k,n}\}_{k\in J, n\in\Z^d}$ is a frame for $\dot{F}^s_{p,q}(\tilde{h})$.
\begin{mytheorem}\label{generation1}%
There exists $\varepsilon_0,C_1,C_2>0$ such that if $\{\psi_{k,n}\}_{k\in J, n\in\Z^d}$ satisfies \eqref{bruce} and
\eqref{springsteen} for some $0<\varepsilon \le \varepsilon_0$ and $f\in \dot{F}^s_{p,q}(\tilde{h})$, then we have
\begin{equation}
C_1\|f\|_{\dot{F}^s_{p,q}(\tilde{h})}\le \|\langle
f,\psi_{k,n}\rangle\|_{\dot{f}^s_{p,q}(\tilde{h})}\le C_2\|f\|_{\dot{F}^s_{p,q}(\tilde{h})}.
\end{equation}
Similarly for $\dot{M}^s_{p,q}(\tilde{h})$ and $\dot{m}^s_{p,q}(\tilde{h})$.
\end{mytheorem}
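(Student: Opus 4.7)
The upper bound inequality is essentially already established in the paragraph preceding the theorem: the formal definition of $\langle f,\psi_{j,m}\rangle$ together with the almost diagonality of the matrix $\{\langle \eta_{k,n},\psi_{j,m}\rangle\}$ (obtained from Theorem \ref{thm:almostdiagonal} applied to the two systems $\{\eta_{k,n}\}$ and $\{\psi_{k,n}\}$, where the hypotheses \eqref{bruce}, \eqref{springsteen} together with the decay of $\{\eta_{k,n}\}$ from Lemma \ref{prop:decayeta} show $\{\psi_{k,n}\}$ satisfies \eqref{eq:psidecay} and \eqref{eq:hatpsidecay} with the exponents \eqref{eq:Nprime}, \eqref{eq:Mprime}) and the boundedness of almost diagonal matrices on $\dot{f}^s_{p,q}(\tilde{h})$ from Proposition \ref{johnlennon}, yields the upper estimate \eqref{thiswillbeday}. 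So the real work is the lower bound.

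For the lower bound, the plan is to exhibit $\{\langle f,\psi_{j,m}\rangle\}$ as a small perturbation of the frame-coefficient sequence $\{\langle f,\eta_{j,m}\rangle\}$. Since $\{\eta_{k,n}\}$ is a Parseval (tight, constant $1$) frame for $L_2(\mathbb{R}^d)$, we have the reproducing identity $\sum_{k,n}\langle \eta_{k,n},\eta_{j,m}\rangle\langle f,\eta_{k,n}\rangle=\langle f,\eta_{j,m}\rangle$ for $f\in L_2$, and by density this extends to the frame-coefficient sequences of $f\in\dot{F}^s_{p,q}(\tilde{h})$. Splitting $\langle \eta_{k,n},\psi_{j,m}\rangle=\langle \eta_{k,n},\eta_{j,m}\rangle+\langle \eta_{k,n},\psi_{j,m}-\eta_{j,m}\rangle$ then gives
\begin{equation*}
\langle f,\psi_{j,m}\rangle=\langle f,\eta_{j,m}\rangle+(D\mathbf{c})_{j,m},\qquad \mathbf{c}=\{\langle f,\eta_{k,n}\rangle\}_{k,n},
\end{equation*}
where $D:=\{\langle \eta_{k,n},\psi_{j,m}-\eta_{j,m}\rangle\}_{(j,m),(k,n)}$.

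The key observation is that the hypotheses \eqref{bruce}, \eqref{springsteen} say exactly that $\psi_{k,n}-\eta_{k,n}$ obeys the decay conditions \eqref{eq:psidecay}, \eqref{eq:hatpsidecay} with the small constant $\varepsilon$ in front. Applying Theorem \ref{thm:almostdiagonal} to $\{\eta_{k,n}\}$ and $\{\psi_{k,n}-\eta_{k,n}\}$, one sees that $D\in \mathrm{ad}^s_{p,q}(\tilde{h})$ with an almost-diagonal constant bounded by $C\varepsilon$, $C$ independent of $\varepsilon$. Proposition \ref{johnlennon} then yields the operator bound $\|D\|_{\dot{f}^s_{p,q}(\tilde{h})\to\dot{f}^s_{p,q}(\tilde{h})}\le C'\varepsilon$. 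Using the $r$-triangle inequality (with $r=\min(1,p,q)$) on the identity above,
\begin{equation*}
\bigl\|\{\langle f,\eta_{j,m}\rangle\}\bigr\|_{\dot{f}^s_{p,q}(\tilde{h})}^{r}\le \bigl\|\{\langle f,\psi_{j,m}\rangle\}\bigr\|_{\dot{f}^s_{p,q}(\tilde{h})}^{r}+(C'\varepsilon)^{r}\bigl\|\{\langle f,\eta_{k,n}\rangle\}\bigr\|_{\dot{f}^s_{p,q}(\tilde{h})}^{r},
\end{equation*}
and choosing $\varepsilon_{0}$ so that $(C'\varepsilon_{0})^{r}\le 1/2$ absorbs the last term. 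Combining with the frame norm equivalence from Proposition \ref{prop:frame} for $\{\eta_{k,n}\}$ produces the desired lower bound $C_{1}\|f\|_{\dot{F}^s_{p,q}(\tilde{h})}\le \|\langle f,\psi_{k,n}\rangle\|_{\dot{f}^s_{p,q}(\tilde{h})}$. The $\dot{M}^s_{p,q}(\tilde{h})$/$\dot{m}^s_{p,q}(\tilde{h})$ statement follows verbatim, since Proposition \ref{johnlennon} and Proposition \ref{prop:frame} provide the analogous tools there.

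The main obstacle is the quantitative tracking of the $\varepsilon$-dependence through Theorem \ref{thm:almostdiagonal}: one has to check that the constant in the almost-diagonal estimate of Theorem \ref{thm:almostdiagonal} depends multiplicatively and in a controlled way on the amplitude constants in the decay hypotheses \eqref{eq:psidecay}, \eqref{eq:hatpsidecay} of the input systems, so that replacing $\{\psi_{k,n}\}$ by $\{\psi_{k,n}-\eta_{k,n}\}$ yields a factor $C\varepsilon$ rather than only an $O(1)$ bound. This is essentially bookkeeping through the three lemmas \ref{lemma:1}, \ref{lemma:bandlimited}, \ref{lemma:enikkebandlimited} and Propositions \ref{prop:almostdiag}, \ref{prop:semigenerelle}, where each estimate is linear in the amplitude of the input functions; the resulting constant in the definition of $\mathrm{ad}^s_{p,q}(\tilde{h})$ therefore inherits the $\varepsilon$-factor, and via Proposition \ref{johnlennon} so does the operator norm on $\dot{f}^s_{p,q}(\tilde{h})$. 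Everything else is a standard Neumann-type perturbation argument adapted to the quasi-Banach setting by the $r$-triangle inequality.
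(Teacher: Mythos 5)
Your proposal is correct and follows essentially the same route as the paper: the upper bound comes from the norming-family property, and the lower bound comes from observing that $\{\varepsilon^{-1}(\eta_{k,n}-\psi_{k,n})\}$ is itself a norming family (i.e.\ the matrix $\{\langle \eta_{k,n},\psi_{j,m}-\eta_{j,m}\rangle\}$ is almost diagonal with constant proportional to $\varepsilon$), followed by an absorption argument. Your explicit use of the $r$-triangle inequality in the quasi-Banach range is a slightly more careful rendering of the paper's triangle-inequality step, but the substance is identical.
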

\begin{proof}
The proof will only be given for $\dot{F}^s_{p,q}(\tilde{h})$ as it follows the
same way for $\dot{M}^s_{p,q}(\tilde{h})$. That $\{\psi_{k,n}\}_{k\in J, n\in\Z^d}$ is a norming
family gives the upper bound, thus we only need to establish the
lower bound. For this we notice that
$\{\varepsilon^{-1}(\eta_{k,n}-\psi_{k,n})\}_{k\in J, n\in\Z^d}$ is also a norming family
so we have
\begin{equation*}
\|\langle f,\eta_{k,n}-\psi_{k,n}\rangle\|_{\dot{f}^s_{p,q}(\tilde{h})}\le C
\varepsilon \|f\|_{\dot{F}^s_{p,q}(\tilde{h})}.
\end{equation*}
It then follows that
\begin{align*}
\|f\|_{\dot{F}^s_{p,q}(\tilde{h})} &\le C \|\langle
f,\eta_{k,n}\rangle\|_{\dot{f}^s_{p,q}(\tilde{h})}\\
&\le C (\|\langle f,\psi_{k,n}\rangle\|_{\dot{f}^s_{p,q}(\tilde{h})}+\|\langle
f,\eta_{k,n}-\psi_{k,n}\rangle\|_{\dot{f}^s_{p,q}(\tilde{h})}) \\
&\le C(\|\langle f,\psi_{k,n}\rangle\|_{\dot{f}^s_{p,q}(\tilde{h})}+\varepsilon
\|f\|_{\dot{F}^s_{p,q}(\tilde{h})}).
\end{align*}
By choosing $\varepsilon<1/C$ we get the lower bound. \\
\end{proof}
As suggested by Theorem \ref{generation1}, the boundedness
of the matrix $$\{\langle \eta_{k,n}, S^{-1}\psi_{j,m}\rangle\}_{k,j\in J; n,m\in\Z^d}$$ on
$\dot{f}^s_{p,q}(\tilde{h})$ is the key to showing that $\{S^{-1}\psi_{k,n}\}_{k\in J, n\in\Z^d}$ is
also a frame.
\begin{myprop}\label{layla}%
There exists $\varepsilon_0>0$ such that if $\{\psi_{k,n}\}_{k\in J, n\in\Z^d}$ is a frame for $\dot{F}^0_{22}(\tilde{h})=L_2(\R^d)$
and satisfies \eqref{bruce} and \eqref{springsteen} for some $0<\varepsilon\le\varepsilon_0$, then $\{\langle \eta_{k,n},
S^{-1}\psi_{j,m}\rangle\}_{k,j\in J; n,m\in\Z^d}$ is bounded on $\dot{f}^s_{p,q}(\tilde{h})$ and
$\dot{m}^s_{p,q}(\tilde{h})$.
\end{myprop}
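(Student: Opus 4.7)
The plan is to reduce the boundedness of $A_{(k,n),(j,m)} := \langle \eta_{k,n}, S^{-1}\psi_{j,m}\rangle$ on the sequence spaces to the almost-diagonal calculus of Section \ref{sec:almostdiagonal}, combined with a Neumann series argument for the sequence-space avatar of $S$. I introduce $B := \{\langle \eta_{k,n}, \psi_{j,m}\rangle\}$ and $G := \{\langle \eta_{k,n}, \eta_{j,m}\rangle\}$; by Theorem \ref{thm:almostdiagonal} both lie in $\text{ad}_{p,q}^s(\tilde{h})$ and hence act boundedly on $\dot{f}^s_{p,q}(\tilde{h})$ and $\dot{m}^s_{p,q}(\tilde{h})$. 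The crucial observation is that the perturbation $E := B - G = \{\langle \eta_{k,n}, \psi_{j,m} - \eta_{j,m}\rangle\}$ satisfies the bounds of Definition \ref{def:almostdiagonal} with overall constant of order $\varepsilon$; this is obtained by rerunning the arguments behind Proposition \ref{prop:semigenerelle} and Theorem \ref{thm:almostdiagonal} with $\psi_{j,m}-\eta_{j,m}$ in place of $\psi_{j,m}$, exploiting the $\varepsilon$-prefactor in \eqref{bruce}--\eqref{springsteen} together with the choice \eqref{eq:Nprime}--\eqref{eq:Mprime} of $N', M'$.

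Next, I encode the action of $S$ in the $\eta$-basis by the matrix $M_{(k,n),(j,m)} := \langle S\eta_{j,m}, \eta_{k,n}\rangle$. Using $S\eta_{j,m} = \sum_{i,l}\langle\eta_{j,m},\psi_{i,l}\rangle\psi_{i,l}$, I obtain $M_{(k,n),(j,m)} = \sum_{i,l}B_{(j,m),(i,l)}\overline{B_{(k,n),(i,l)}}$, which exhibits $M$ as a product of two almost-diagonal matrices and places $M \in \text{ad}_{p,q}^s(\tilde{h})$ by Proposition \ref{prop:lukket}. The tight frame property of $\{\eta_{k,n}\}$ yields the analogous identity for $G$ itself (with $\psi$ replaced by $\eta$, up to a harmless constant depending only on the frame bound). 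Substituting $B = G + E$ into these two identities writes $M - G$ as a sum of products each carrying exactly one factor of $E$; a further invocation of Proposition \ref{prop:lukket}, tracking the $\varepsilon$-scaling from the first step, then shows that $M - G$ has operator norm at most $C\varepsilon$ on both $\dot{f}^s_{p,q}(\tilde{h})$ and $\dot{m}^s_{p,q}(\tilde{h})$.

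For the inversion, note that $G$ corresponds via Proposition \ref{prop:frame} to the canonical (oblique) projection onto the coefficient image $T_\eta(\dot{F}^s_{p,q}(\tilde{h}))$, respectively $T_\eta(\dot{M}^s_{p,q}(\tilde{h}))$, on which it acts as the identity. Hence, for $\varepsilon \le \varepsilon_0$ small enough, $M = G + (M - G)$ admits a Neumann-series (generalized) inverse $M^{-1}$ on this invariant subspace, bounded on $\dot{f}^s_{p,q}(\tilde{h})$ and $\dot{m}^s_{p,q}(\tilde{h})$. To identify $A$ with a composition involving $M^{-1}$, I apply $S$ to the tight-frame expansion $S^{-1}\psi_{j,m} = \sum_{k,n}\overline{A_{(k,n),(j,m)}}\,\eta_{k,n}$ and pair the resulting identity $\psi_{j,m} = S(S^{-1}\psi_{j,m})$ with $\eta_{i,l}$; this yields the matrix equation $\bar B = M \bar A$ on the range of $G$, so $\bar A = M^{-1}\bar B$. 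Composing the bounded operators $M^{-1}$ and $B$ (the latter by Theorem \ref{thm:almostdiagonal}) then delivers the desired boundedness of $A$.

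The main technical obstacle is the Neumann inversion step: the natural invariant subspace is $T_\eta(\dot{F}^s_{p,q}(\tilde{h}))$ inside $\dot{f}^s_{p,q}(\tilde{h})$, which is not the entire sequence space, so care is needed to interpret the series on this (possibly non-closed) subspace and to confirm that $\varepsilon_0$ can be chosen in terms of structural constants alone — the ad-constants of $B$ and $G$ furnished by Theorem \ref{thm:almostdiagonal} and the composition constant of Proposition \ref{prop:lukket} — independently of $\psi$ itself. Once this quantitative bookkeeping is arranged so that $\|M - G\|$ is controlled purely by $\varepsilon$, the series closes and the conclusion follows uniformly for both $\dot{f}^s_{p,q}(\tilde{h})$ and $\dot{m}^s_{p,q}(\tilde{h})$.
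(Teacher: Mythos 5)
Your perturbative strategy is the right one in spirit, and its first step is sound: since $\{\varepsilon^{-1}(\eta_{k,n}-\psi_{k,n})\}$ satisfies \eqref{eq:psidecay}--\eqref{eq:hatpsidecay} with constants independent of $\varepsilon$, Theorem \ref{thm:almostdiagonal} applied to this system shows that $E=B-G$ is almost diagonal with ad-constant $O(\varepsilon)$. Note, however, that to convert this into an operator-norm bound of order $\varepsilon$ you cannot simply cite Proposition \ref{prop:lukket}, which is qualitative; you need the bilinearity in the constants visible in Lemma \ref{poison} and \eqref{smellsliketeen}, together with the fact (read off from the proof of Proposition \ref{johnlennon}) that the operator norm of an almost diagonal matrix is controlled linearly by its ad-constant.

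The genuine gap is the inversion step, which you flag as ``the main technical obstacle'' but do not resolve, and which as set up does not close. You propose a Neumann series for $M=\{\langle S\eta_{j,m},\eta_{k,n}\rangle\}$ on the invariant subspace $T_\eta(\dot{F}^s_{p,q}(\tilde{h}))$ of $\dot{f}^s_{p,q}(\tilde{h})$, where $T_\eta f:=\{\langle f,\eta_{k,n}\rangle\}$. This subspace is in general neither closed nor complemented; off this subspace $G$ is an oblique projection rather than the identity, so the estimate $\|M-G\|\le C\varepsilon$ on all of $\dot{f}^s_{p,q}(\tilde{h})$ does not produce an invertible $M$, and a ``generalized inverse'' defined by a series on a non-closed subspace is not a bounded operator on $\dot{f}^s_{p,q}(\tilde{h})$ that you may then compose with $B$. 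The only way to make the series converge on that subspace is to transport its norm back to $\dot{F}^s_{p,q}(\tilde{h})$ via the frame isomorphism of Proposition \ref{prop:frame} --- at which point your series is literally $\sum_i(I-S)^i$ on $\dot{F}^s_{p,q}(\tilde{h})$, and the detour through $M$ buys nothing. The argument the paper refers to (Proposition 5.2 of \cite{NR12}) does this directly: one shows $\|Sf-f\|_{\dot{F}^s_{p,q}(\tilde{h})}\le C\varepsilon\|f\|_{\dot{F}^s_{p,q}(\tilde{h})}$ by writing $Sf-f=\sum\langle f,\psi_{k,n}-\eta_{k,n}\rangle\psi_{k,n}+\sum\langle f,\eta_{k,n}\rangle(\psi_{k,n}-\eta_{k,n})$ and combining the norming-family estimate \eqref{thiswillbeday} (applied also to $\varepsilon^{-1}(\eta_{k,n}-\psi_{k,n})$) with the synthesis bound of Proposition \ref{prop:frame}; hence $S^{-1}$ is bounded on $\dot{F}^s_{p,q}(\tilde{h})$ for $\varepsilon\le\varepsilon_0$, and the matrix in question factors as $T_\psi\circ S^{-1}\circ T_\eta^*$, a composition of three bounded maps $\dot{f}^s_{p,q}(\tilde{h})\to\dot{F}^s_{p,q}(\tilde{h})\to\dot{F}^s_{p,q}(\tilde{h})\to\dot{f}^s_{p,q}(\tilde{h})$. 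I recommend you replace the matrix-level inversion with this operator-level argument; the same reasoning applies verbatim to $\dot{M}^s_{p,q}(\tilde{h})$ and $\dot{m}^s_{p,q}(\tilde{h})$.
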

The proof is identical to the proof of Proposition 5.2 in \cite{NR12} and we will therefore omit it.

The fact that $\{S^{-1}\psi_{k,n}\}_{k\in J, n\in\Z^d}$ is a frame for $\dot{F}^s_{p,q}(\tilde{h})$ and $\dot{M}^s_{p,q}(\tilde{h})$ now follows as a consequence of $\{\langle \eta_{k,n},
S^{-1}\psi_{j,m}\rangle\}_{k,n,j,m\in\Z^d}$ being bounded on $\dot{f}^s_{p,q}(\tilde{h})$ and $\dot{m}^s_{p,q}(\tilde{h})$.
We state the following results without proofs as they follow
directly in the same way as in the classical Triebel-Lizorkin and Besov spaces. The proofs
can be found in \cite{MR1862566}. First, we have the frame expansion.
\begin{mylemma}\label{mualim1}%
Assume that $\{\psi_{k,n}\}_{k\in J, n\in\Z^d}$ is a frame for $L_2(\R^d)$ and
satisfies
\begin{align}
&|\psi_{k,n}(x)|\le C t_k^{\frac{\nu}{2}}(1+t_k|x_{k,n}-x|_{\mathbf{a}})^{-2N'}, \label{easy1}\\
&|\hat{\psi}_{k,n}(\xi)|\le C
t_k^{-\frac{\nu}{2}}(1+t_k^{-1}|\xi_{k}-\xi|_{\mathbf{a}})^{-2M'}, \label{easy2}
\end{align}
where  $N', M'$ satisfy the conditions given by \eqref{eq:Mprime} and \eqref{eq:Nprime}, respectively.
If \\ $\{\langle \eta_{k,n}, S^{-1}\psi_{j,m}\rangle\}_{k,j\in J; n,m\in\Z^d}$ is bounded on
$\dot{f}^s_{p,q}(\tilde{h})$, then for $f\in \dot{F}^s_{p,q}(\tilde{h})$ we have
\begin{equation*}
f=\sum_{k\in J, n\in\Z^d}\langle f,S^{-1}\psi_{k,n}\rangle \psi_{k,n}
\end{equation*}
in the sense of $\mathcal{S}'/\mathcal{P}$. Similarly for $\dot{M}^s_{p,q}(\tilde{h})$ and $\dot{m}^s_{p,q}(\tilde{h})$.\\
\hspace*{\fill}\nolinebreak$\square$
\end{mylemma}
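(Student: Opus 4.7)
The plan is to produce the expansion in $\mathcal{S}'/\mathcal{P}$ by realising it as a bounded composition $\dot{F}^s_{p,q}(\tilde h)\to \dot{f}^s_{p,q}(\tilde h)\to \dot{F}^s_{p,q}(\tilde h)$ and then to identify the output with $f$ by testing against the tight frame $\{\eta_{i,l}\}$. Concretely, I would proceed in three steps: (i) bound the synthesis map $\{s_{k,n}\}\mapsto \sum_{k,n} s_{k,n}\psi_{k,n}$ from $\dot f^s_{p,q}(\tilde h)$ to $\dot F^s_{p,q}(\tilde h)$; (ii) bound the analysis sequence $\{\langle f, S^{-1}\psi_{k,n}\rangle\}_{k,n}$ in $\dot f^s_{p,q}(\tilde h)$; (iii) verify the identification $g:=\sum_{k,n}\langle f, S^{-1}\psi_{k,n}\rangle \psi_{k,n}=f$ in $\mathcal{S}'/\mathcal{P}$.

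For step (i), the system $\{\psi_{k,n}\}$ obeys exactly the same decay bounds \eqref{easy1}--\eqref{easy2} as the frame $\{\eta_{k,n}\}$ (with $N',M'$ in the range required by Theorem \ref{thm:almostdiagonal}), so the proof of Proposition \ref{prop:frame} applies verbatim to give the desired synthesis bound. For step (ii), I would use the tight frame identity for $\{\eta_{j,m}\}$ to define the coefficients formally by
\[
\langle f, S^{-1}\psi_{k,n}\rangle \;:=\;\sum_{j,m}\langle f,\eta_{j,m}\rangle\,\langle \eta_{j,m},S^{-1}\psi_{k,n}\rangle,
\]
and then combine the assumed boundedness of $\{\langle \eta_{j,m},S^{-1}\psi_{k,n}\rangle\}$ on $\dot f^s_{p,q}(\tilde h)$ with the frame characterisation $\|\langle f,\eta_{j,m}\rangle\|_{\dot f^s_{p,q}(\tilde h)}\asymp\|f\|_{\dot F^s_{p,q}(\tilde h)}$ from Proposition \ref{prop:frame}. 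Together, steps (i) and (ii) show that $g\in \dot F^s_{p,q}(\tilde h)$ and $\|g\|_{\dot F^s_{p,q}(\tilde h)}\lesssim \|f\|_{\dot F^s_{p,q}(\tilde h)}$.

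For step (iii), I would pair both $f$ and $g$ against the test element $\eta_{i,l}$, which is admissible because $\widehat{\eta}_{i,l}$ is supported in $Q_i\subset \R^d\setminus\{0\}$ so $\eta_{i,l}$ annihilates $\mathcal{P}$. On the $L_2$ side, duality of $\{\psi_{k,n}\}$ and $\{S^{-1}\psi_{k,n}\}$ gives, for every $h\in L_2$,
\[
\langle h,\eta_{i,l}\rangle \;=\; \sum_{k,n}\langle h, S^{-1}\psi_{k,n}\rangle\,\langle \psi_{k,n},\eta_{i,l}\rangle.
\]
Applying this with $h=\eta_{j,m}$, multiplying by $\langle f,\eta_{j,m}\rangle$, and summing over $(j,m)$, I would swap the order of summation to arrive at $\langle f,\eta_{i,l}\rangle=\langle g,\eta_{i,l}\rangle$ for every $(i,l)$. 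Since $\{\eta_{i,l}\}$ is a frame for $\dot F^s_{p,q}(\tilde h)$ (Proposition \ref{prop:frame}), equality of all coefficients forces $f=g$ in $\dot F^s_{p,q}(\tilde h)\subset \mathcal{S}'/\mathcal{P}$.

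The main obstacle is precisely justifying the interchange of summations in step (iii) when $\min(p,q)<1$, where one works in a quasi-Banach setting. The mechanism is the almost-diagonal calculus developed earlier: Theorem \ref{thm:almostdiagonal} ensures both $\{\langle \psi_{k,n},\eta_{i,l}\rangle\}$ and $\{\langle \eta_{j,m},S^{-1}\psi_{k,n}\rangle\}$ lie in $\mathrm{ad}^s_{p,q}(\tilde h)$, and Proposition \ref{prop:lukket} provides the closure of this class under matrix multiplication, yielding an almost-diagonal majorant for the composed kernel. This majorant, combined with $\{\langle f,\eta_{j,m}\rangle\}\in \dot f^s_{p,q}(\tilde h)$, provides the absolute convergence needed to reorder the iterated series and to close the argument. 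The $\dot M^s_{p,q}(\tilde h)$/$\dot m^s_{p,q}(\tilde h)$ case is parallel, with the sequence-space norms replaced appropriately throughout.
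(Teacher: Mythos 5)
Your overall architecture (synthesis bound, analysis bound via the hypothesised matrix, identification by matching $\eta$-coefficients) is the standard one; the paper itself omits the proof and refers to Kyriazis--Petrushev \cite{MR1862566}, where essentially this scheme is carried out. But two of your justifications do not hold as stated. First, in step (i) you claim the proof of Proposition \ref{prop:frame} ``applies verbatim'' to $\{\psi_{k,n}\}$. It does not: that proposition is proved for the band-limited frame $\{\eta_{k,n}\}$, whose Fourier supports meet only finitely many $Q_j$, whereas here $\psi_{k,n}$ satisfies only the decay bounds \eqref{easy1}--\eqref{easy2} and need not have compact frequency support. The correct route to the synthesis bound is the molecular one available inside the paper: expand $\psi_{k,n}=\sum_{i,l}\langle\psi_{k,n},\eta_{i,l}\rangle\eta_{i,l}$ via the tight frame, use Theorem \ref{thm:almostdiagonal} (with $\psi^{(1)}=\psi$, $\psi^{(2)}=\eta$) together with Proposition \ref{johnlennon} to map the coefficient sequence boundedly in $\dot f^s_{p,q}(\tilde{h})$, and only then invoke Proposition \ref{prop:frame} for $\eta$.

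The more serious problem is in step (iii), where you assert that Theorem \ref{thm:almostdiagonal} puts $\{\langle\eta_{j,m},S^{-1}\psi_{k,n}\rangle\}$ in $\mathrm{ad}^s_{p,q}(\tilde{h})$ so that Proposition \ref{prop:lukket} supplies an absolutely convergent majorant for the reordering. Theorem \ref{thm:almostdiagonal} requires \emph{both} systems to satisfy the pointwise decay conditions \eqref{eq:psidecay}--\eqref{eq:hatpsidecay}, and nothing is known about the decay of $S^{-1}\psi_{k,n}$ --- controlling the inverse frame operator is precisely the difficulty the lemma circumvents, which is why its hypothesis is only \emph{boundedness} of that matrix on $\dot f^s_{p,q}(\tilde{h})$, not almost diagonality. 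So the composition argument you lean on for the interchange of summation is unavailable. The standard repair is a density/continuity argument: the identity $f=\sum_{k,n}\langle f,S^{-1}\psi_{k,n}\rangle\psi_{k,n}$ holds for $f$ in $L_2\cap\dot F^s_{p,q}(\tilde{h})$ (or for $f$ with finitely many nonzero $\eta$-coefficients) by the $L_2$ dual-frame theory, and both sides depend continuously on $f$ in $\dot F^s_{p,q}(\tilde{h})$ by steps (i)--(ii), so the identity extends to all of $\dot F^s_{p,q}(\tilde{h})$ since $p,q<\infty$. With that substitution, and the corrected step (i), your argument goes through.
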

Moreover, we have that $\{S^{-1}\psi_{k,n}\}_{k\in J, n\in\Z^d}$ is a frame. The proof of the following fundamental result can easily be adapted from the technique introduced in \cite{MR2204289}.
\begin{mytheorem}\label{mualim2}%
Assume that $\{\psi_{k,n}\}_{k\in J, n\in\Z^d}$ is a frame for $L_2(\R^d)$ and
satisfies \eqref{easy1} and \eqref{easy2}.
Then $\{S^{-1}\psi_{k,n}\}_{k\in J, n\in\Z^d}$ is a frame for $\dot{F}^s_{p,q}(\tilde{h})$ if and
only if $\{\langle \eta_{k,n}, S^{-1}\psi_{j,m}\rangle\}_{k,j\in J; n,m\in\Z^d}$ is bounded
on $\dot{f}^s_{p,q}(\tilde{h})$. Similarly for $\dot{M}^s_{p,q}(\tilde{h})$ and $\dot{m}^s_{p,q}(\tilde{h})$.\\
\end{mytheorem}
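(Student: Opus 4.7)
The plan is to establish the frame equivalence $\|f\|_{\dot{F}^s_{p,q}(\tilde{h})} \asymp \|\{\langle f, S^{-1}\psi_{k,n}\rangle\}\|_{\dot{f}^s_{p,q}(\tilde{h})}$ by routing through the tight frame $\{\eta_{k,n}\}$, which, by Proposition \ref{prop:frame}, already satisfies $\|f\|_{\dot{F}^s_{p,q}(\tilde{h})} \asymp \|\{\langle f, \eta_{k,n}\rangle\}\|_{\dot{f}^s_{p,q}(\tilde{h})}$. Two algebraic identities will drive the whole argument. The first is the tight-frame expansion
\begin{equation*}
\langle f, S^{-1}\psi_{j,m}\rangle = \sum_{k,n} \langle f, \eta_{k,n}\rangle\,\langle \eta_{k,n}, S^{-1}\psi_{j,m}\rangle,
\end{equation*}
and the second is obtained from Lemma \ref{mualim1} by pairing the reconstruction $f = \sum \langle f, S^{-1}\psi_{j,m}\rangle\psi_{j,m}$ with $\eta_{k,n}$, giving
\begin{equation*}
\langle f, \eta_{k,n}\rangle = \sum_{j,m} \langle f, S^{-1}\psi_{j,m}\rangle\,\langle \psi_{j,m}, \eta_{k,n}\rangle.
\end{equation*}

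For the backward direction, the assumed boundedness of $T := \{\langle \eta_{k,n}, S^{-1}\psi_{j,m}\rangle\}$ on $\dot{f}^s_{p,q}(\tilde{h})$ combined with the first identity yields the upper frame bound directly. For the lower bound, Theorem \ref{thm:almostdiagonal} applied to the pair $(\{\psi_{j,m}\},\{\eta_{k,n}\})$, both of which satisfy the required decay via \eqref{easy1}--\eqref{easy2} with the prescribed $N',M'$, shows that $\{\langle \psi_{j,m}, \eta_{k,n}\rangle\} \in \text{ad}^s_{p,q}(\tilde{h})$ and hence acts boundedly on $\dot{f}^s_{p,q}(\tilde{h})$; combining with the second identity produces $\|\{\langle f, \eta_{k,n}\rangle\}\|_{\dot{f}^s_{p,q}(\tilde{h})} \le C\|\{\langle f, S^{-1}\psi_{j,m}\rangle\}\|_{\dot{f}^s_{p,q}(\tilde{h})}$. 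For the forward direction, fix a finitely supported sequence $c = \{c_{k,n}\}$ and set $f := \sum c_{k,n}\eta_{k,n}$; the synthesis bound of Proposition \ref{prop:frame} gives $\|f\|_{\dot{F}^s_{p,q}(\tilde{h})} \le C\|c\|_{\dot{f}^s_{p,q}(\tilde{h})}$, and the upper frame bound for $\{S^{-1}\psi_{k,n}\}$ then yields $\|\{\langle f, S^{-1}\psi_{j,m}\rangle\}\|_{\dot{f}^s_{p,q}(\tilde{h})} \le C'\|c\|_{\dot{f}^s_{p,q}(\tilde{h})}$. Since $\langle f, S^{-1}\psi_{j,m}\rangle = \sum_{k,n} c_{k,n}\langle \eta_{k,n}, S^{-1}\psi_{j,m}\rangle = (Tc)_{j,m}$, this is the boundedness of $T$ on the dense subspace of finitely supported sequences, which extends to all of $\dot{f}^s_{p,q}(\tilde{h})$ by density. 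The arguments for $\dot{M}^s_{p,q}(\tilde{h})$ and $\dot{m}^s_{p,q}(\tilde{h})$ are identical.

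The main technical hurdle is justifying the two identities for general $f \in \dot{F}^s_{p,q}(\tilde{h}) \subset \mathcal{S}'/\mathcal{P}$: the tight-frame expansion holds a priori only in $L_2(\R^d)$, and the reconstruction from Lemma \ref{mualim1} converges in $\mathcal{S}'/\mathcal{P}$, so the scalar pairings with the band-limited, rapidly decaying functions $\eta_{k,n}$ and $\psi_{j,m}$ must be interpreted through the duality between $\mathcal{S}'/\mathcal{P}$ and its predual. The interchanges of summation in the resulting double sums are justified by the absolute convergence supplied by the almost diagonal decay estimates of Theorem \ref{thm:almostdiagonal}, with Proposition \ref{prop:lukket} controlling any matrix composition that implicitly arises. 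The density step in the forward direction is straightforward for $p,q<\infty$; the edge cases are handled by a dual formulation, exactly as in \cite{MR2204289}.
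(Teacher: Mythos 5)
Your argument is correct and is essentially the intended one: the paper omits the proof of Theorem \ref{mualim2} entirely, deferring to the technique of Kyriazis and Petrushev \cite{MR2204289,MR1862566}, and your proof (tight-frame expansion through $\{\eta_{k,n}\}$ for the upper bound, the reconstruction of Lemma \ref{mualim1} plus the almost-diagonality of $\{\langle\psi_{j,m},\eta_{k,n}\rangle\}$ from Theorem \ref{thm:almostdiagonal} and Proposition \ref{johnlennon} for the lower bound, and testing on finite sequences $f=\sum c_{k,n}\eta_{k,n}$ for the converse) is exactly that adaptation. The convergence and density caveats you flag are handled as in the cited reference and do not affect the argument in the range $0<p,q<\infty$ to which the paper's hypotheses effectively restrict.
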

It is worth noting that Proposition \ref{layla}, Lemma \ref{mualim1} and Theorem \ref{mualim2} imply that $\{\psi_{k,n}\}_{k\in J, n\in\Z^d}$ is a Banach frame if it satisfies \eqref{bruce} and \eqref{springsteen} with sufficiently small $\varepsilon$, and $p,q\ge 1$. Furthermore, following a similar approach we can obtain a frame expansion with $\{S^{-1}\psi_{k,n}\}_{k\in J, n\in\Z^d}$.
\begin{mylemma}\label{mualim3}%
Assume that $\{\psi_{k,n}\}_{k\in J, n\in\Z^d}$ is a frame for $L_2(\R^d)$ and
satisfies \eqref{easy1} and \eqref{easy2}.
If the transpose of $\{\langle \eta_{k,n}, S^{-1}\psi_{j,m}\rangle\}_{k,j\in J, n,m\in\Z^d}$ is bounded on $\dot{f}^s_{p,q}(\tilde{h})$, then for $f\in \dot{F}^s_{p,q}(\tilde{h})$ we have
\begin{equation*}
f=\sum_{k\in J, n\in\Z^d}\langle f,\psi_{k,n}\rangle S^{-1}\psi_{k,n}
\end{equation*}
in the sense of $\mathcal{S}'\backslash \mathcal{P}$. Similarly for $\dot{M}^s_{p,q}(\tilde{h})$ and $\dot{m}^s_{p,q}(\tilde{h})$.\\
\end{mylemma}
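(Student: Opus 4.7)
The plan is to combine the tight frame expansion coming from Proposition \ref{prop:frame} with the canonical dual frame identity available in $L_2(\mathbb{R}^d)$, and then justify the swap of summation by appealing to the boundedness hypothesis on the transpose of $\{\langle \eta_{k,n},S^{-1}\psi_{j,m}\rangle\}$.

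\emph{Step 1 (dual frame in $L_2$).} Since $\{\psi_{k,n}\}$ is a frame for $L_2(\mathbb{R}^d)$ with frame operator $S$, the family $\{S^{-1}\psi_{k,n}\}$ is the canonical dual frame, so for every $g\in L_2(\mathbb{R}^d)$ we have the reconstruction $g=\sum_{k,n}\langle g,\psi_{k,n}\rangle S^{-1}\psi_{k,n}$ with $L_2$-convergence. Applying this to the tight-frame atoms $\eta_{i,l}\in L_2$ gives
\begin{equation*}
\eta_{i,l}=\sum_{k\in J, n\in\Z^d}\langle \eta_{i,l},\psi_{k,n}\rangle\, S^{-1}\psi_{k,n},
\end{equation*}
with convergence in $L_2$, and hence also in $\mathcal{S}'/\mathcal{P}$.

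\emph{Step 2 (tight frame representation of $f$).} For $f\in\dot{F}^s_{p,q}(\tilde h)$, Proposition \ref{prop:frame} yields $f=\sum_{i,l}\langle f,\eta_{i,l}\rangle\eta_{i,l}$ in $\mathcal{S}'/\mathcal{P}$. Substituting the expansion from Step 1 formally produces
\begin{equation*}
f \;=\; \sum_{i,l}\langle f,\eta_{i,l}\rangle \sum_{k,n}\langle \eta_{i,l},\psi_{k,n}\rangle S^{-1}\psi_{k,n}
\;=\;\sum_{k,n}\Big(\sum_{i,l}\langle \eta_{i,l},\psi_{k,n}\rangle\langle f,\eta_{i,l}\rangle\Big) S^{-1}\psi_{k,n},
\end{equation*}
and the bracketed quantity is exactly $\langle f,\psi_{k,n}\rangle$ by the definition given after \eqref{springsteen}.

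\emph{Step 3 (justifying the interchange).} This is the main obstacle. To make Step 2 rigorous in $\mathcal{S}'/\mathcal{P}$, I would test against a tight-frame element $\eta_{j,m}$ (equivalently, control the $\dot{f}^s_{p,q}$-coefficients) and show that the partial sums $f_F:=\sum_{(k,n)\in F}\langle f,\psi_{k,n}\rangle S^{-1}\psi_{k,n}$ over finite $F\subset J\times\Z^d$ satisfy
\begin{equation*}
\langle f_F,\eta_{j,m}\rangle=\sum_{(k,n)\in F}\langle \eta_{j,m},S^{-1}\psi_{k,n}\rangle\,\langle f,\psi_{k,n}\rangle,
\end{equation*}
which is precisely a partial sum produced by the transpose matrix $\{\langle \eta_{j,m},S^{-1}\psi_{k,n}\rangle\}$ acting on the sequence $\{\langle f,\psi_{k,n}\rangle\}\in\dot{f}^s_{p,q}(\tilde h)$ (the latter by Theorem \ref{generation1}, granted $\varepsilon$ is small enough for the frame hypotheses there, otherwise directly from \eqref{thiswillbeday}). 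By hypothesis this transpose is bounded on $\dot{f}^s_{p,q}(\tilde h)$, so the full sum converges in $\dot{f}^s_{p,q}(\tilde h)$ and the limiting coefficients coincide with $\{\langle f,\eta_{j,m}\rangle\}$ by $L_2$-considerations and the fact that, for fixed $j,m$, both sides are continuous linear functionals of the sequence $\{\langle f,\psi_{k,n}\rangle\}$ that agree on the dense subset of finitely supported sequences (the $L_2$ identity of Step 1 gives agreement there). A standard dominated-convergence/density argument then lifts the $\dot{f}^s_{p,q}$-identity to convergence in $\mathcal{S}'/\mathcal{P}$ via Proposition \ref{prop:frame}, yielding
\begin{equation*}
f=\sum_{k\in J,n\in\Z^d}\langle f,\psi_{k,n}\rangle\,S^{-1}\psi_{k,n}
\end{equation*}
in $\mathcal{S}'/\mathcal{P}$.

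\emph{Step 4 ($\dot{M}^s_{p,q}$-case).} The argument is identical, with $\dot{f}^s_{p,q}(\tilde h)$ replaced by $\dot{m}^s_{p,q}(\tilde h)$ throughout; the only input that changes is the invocation of the $\dot{m}^s_{p,q}$-boundedness of the transpose and the $\dot{M}^s_{p,q}$-version of Proposition \ref{prop:frame}. The principal difficulty is entirely the interchange of summations in Step 3, which is unlocked exactly by the boundedness hypothesis; once that is in place the rest is book-keeping.
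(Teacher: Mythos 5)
Your proposal is correct and takes essentially the route the paper itself relies on: the paper omits the proof of Lemma \ref{mualim3}, deferring to the classical argument of Kyriazis and Petrushev \cite{MR1862566}, and your Steps 1--3 (tight-frame expansion, the $L_2$ canonical dual identity, and using the boundedness of the transpose matrix together with \eqref{thiswillbeday} and the norm equivalence in Proposition \ref{prop:frame} to justify the interchange and upgrade coefficient convergence to convergence in $\mathcal{S}'\backslash\mathcal{P}$) are exactly that argument transplanted to the homogeneous setting. The only points worth tightening are the density step in Step 3 --- argue that both sides are continuous in $f$ on $\dot{F}^s_{p,q}(\tilde h)$ and agree on finite linear combinations of the $\eta_{i,l}$, rather than phrasing both sides as functionals of the coefficient sequence --- and the (standard) observation that each $S^{-1}\psi_{k,n}$ belongs to $\dot{F}^s_{p,q}(\tilde h)$, which follows by applying the transpose matrix to a unit coordinate sequence and invoking the synthesis bound of Proposition \ref{prop:frame}.
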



In particular, by using a generating function $g$ with compact support one can construct a compactly supported frame expansion. A successful approach to problems of this type, see e.g.\ \cite{NR12,MR2204289,MR2891260}, is to use finite linear combinations of a function with sufficient smoothness and decay in direct space and vanishing moments. \\
\indent In general, it suffices to obtain a system of functions $\{\tau_k\}_{k\in\Z^d}\subset L_2(\R^d)$ which is close enough to $\{\mu_k\}_{k\in\Z^d}$,
\begin{align*}
|\mu_k(x)-\tau_k(x)|&\le \varepsilon(1+|x|_{\mathbf{a}})^{-2N'},\\
|\hat{\mu}_k(\xi)-\hat{\tau}_k(\xi)|&\le \varepsilon(1+|\xi|_{\mathbf{a}})^{-2M'}.
\end{align*}
 The system
\begin{equation*}
\{\psi_{k,n}\}_{k\in J, n\in\Z^d}:=\Big\{t_k^{\nu/2}\tau_k\Big(A_kx-\frac{\pi}{a}n\Big)e^{ix
\cdot \xi_k}\Big\}_{k\in J, n\in\Z^d}
\end{equation*}
will then satisfy \eqref{bruce} and \eqref{springsteen}. First, we take $g\in C^{1}(\R^d)\cap L_2(\R^d)$, $\hat{g}(0)\not= 0$, which for fixed $N'',M''>0$ satisfies
\begin{align}
|g^{(\kappa)}(x)|&\le C(1+|x|_{\mathbf{a}})^{-2N''}, \,\,
|\kappa|\le 1, \label{kappastaff1} \\
|\hat{g}(\xi)|&\le C(1+|\xi|_{\mathbf{a}})^{-2M''}.
\label{kappastaff2}
\end{align}
Next for $m\ge 1$, we define $g_m(x):=C_g {m}^\nu g(D_{\mathbf{a}}(m) x)$,
where $C_g:=\hat{g}(0)^{-1}$.
To construct $\tau_k$ we will use the following set of finite linear combinations,
\begin{equation*}
\Theta_{K,m}=\{\psi : \psi(\cdot)=\sum_{i=1}^K a_i
g_m(\cdot+b_i),a_i\in \mathbb{C}, b_i\in \mathbb{R}^d\}.
\end{equation*}
The following result proved in \cite{NR12} provides us with the function we need.
\begin{myprop}\label{notmylove}%
Let $N''>N'>\nu$ and $M''>M'>\nu$. If $g\in C^{1}(\R^d)\cap L_2(\R^d)$, $\hat{g}(0)\not= 0$, fulfills \eqref{kappastaff1} and \eqref{kappastaff2} and $\mu_k\in C^{1}(\R^d)\cap L_2(\R^d)$ fulfills
\begin{align*}
|\mu_k(x)|&\le C(1+|x|_{\mathbf{a}})^{-2N''}, \\
|\mu_k^{(\kappa)}(x)|&\le C, \, |\kappa|\le 1,
\\ |\hat{\mu}_k(\xi)|&\le C (1+|\xi|_{\mathbf{a}})^{-2M''},
\end{align*}
then for any $\varepsilon>0$ there exists $K,m \ge 1$ and $\tau_k \in
\Theta_{K,m}$ such that
\begin{align}
|\mu_k(x)-\tau_k(x)|&\le \varepsilon (1+|x|_{\mathbf{a}})^{-2N'}, \label{lostsomeone1} \\
|\hat{\mu}_k(\xi)-\hat{\tau}_k(\xi)|&\le \varepsilon (1+|\xi|_{\mathbf{a}})^{-2M'}. \label{lostsomeone2}
\end{align}
\end{myprop}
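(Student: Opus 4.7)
The plan is to construct $\tau_k$ in two stages: first approximate $\mu_k$ by the smoothed function $g_m\ast\mu_k$ for a sufficiently large $m$, and then discretize this convolution into a finite element of $\Theta_{K,m}$. A change of variable gives $\hat g_m(\xi) = C_g\hat g(D_{\mathbf{a}}(1/m)\xi)$, so $\hat g_m(0)=1$ (equivalently $\int g_m\,dx=1$) and $\hat g_m\to 1$ uniformly on compact sets as $m\to\infty$. For the frequency-space error one writes $\hat\mu_k(\xi) - \widehat{g_m\ast\mu_k}(\xi) = (1-\hat g_m(\xi))\hat\mu_k(\xi)$ and splits at $|\xi|_{\mathbf{a}}=R$: the tail is handled by $M''>M'$ through $(1+|\xi|_{\mathbf{a}})^{-2M''}\le(1+R)^{-2(M''-M')}(1+|\xi|_{\mathbf{a}})^{-2M'}$, while the bounded region is handled by the uniform convergence of $\hat g_m$ to $1$. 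In direct space, writing
\[
(g_m\ast\mu_k)(x) - \mu_k(x) = \int_{\R^d} g_m(y)\bigl(\mu_k(x-y)-\mu_k(x)\bigr)\,dy
\]
and splitting the $y$-integration according to whether $|y|_{\mathbf{a}}$ is small or large compared with $|x|_{\mathbf{a}}$, the $C^1$-bound on $\mu_k$ combined with the joint decay of $g_m$ and $\mu_k$ at rate $2N''>2N'$ produces a pointwise estimate of the form $\eta(m)(1+|x|_{\mathbf{a}})^{-2N'}$ with $\eta(m)\to 0$. Fixing $m=m(\varepsilon)$ large therefore brings both errors below $\varepsilon/2$.

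For the discretization, substitute $b=-y$ to write $(g_m\ast\mu_k)(x) = \int g_m(x+b)\mu_k(-b)\,db$, then restrict to a large anisotropic ball $\{|b|_{\mathbf{a}}\le R_1\}$ partitioned into a fine mesh of cells $Q_1,\dots,Q_K$ with sample points $b_i\in Q_i$ and weights $w_i=|Q_i|$. Setting $a_i:=w_i\mu_k(-b_i)$ gives
\[
\tau_k(x):=\sum_{i=1}^K a_i\,g_m(x+b_i)\in\Theta_{K,m}.
\]
The error $(g_m\ast\mu_k)(x)-\tau_k(x)$ decomposes into a truncation piece (integral over $|b|_{\mathbf{a}}>R_1$, controlled by the joint decay of $g_m$ and $\mu_k$) and a Riemann-quadrature piece (controlled by the $C^1$-smoothness of $b\mapsto g_m(x+b)\mu_k(-b)$ against the mesh diameter). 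For the frequency estimate, the identity
\[
\hat\tau_k(\xi) = \hat g_m(\xi)\sum_{i=1}^K w_i\mu_k(-b_i)e^{-ib_i\cdot\xi}
\]
shows that $\hat g_m$ already furnishes the required $(1+|\xi|_{\mathbf{a}})^{-2M'}$-type decay in $\xi$, while the trigonometric sum is a Riemann approximation to $\hat\mu_k(\xi)$. Enlarging $R_1$ and refining the mesh brings both errors below $\varepsilon/2$ as well, which combined with Stage 1 yields \eqref{lostsomeone1} and \eqref{lostsomeone2}.

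The main obstacle is enforcing the weighted decay of the error in \emph{both} direct and frequency space simultaneously under truncation and discretization. In direct space the anisotropic quasi-triangle inequality must be used to relate $|x|_{\mathbf{a}}$ and $|x+b|_{\mathbf{a}}$ on the bounded truncation set, trading factors of $R_1$ against the decay gap $N''-N'$; in frequency the analogous gap $M''-M'$ has to absorb the Riemann quadrature constant, since the pointwise decay in $\xi$ is inherited from $\hat g_m$. Arranging $\varepsilon$, $m$, $R_1$, and the mesh diameter in the correct order so that \eqref{lostsomeone1} and \eqref{lostsomeone2} hold simultaneously is precisely the content of the analogous argument carried out in \cite{NR12}, to which the anisotropic homogeneous setting reduces once the above weighted estimates on $g_m\ast\mu_k$ and on the quadrature error are set up.
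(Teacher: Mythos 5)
Your outline is correct and coincides with the argument the paper relies on: the paper gives no proof of Proposition \ref{notmylove} but simply cites \cite{NR12}, whose proof (following the Kyriazis--Petrushev scheme) is exactly your two-stage plan of mollification by $g_m$ followed by truncation and Riemann-sum discretization of the convolution. The one detail worth making explicit in the direct-space part of Stage 1 is that one must take a geometric mean of the Lipschitz bound $|\mu_k(x-y)-\mu_k(x)|\le C|y|$ and the weighted bound $C(1+|x|_{\mathbf{a}})^{-2N''}$ on the region where $|y|_{\mathbf{a}}$ is small, since neither bound alone yields a factor that is simultaneously $o(1)$ in $m$ and decaying in $x$; this is the standard device and is covered by your deferral to \cite{NR12}.
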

We conclude this paper with the following direct consequence of  Theorem \ref{mualim2}, Lemma \ref{mualim3}, and Proposition \ref{notmylove}.
\begin{mycor}
	Choose $s\in\R$, $0<p<\infty$, and $0<q<\infty$.  Let $N', M'$ satisfy the conditions given by \eqref{eq:Nprime} and \eqref{eq:Mprime}, respectively, and pick $N''>N'>\nu$ and $M''>M'>\nu$.
	 If $g\in C^1(\R^d)\cap L_2(\R^d)$, $\hat{g}(0)\not=0$, satisfies
	\begin{align*}
	|g^{(\kappa)}(x)|&\le C(1+|x|)^{-2N''}, \, \, |\kappa|\le 1, \\
	|\hat{g}(\xi)|&\le C(1+|x|)^{-2M''},
	\end{align*}
	then there exists $K\in\N$ and $\psi_{k,n}(x):=e^{ix\cdot d_k}\sum_{i=1}^{K}a_{k,i}g(c_{k}x+b_{k,n,i})$,  $a_{k,i}\in\C$, $b_{k,n,i},d_k\in\R^d$, $c_{k}\in \R$, such that $\{S^{-1}\psi_{k,n}\}_{k\in J,n\in\Z^d}$ constitutes a frame for $\dot{F}^s_{p,q}(\tilde{h})$  and
	\begin{equation*}
	f=\sum_{k\in J ,n\in\Z^d}\langle f, S^{-1}\psi_{k,n}\rangle \psi_{k,n}
	\end{equation*}
	for all $f\in \dot{F}^s_{p,q}(\tilde{h})$ with convergence in $\mathcal{S}'\backslash \mathcal{P}$. A similar result holds for $\dot{M}^s_{p,q}(\tilde{h})$.  
\end{mycor}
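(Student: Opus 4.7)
The plan is to obtain the corollary by combining the perturbation machinery of Theorem \ref{generation1}, Proposition \ref{layla}, Theorem \ref{mualim2}, and Lemma \ref{mualim3} with the constructive approximation result of Proposition \ref{notmylove}, applied to the mother functions $\mu_k$ from the reference frame $\{\eta_{k,n}\}_{k\in J,n\in\Z^d}$ of \eqref{eq:eta}.

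First I would record that the $\mu_k$ satisfy the hypotheses of Proposition \ref{notmylove} uniformly in $k$. Indeed, the frequency-side estimate $|\hat{\mu}_k(\xi)|\le C\chi_{B_{\mathbf{a}}(0,2\delta)}(\xi)\le C(1+|\xi|_{\ca})^{-2M''}$ is immediate from \eqref{eq:decayofpartialmu}, and the direct-space decay plus the $C^1$ bound follow from \eqref{eq:estimatepartialmu} applied with $\gamma=0$ and $|\gamma|=1$. Given $\varepsilon>0$, Proposition \ref{notmylove} then produces integers $K,m\ge1$ and, for each $k\in J$, a function $\tau_k\in\Theta_{K,m}$, i.e.
\begin{equation*}
\tau_k(y)=\sum_{i=1}^{K} a_{k,i}\,g_m(y+b_{k,i})=C_g m^\nu\sum_{i=1}^{K} a_{k,i}\,g(D_{\mathbf{a}}(m)y+D_{\mathbf{a}}(m)b_{k,i}),
\end{equation*}
satisfying \eqref{lostsomeone1} and \eqref{lostsomeone2} with exponents $N',M'$ chosen to meet \eqref{eq:Nprime} and \eqref{eq:Mprime}.

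Next I would assemble the perturbed system in the form of the proposed $\psi_{k,n}$. Set
\begin{equation*}
\psi_{k,n}(x):=t_k^{\nu/2}\,\tau_k\!\left(A_k x-\tfrac{\pi}{a}n\right)e^{ix\cdot\xi_k}.
\end{equation*}
Using \eqref{lostsomeone1}, \eqref{lostsomeone2} together with the change of variables $A_k x-\tfrac{\pi}{a}n\leftrightarrow t_k(x-x_{k,n})$ (cf.\ the proof of Lemma \ref{prop:decayeta}), a direct calculation shows that $\{\psi_{k,n}\}_{k,n}$ satisfies the closeness estimates \eqref{bruce} and \eqref{springsteen} relative to $\{\eta_{k,n}\}_{k,n}$, with the same $\varepsilon$ (up to an absolute constant). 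Expanding $\tau_k$ and absorbing the dilation $D_{\mathbf{a}}(m)A_k=D_{\mathbf{a}}(mt_k)$ into the argument of $g$, the function $\psi_{k,n}(x)$ takes the advertised form $e^{ix\cdot d_k}\sum_{i=1}^{K} a_{k,i}\,g(c_k x+b_{k,n,i})$ with $d_k=\xi_k$, $c_k=mt_k$ (interpreted via the anisotropic dilation absorbed in the argument), and appropriate $b_{k,n,i}$, whose compact support is inherited from that of $g$ whenever the latter is chosen compactly supported.

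Now I would close the argument. Choose $\varepsilon\in(0,\varepsilon_0]$ smaller than the thresholds required in Theorem \ref{generation1} and Proposition \ref{layla}. Theorem \ref{generation1} then gives $\|\langle f,\psi_{k,n}\rangle\|_{\dot f^s_{p,q}(\tilde h)}\asymp\|f\|_{\dot F^s_{p,q}(\tilde h)}$, in particular for $s=0$, $p=q=2$ this shows $\{\psi_{k,n}\}_{k,n}$ is a frame for $L_2(\R^d)$. Hence Proposition \ref{layla} applies and provides boundedness of the change-of-frame matrix $\{\langle\eta_{k,n},S^{-1}\psi_{j,m}\rangle\}$ on $\dot f^s_{p,q}(\tilde h)$ (and the same for its transpose, by symmetry of the hypotheses). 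Theorem \ref{mualim2} then yields that $\{S^{-1}\psi_{k,n}\}_{k,n}$ is a frame for $\dot F^s_{p,q}(\tilde h)$, while Lemma \ref{mualim3} gives the reconstruction formula $f=\sum_{k,n}\langle f,S^{-1}\psi_{k,n}\rangle\psi_{k,n}$ in $\mathcal{S}'/\mathcal{P}$; the analogous statement for $\dot M^s_{p,q}(\tilde h)$ follows by the same chain of implications.

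The main obstacle is verifying uniformity in $k$ when applying Proposition \ref{notmylove}: the constants in the hypotheses on $\mu_k$ must be independent of $k$ so that a single pair $(K,m)$ works, and the approximants $\tau_k$ must yield \eqref{bruce}, \eqref{springsteen} with the same $\varepsilon$ on all scales $t_k$. This is where the structured form $\mu_k(\xi)=\phi_k(T_k\cdot)$ together with the scale-invariant bounds \eqref{eq:decayofpartialmu} and \eqref{eq:estimatepartialmu} is essential, since they render the hypotheses of Proposition \ref{notmylove} uniform, and the anisotropic dilation calculus then transfers the approximation errors from the mother function to $\psi_{k,n}$ with the correct weights $t_k^{\pm\nu/2}$ appearing in \eqref{bruce} and \eqref{springsteen}.
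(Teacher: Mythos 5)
Your proposal is correct and follows exactly the route the paper intends: it fills in the construction sketched just before Proposition \ref{notmylove} (approximating the mother functions $\mu_k$ uniformly, forming $\psi_{k,n}=t_k^{\nu/2}\tau_k(A_kx-\tfrac{\pi}{a}n)e^{ix\cdot\xi_k}$ so that \eqref{bruce} and \eqref{springsteen} hold) and then chains Theorem \ref{generation1}, Proposition \ref{layla}, Theorem \ref{mualim2}, and Lemma \ref{mualim3}, which is precisely how the paper presents the corollary as a direct consequence. Your remarks on uniformity in $k$ and on interpreting $c_k$ via the anisotropic dilation are sensible clarifications rather than deviations.
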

\appendix
\section{Some addition results and technical proofs}
This appendix contains a number of additional results and various technical proofs.
\begin{myprop}\label{johnlennon}%
Suppose that $A \in \textrm{ad}_{p,q}^s(\tilde{h})$. Then $A$ is bounded on
$\dot{f}^s_{p,q}(\tilde{h})$ and $\dot{m}^s_{p,q}(\tilde{h})$.
\end{myprop}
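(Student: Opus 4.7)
The plan is to adapt the Frazier--Jawerth template (in the form used in \cite[\S 3]{NR12}) to the present anisotropic homogeneous setting, paying attention to the fact that $\{t_j\}_{j\in J}$ is not bounded below. Fix a finitely supported sequence $c=\{c_{k,n}\}$ and set $r:=\min(1,p,q)\le 1$; by $r$-subadditivity,
\[
|(Ac)_{j,m}|^{r}\le \sum_{k\in J}\sum_{n\in\Z^d}|a_{(j,m)(k,n)}|^{r}|c_{k,n}|^{r}.
\]
Introduce the step function $\tilde g_k(x):=\sum_{n\in\Z^d}|c_{k,n}|\chi_{Q(k,n)}(x)$, which has bounded overlap uniformly in $k$, so that $\|c\|_{\dot f^{s}_{p,q}(\tilde h)}\asymp\|(\sum_{k\in J}(t_k^{s+\nu/2}\tilde g_k)^q)^{1/q}\|_{L_p}$.

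The key spatial estimate, proved by the standard averaging argument using $|Q(k,n)|\asymp t_k^{-\nu}$ and bounded overlap, is that for $x\in Q(j,m)$,
\[
\sum_{n\in\Z^d}\frac{|c_{k,n}|^{r}}{\bigl(1+\min(t_j,t_k)|x_{j,m}-x_{k,n}|_{\ca}\bigr)^{\nu+r\delta}}\;\lesssim\;\max(1,t_k/t_j)^{\nu}\bigl(M_{r}^{\ca}\tilde g_k(x)\bigr)^{r}.
\]
The exponent $\nu/r+\delta$ in Definition~\ref{def:almostdiagonal} is tuned precisely for this: the $\nu/r$ gives the correct $L^{r}$-scaling while the extra $\delta$ provides integrability. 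Plugging this back in, taking $r$-th roots, and combining with the scale and frequency-decay factors from Definition~\ref{def:almostdiagonal} gives the pointwise bound
\[
t_j^{s+\nu/2}|(Ac)_{j,m}|\chi_{Q(j,m)}(x)\;\lesssim\;\sum_{k\in J}\Omega_{jk}\,t_k^{s+\nu/2}M_{r}^{\ca}\tilde g_k(x),
\]
where $\Omega_{jk}$ carries a symmetric scale decay $\min((t_j/t_k)^{\nu/r+3\delta/2},(t_k/t_j)^{3\delta/2})$ (after absorbing the $\max(1,t_k/t_j)^{\nu/r}$ into the first factor of $c_{jk}^{\delta}$) together with the frequency decay $(1+\max(t_j,t_k)^{-1}|\xi_j-\xi_k|_{\ca})^{-\nu/r-\delta}$.

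For $\dot f^{s}_{p,q}(\tilde h)$, raising to the $q$-th power, summing in $j,m$, taking the $L_p$-norm, and applying Minkowski or $r$-subadditivity in $\ell_q(J)$ (depending on whether $q\ge 1$ or $q<1$) together with a Schur-type estimate $\sup_j\sum_k\Omega_{jk}^{r}+\sup_k\sum_j\Omega_{jk}^{r}<\infty$ reduces the problem to the Fefferman--Stein inequality (legal since $r<p,q$) applied to $\{t_k^{s+\nu/2}M_{r}^{\ca}\tilde g_k\}_k$; this is controlled by $\|c\|_{\dot f^{s}_{p,q}(\tilde h)}$. For $\dot m^{s}_{p,q}(\tilde h)$ the argument is simpler: the $\ell_p$-norm in $n$ is absorbed directly without invoking the maximal function, and one reduces straight to a discrete Young-type inequality on $J$ with kernel $\Omega_{jk}$ acting on $\ell_q(J)$.

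The main obstacle is the Schur estimate for $\Omega_{jk}$. Unlike the inhomogeneous setup \cite{NR12}, here the ratios $t_k/t_j$ can be arbitrarily small or large, so summability has to be extracted from the two-sided $\min$-structure of Definition~\ref{def:almostdiagonal} together with the moderation hypothesis \eqref{eq:ekstrabetingelse}. The natural strategy is to group $k$ dyadically by $\log(t_k/t_j)$ and, within each block, use the counting properties of the structured admissible covering (controlled by the $h_2^{1+\beta}$-moderation) to bound the number of frequency centres $\xi_k$ with $|\xi_j-\xi_k|_{\ca}\le R\max(t_j,t_k)$; the remaining sums then reduce to convergent anisotropic integrals of $(1+|\xi|_{\ca})^{-\nu/r-\delta}$ multiplied by a geometric sum over the dyadic index.
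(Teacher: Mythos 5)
Your overall strategy coincides with the paper's: a spatial maximal-function estimate over $n$ (the paper's Lemma \ref{rock}), a Schur-type summability estimate over $k$ in frequency (the paper's Lemma \ref{pinball}, which is indeed where the moderation hypothesis \eqref{eq:ekstrabetingelse} and the disjointness of the structured covering enter), and the vector-valued Fefferman--Stein inequality. The difference is organisational: you apply $r$-subadditivity once at the outset to treat all $p,q$ uniformly, whereas the paper first treats $p,q>1$, then shrinks $r$ for the borderline cases, and uses a power trick (replacing $\mathbf{A}$ by $\tilde{\mathbf{A}}$ and $s_{k,n}$ by $v_{k,n}=|s_{k,n}|^qt_k^{\nu q/2-\nu/2}$) for $q<1$. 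Your reorganisation is viable in principle, but two steps are wrong as written.

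First, the ``pointwise bound'' you state after ``taking $r$-th roots'' does not follow. What $r$-subadditivity plus your spatial estimate actually give is
\begin{equation*}
\bigl(t_j^{s+\nu/2}|(Ac)_{j,m}|\bigr)^{r}\;\lesssim\;\sum_{k\in J}\Omega_{jk}^{r}\,\bigl(t_k^{s+\nu/2}M_r^{\ca}\tilde g_k(x)\bigr)^{r},\qquad x\in Q(j,m),
\end{equation*}
and since $r\le1$ one has $\bigl(\sum_k a_k^{r}\bigr)^{1/r}\ge\sum_k a_k$, so the $r$-th root of the right-hand side \emph{dominates} $\sum_k\Omega_{jk}\,t_k^{s+\nu/2}M_r^{\ca}\tilde g_k(x)$ rather than being dominated by it. You must keep the estimate in its $r$-th-power form (which your Schur condition on $\Omega_{jk}^{r}$ suggests you intended) and apply H\"older with exponent $q/r\ge1$ against $\sup_j\sum_k\Omega_{jk}^{r}<\infty$ before summing in $j$; this is exactly the role of Lemma \ref{pinball} in the paper. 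Second, the parenthetical ``legal since $r<p,q$'' for Fefferman--Stein is false: with $r=\min(1,p,q)$ one has $r=p$ whenever $p\le1$ and $p\le q$, and the maximal inequality requires $r<p$ strictly. This is precisely why the paper's proof replaces $r$ by some $\tilde r<r$ and $\delta$ by $\tilde\delta$ with $\nu/r+\delta/2\ge\nu/\tilde r+\tilde\delta/2$; the slack $\delta>0$ in Definition \ref{def:almostdiagonal} exists to make this possible, and your argument needs the same adjustment. Finally, the Schur estimate itself is only sketched in your proposal; your dyadic-grouping outline is consistent with the paper's proof of Lemma \ref{pinball}, but it is the one genuinely new difficulty of the homogeneous setting and would need to be carried out in full.
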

\begin{proof}[Proof of Proposition \ref{johnlennon}]
We only prove the result for $\dot{f}^s_{p,q}(\tilde{h})$ when $q<\infty$ as
$q=\infty$ follows in a similar way with $l_q$ replaced by $l_\infty$, and
the proof for $\dot{m}^s_{p,q}(\tilde{h})$ is similar to the one for $\dot{f}^s_{p,q}(\tilde{h})$. Let $s:=\{s_{k,n}\}_{k\in J, n\in\Z^d}\in
\dot{f}^s_{p,q}(\tilde{h})$ and assume for now that $p,q> 1$. We write
$\mathbf{A}:=\mathbf{A}_0+\mathbf{A}_1$ such that
\begin{equation*}
(\mathbf{A}_0s)_{(j,m)}\!=\!\!\sum_{k:t_k\ge t_j}\sum_{n\in\Z^d}
a_{(j,m)(k,n)}s_{k,n}\hspace{0.2cm}\textrm{and}\hspace{0.2cm}(\mathbf{A}_1s)_{(j,m)}\!=\!\!\sum_{k:t_k<
t_j}\sum_{n\in\Z^d} a_{(j,m)(k,n)}s_{k,n}.
\end{equation*}
By using Lemma \ref{rock} we have
\begin{align*}
|(\mathbf{A}_0s)_{(j,m)}|&\le C\sum_{k:t_k\ge
t_j}\left(\frac{t_k}{t_j}\right)^{s+\frac{\nu}{2}-\frac{\nu}{r}-\frac{\delta}{2}}
c_{jk}^\delta
\sum_{n\in\Z^d}\frac{|s_{k,n}|}{\left(1+t_j\left|x_{k,n}-x_{j,m}\right|_B\right)^{\frac{\nu}{r}+\delta}}\\
&\le C \sum_{k:t_k\ge
t_j}\left(\frac{t_k}{t_j}\right)^{s+\frac{\nu}{2}-\frac{\delta}{2}}
c_{jk}^\delta M_r^{\mathbf{a}}\Big(\sum_{n\in
\Z^d}|s_{k,n}|\chi_{Q(k,n)}\Big)(x),
\end{align*}
for $x\in Q(j,m)$, where $t_k$ is defined in \eqref{eq:tj}, $x_{k,n}$ in \eqref{eq:x}, $Q(j,m)$ in \eqref{eq:pointsets} and $M_r^{\mathbf{a}}$ in \eqref{eq:maximaloperator}. It then follows by H\"{o}lder's inequality and Lemma \ref{pinball} below that
\begin{align*}
\sum_{m\in\Z^d} |(\mathbf{A}_0&s)_{(j,m)}\chi_{Q(j,m)}|^q \le
C\bigg(\sum_{k:t_k\ge
t_j}\left(\frac{t_k}{t_j}\right)^{s+\frac{\nu}{2}}
c_{jk}^\delta M_r^{\mathbf{a}}\Big(\sum_{n\in
\Z^d}|s_{k,n}|\chi_{Q(k,n)}\Big)\bigg)^q \\
&\le C\sum_{k:t_k\ge
t_j}c_{jk}^\delta \bigg(\left(\frac{t_k}{t_j}\right)^{s+\frac{\nu}{2}}M_r^{\mathbf{a}}\Big(\sum_{n\in
\Z^d}|s_{k,n}|\chi_{Q(k,n)}\Big)\bigg)^q \bigg(\sum_{i:t_i\ge t_j}
c_{ji}^\delta \bigg)^{q-1}\\
&\le C\sum_{k:t_k\ge
t_j}c_{jk}^\delta\bigg(\left(\frac{t_k}{t_j}\right)^{s+\frac{\nu}{2}}M_r^{\mathbf{a}}\Big(\sum_{n\in
\Z^d}|s_{k,n}|\chi_{Q(k,n)}\Big)\bigg)^q.
\end{align*}
We obtain
\begin{align*}
\left\|\mathbf{A}_0s\right\|_{\dot{f}^s_{p,q}(\tilde{h})} &\le
C\bigg\|\bigg(\sum_{j\in\Z^d}\sum_{k:t_k\ge t_j}c_{jk}^\delta \bigg(t_k^{s+\frac{\nu}{2}} {M}^\mathbf{a}_r\Big(\sum_{n\in
\Z^d}|s_{k,n}|\chi_{Q(k,n)}\Big)\bigg)^q\bigg)^{1/q}\bigg\|_{L_p}\\
&\le C
\bigg\|\bigg(\sum_{k\in J}\bigg(t_k^{s+\frac{\nu}{2}}{M}^\mathbf{a}_r\Big(\sum_{n\in
\Z^d}|s_{k,n}|\chi_{Q(k,n)}\Big)\bigg)^q\bigg)^{1/q}\bigg\|_{L_p}.
\end{align*}
Using the vector-valued Fefferman-Stein maximal inequality
\eqref{eq:feffermanstein}, we arrive at
\begin{equation*}
\left\|\mathbf{A}_0s\right\|_{\dot{f}^s_{p,q}(\tilde{h})}\le
C\Big\|\Big(\sum_{k\in J,n\in
\Z^d}(t_k^{s+\frac{\nu}{2}}|s_{k,n}|)^q\chi_{Q(k,n)}\Big)^{1/q}\Big\|_{L_p}=C\left\|s\right\|_{\dot{f}^s_{p,q}(\tilde{h})}.
\end{equation*}
The corresponding estimate for $\mathbf{A}_1$ follows from
the same type of arguments resulting in both $\mathbf{A}_0$ and
$\mathbf{A}_1$ being bounded on $\dot{f}^s_{p,q}(\tilde{h})$ and thereby
$\mathbf{A}$. For the cases $q=1$ and $p\le 1, q>1$ choose $0<\tilde{r}<r$ and $0<\tilde{\delta}<\delta$ such that $\nu/r+\delta/2\ge\nu/\tilde{r}+\tilde{\delta}/2$ and repeat the argument with $r:=\tilde{r}$, and $\delta:=\tilde{\delta}$. The case $q<1$ follows from first observing that
\begin{equation*}
\tilde{\mathbf{A}}:=\{\tilde{a}_{(j,m)(k,n)}\}:=\bigg\{|a_{(j,m)(k,n)}|^q\left(\frac{t_k}{t_j}\right)^{\frac{\nu}{2}-\frac{\nu
q}{2}}\bigg\}
\end{equation*}
is almost diagonal on $\dot{f}^{sq}_{\frac{p}{q},1}(\tilde{h})$. Furthermore, if
$v:=\{v_{k,n}\}:=\{|s_{k,n}|^qt_k^{\frac{\nu q}{2}-\frac{\nu}{2}}\}$
we have
\begin{align*}
\|v\|^{\frac{1}{q}}_{\dot{f}^{sq}_{\frac{p}{q},1}(\tilde{h})} =
\Big\|\Big(\sum_{k\in J, n\in\Z^d}\big(t_k^{s+\frac{\nu}{2}}|s_{k,n}|\big)^q \chi_{Q(k,n)}\Big)^{1/q}\Big\|_{L_p}=
\|s\|_{\dot{f}^{s}_{p,q}(\tilde{h})}.
\end{align*}
Before we can put these two observations into use we need that
\begin{align*}
|(\mathbf{A}s)_{(j,m)}|^q\le\sum_{k\in J}\sum_{n\in\Z^d}|a_{(j,m)(k,n)}|^q|s_{k,n}|^q
=t_j^{\frac{\nu}{2}-\frac{\nu
q}{2}}\sum_{k\in J}\sum_{n\in\Z^d}\tilde{a}_{(j,m)(k,n)}v_{k,n}.
\end{align*}
We then have
\begin{align*}
\|\mathbf{A}s\|_{\dot{f}^s_{p,q}(\tilde{h})} \le
\|\tilde{\mathbf{A}}v\|^{\frac{1}{q}}_{\dot{f}^{s q}_{\frac{p}{q},1}(\tilde{h})}
\le C \|v\|^{\frac{1}{q}}_{\dot{f}^{s q}_{\frac{p}{q},1}(\tilde{h})}=
C\|s\|_{\dot{f}^{s}_{p,q}(\tilde{h})}.
\end{align*}
\end{proof}

\begin{mylemma}\label{rock}%
Suppose that $0 < r \le 1$ and $N>\nu/r$. Then for any sequence
$\{s_{k,n}\}_{k\in J, n\in\Z^d}\subset\C$, and for $x\in Q(j,m)$, we have
\begin{align}
\sum_{n\in
\Z^d}\frac{|s_{k,n}|}{(1+\min(t_k,t_j)|x_{k,n}-x_{j,m}|_B)^N} \le &C
\max\left(\frac{t_k}{t_j},1\right)^{\frac{\nu}{r}}\notag \\
&\phantom{C}\times {M}^\mathbf{a}_r\Big(\sum_{n\in
\Z^d}|s_{k,n}|\chi_{Q(k,n)}\Big)(x),\label{setty}
\end{align}
with $t_k$ defined in \eqref{eq:tj}, $x_{k,n}$ in \eqref{eq:x}, and $Q(j,m)$ in \eqref{eq:pointsets}.
\end{mylemma}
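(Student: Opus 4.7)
The plan is a dyadic decomposition in the quasi-distance $|x_{k,n}-x_{j,m}|_{\mathbf{a}}$ combined with the embedding $\ell^{r}\hookrightarrow\ell^{1}$ (valid since $r\le 1$) in order to convert the discrete weighted sum into an $L^{r}$-integral of $f:=\sum_{n}|s_{k,n}|\chi_{Q(k,n)}$ over an anisotropic ball centred at $x$, which is then dominated by $M_{r}^{\mathbf{a}}f(x)$. Writing $t_{\ast}:=\min(t_{k},t_{j})$ and $z:=x_{j,m}$, I would first partition $\Z^{d}$ into the anisotropic annuli $A_{0}:=\{n:|x_{k,n}-z|_{\mathbf{a}}<1/t_{\ast}\}$ and $A_{\ell}:=\{n:2^{\ell-1}/t_{\ast}\le|x_{k,n}-z|_{\mathbf{a}}<2^{\ell}/t_{\ast}\}$ for $\ell\ge 1$. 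On $A_{\ell}$ the denominator $(1+t_{\ast}|x_{k,n}-z|_{\mathbf{a}})^{N}$ is at least $c\,2^{N\ell}$, reducing the task to estimating $\sum_{\ell\ge 0}2^{-N\ell}\sum_{n\in A_{\ell}}|s_{k,n}|$, and applying $\ell^{r}\hookrightarrow\ell^{1}$ inside the annulus replaces the inner sum by $\bigl(\sum_{n\in A_{\ell}}|s_{k,n}|^{r}\bigr)^{1/r}$.

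To recast the $\ell^{r}$-mass as an integral, I would use two structural facts about $\{Q(k,n)\}_{n}$: each $Q(k,n)$ is an affine image of $A_{k}^{-1}B_{\mathbf{a}}(0,1)$ with Lebesgue measure $\asymp t_{k}^{-\nu}$, while the family has uniform overlap bounded by $n_{0}$. The latter, via Jensen's inequality applied pointwise to the at most $n_{0}$ contributing indices, yields $\sum_{n}|s_{k,n}|^{r}\chi_{Q(k,n)}\le n_{0}^{1-r}f^{r}$. Integrating and using the measure estimate, $\sum_{n\in A_{\ell}}|s_{k,n}|^{r}\lesssim t_{k}^{\nu}\int_{\Omega_{\ell}}f^{r}$ where $\Omega_{\ell}:=\bigcup_{n\in A_{\ell}}Q(k,n)$.

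The geometric heart is then to contain $\Omega_{\ell}$ inside a single ball centred at $x$. Since $x\in Q(j,m)$ gives $|x-z|_{\mathbf{a}}\lesssim 1/t_{j}\le 1/t_{\ast}$ and each $Q(k,n)$ has anisotropic radius $1/t_{k}\le 1/t_{\ast}$, two applications of the quasi-triangle inequality produce $\Omega_{\ell}\subset B_{\mathbf{a}}(x,C\cdot 2^{\ell}/t_{\ast})$, of measure $\asymp(2^{\ell}/t_{\ast})^{\nu}$. Combining, $\sum_{n\in A_{\ell}}|s_{k,n}|^{r}\lesssim 2^{\ell\nu}(t_{k}/t_{\ast})^{\nu}\bigl(M_{r}^{\mathbf{a}}f(x)\bigr)^{r}$; taking the $(1/r)$-th power, noting $t_{k}/t_{\ast}=\max(t_{k}/t_{j},1)$, and summing the geometric series $\sum_{\ell\ge 0}2^{(\nu/r-N)\ell}$, which converges precisely because $N>\nu/r$, delivers \eqref{setty}.

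The main obstacle is the anisotropic bookkeeping: keeping the quasi-triangle constant $K$ consistent when absorbing $|x-z|_{\mathbf{a}}$ into the annular scale $2^{\ell}/t_{\ast}$ (especially at $\ell=0$), and verifying that, even when $t_{k}$ is much larger than $t_{j}$, the small cells $Q(k,n)$ (of anisotropic radius $1/t_{k}\ll 1/t_{\ast}$) stay inside the coarser ball $B_{\mathbf{a}}(x,C\cdot 2^{\ell}/t_{\ast})$. This regime $t_{k}>t_{j}$ is also where the factor $(t_{k}/t_{\ast})^{\nu}=(t_{k}/t_{j})^{\nu}$ appears and, after the $(1/r)$-th power, becomes the $\max(t_{k}/t_{j},1)^{\nu/r}$ prefactor on the right-hand side.
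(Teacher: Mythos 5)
Your proposal is correct and follows essentially the same route as the paper's proof: dyadic annuli at scale $\min(t_k,t_j)^{-1}$ around $x_{j,m}$, the embedding $\ell^r\hookrightarrow\ell^1$, conversion of the $\ell^r$-mass to an integral of $\big(\sum_n|s_{k,n}|\chi_{Q(k,n)}\big)^r$ over a ball $B_{\mathbf{a}}(x,C2^{i}/\min(t_k,t_j))$ using the measure $|Q(k,n)|\asymp t_k^{-\nu}$ and the bounded overlap $n_0$, and finally the geometric series convergent because $N>\nu/r$, with the factor $\max(t_k/t_j,1)^{\nu/r}$ arising exactly as you describe from the mismatch between the cell size $t_k^{-\nu}$ and the ball size $(2^i/t_j)^\nu$ when $t_k>t_j$.
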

\begin{proof}
Without loss of generality we may assume $x_{j,m}=0$ and begin by
considering the case $t_k\le t_j$. We define the sets,
\begin{align*}
S_0&=\{n\in \Z^d :t_k|x_{k,n}|_B\le 1\},\\
S_i&=\{n\in \Z^d :2^{i-1}<t_k|x_{k,n}|_B\le 2^i\},\,\,\, i\ge 1.
\end{align*}
Choose $x\in Q(j,m)$. There exists $C_1>0$ such that $\cup_{n\in S_i}Q(k,n) \subset B_{\mathbf{a}}(x,C_12^{i}t_k^{-1})$, and by using $\int \chi_{Q(k,n)}=\kappa_d^{\mathbf{a}}t_k^{-\nu}$, we get
\begin{align*}
\sum_{n\in S_i}\frac{|s_{k,n}|}{(1+t_k|x_{k,n}|)^N} & \le C 2^{-iN}\sum_{n\in S_i}|s_{k,n}|\le C 2^{-iN}\Big(\sum_{n\in S_i}|s_{k,n}|^r\Big)^{\frac{1}{r}} \\
 & \le C 2^{-iN}\bigg(\frac{t_k^{\nu}}{\kappa_d^{\mathbf{a}}}\int_{B_{\mathbf{a}}(x,C_12^{i}t_k^{-1})}\sum_{n\in S_i}|s_{k,n}|^r\chi_{Q(k,n)}\bigg)^{\frac{1}{r}}.
\end{align*}
Hence by the definition of the maximal operator \eqref{eq:maximaloperator} we have
\begin{align*}
\sum_{n\in S_i}\frac{|s_{k,n}|}{(1+t_k|x_{k,n}|)^N} &\le C
2^{i(\frac{\nu}{r}-N)}\bigg(\frac{t_k^{\nu}}{2^{i\nu}\kappa_d^{\mathbf{a}}}\int_{B_{\mathbf{a}}(x,C_12^{i}t_k^{-1})}\sum_{n\in
S_i}|s_{k,n}|^r\chi_{Q(k,n)}\bigg)^{\frac{1}{r}}\\
&\le C 2^{i(\frac{\nu}{r}-N)}{M}^\mathbf{a}_r\Big(\sum_{n\in
\Z^d}|s_{k,n}|\chi_{Q(k,n)}\Big)(x)
\end{align*}
by using $\sum_{n\in \Z^d}\chi_{Q(k,n)}\le n_0$. Summing over $i\ge
0$ and using $N>\nu/r$ gives \eqref{setty}. For the second case, $t_k
> t_j$, we redefine the sets,
\begin{align*}
S_0&=\{n\in \Z^d :t_j|x_{k,n}|_B\le 1\}\\
S_i&=\{n\in \Z^d :2^{i-1}<t_j|x_{k,n}|_B\le 2^i\}, i\ge 1.
\end{align*}
As before we have
\begin{align*}
\sum_{n\in S_i}\frac{|s_{k,n}|}{(1+t_j|x_{k,n}|)^M}\le& C
2^{-iN}\bigg(\frac{t_{k}^{\nu}}{\kappa_d^{\mathbf{a}}}\int_{B_{\mathbf{a}}(x,C_12^{i}t_j^{-1})}\sum_{n\in
S_i}|s_{k,n}|^r\chi_{Q(k,n)}\bigg)^{\frac{1}{r}} \\
\le& C
2^{i(\frac{\nu}{r}-N)}\left(\frac{t_k}{t_j}\right)^{\frac{\nu}{r}}M^\mathbf{a}_r\Big(\sum_{n\in
\Z^d}|s_{k,n}|\chi_{Q(k,n)}\Big)(x).
\end{align*}
Summing over $i\ge0$ again gives \eqref{setty}.\\
\end{proof}
\begin{mylemma}\label{pinball}%
Assume \eqref{eq:ekstrabetingelse} is satisfied, and let $\delta>0$. There exists $C>0$ independent of $k$ such that
\begin{equation*}
\sum_{j\in \Z^d} \min\bigg(\bigg(\frac{t_j}{t_k}\bigg)^{\nu},\bigg(\frac{t_k}{t_j}\bigg)^{\delta}\bigg)
(1+\max(t_k,t_j)^{-1}|\xi_j-\xi_k|_{\mathbf{a}})^{-\nu-\delta}\le C,
\end{equation*}
with $t_k$ and $\xi_k$ defined as in Definition \ref{def:almostdiagonal}.
\end{mylemma}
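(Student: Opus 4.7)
The plan is to split the sum according to whether $t_j \le t_k$ or $t_j > t_k$ and, in each case, to exploit the disjointness of the inner balls $\{B_{\mathbf{a}}(\xi_j,\delta' t_j)\}_{j\in J}$ guaranteed by Lemma \ref{Lemma5} via a standard envelope-and-integration argument. Write $S_k = S_k^{(1)} + S_k^{(2)}$ with
\begin{gather*}
S_k^{(1)} := \sum_{j\in J,\, t_j\le t_k} \Bigl(\tfrac{t_j}{t_k}\Bigr)^{\nu}(1+t_k^{-1}|\xi_j-\xi_k|_{\mathbf{a}})^{-\nu-\delta}, \\
S_k^{(2)} := \sum_{j\in J,\, t_j> t_k} \Bigl(\tfrac{t_k}{t_j}\Bigr)^{\delta}(1+t_j^{-1}|\xi_j-\xi_k|_{\mathbf{a}})^{-\nu-\delta}.
\end{gather*}

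For $S_k^{(1)}$, the key observation is that $|B_{\mathbf{a}}(\xi_j,\delta' t_j)| = \kappa_d^{\mathbf{a}}(\delta')^{\nu} t_j^{\nu}$, so $(t_j/t_k)^\nu \asymp t_k^{-\nu}|B_{\mathbf{a}}(\xi_j,\delta' t_j)|$. Because $t_j \le t_k$, the quasi-triangle inequality yields, for every $\xi \in B_{\mathbf{a}}(\xi_j,\delta' t_j)$,
\[
1+t_k^{-1}|\xi-\xi_k|_{\mathbf{a}} \le C_1\bigl(1+t_k^{-1}|\xi_j-\xi_k|_{\mathbf{a}}\bigr),
\]
hence $(1+t_k^{-1}|\xi_j-\xi_k|_{\mathbf{a}})^{-\nu-\delta} \le C\inf_{\xi\in B_{\mathbf{a}}(\xi_j,\delta' t_j)}(1+t_k^{-1}|\xi-\xi_k|_{\mathbf{a}})^{-\nu-\delta}$. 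Summing over the pairwise disjoint balls and using the anisotropic change of variables $\xi = \xi_k+D_{\mathbf{a}}(t_k)\eta$ (so that $d\xi = t_k^{\nu}d\eta$) gives
\[
S_k^{(1)} \le C t_k^{-\nu}\int_{\mathbb{R}^d}(1+t_k^{-1}|\xi-\xi_k|_{\mathbf{a}})^{-\nu-\delta}\,d\xi = C\int_{\mathbb{R}^d}(1+|\eta|_{\mathbf{a}})^{-\nu-\delta}\,d\eta \le C,
\]
the last integral being finite since $\delta>0$.

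For $S_k^{(2)}$ I would dyadically decompose $J_i := \{j\in J : 2^i t_k \le t_j < 2^{i+1}t_k\}$ for $i \ge 0$. On each $J_i$ one has $(t_k/t_j)^{\delta} \le 2^{-i\delta}$, while the inner balls $\{B_{\mathbf{a}}(\xi_j,\delta' t_j)\}_{j\in J_i}$ are disjoint with measure $\asymp (2^i t_k)^{\nu}$. Running the same envelope argument at the single scale $2^i t_k$ produces
\[
\sum_{j\in J_i}(1+(2^i t_k)^{-1}|\xi_j-\xi_k|_{\mathbf{a}})^{-\nu-\delta} \le C,
\]
uniformly in $i$. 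Summing the geometric series $\sum_{i\ge 0} 2^{-i\delta}$ then yields $S_k^{(2)} \le C$, and the two bounds combine to $S_k \le C$.

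The main obstacle is purely bookkeeping: verifying that the quasi-triangle envelope constants and the anisotropic scaling factor $|\det D_{\mathbf{a}}(t_k)| = t_k^{\nu}$ are uniform in $k$. The structural ingredient doing all the work is the disjointness of the inner balls from Lemma \ref{Lemma5}. Notably, the standing hypothesis \eqref{eq:ekstrabetingelse} does not appear to be invoked explicitly in the argument above; it is presumably included for consistency with the rest of Section \ref{sec:almostdiagonal}.
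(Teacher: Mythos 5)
Your proof is correct, but it follows a genuinely different route from the paper's. The paper dyadically decomposes the index set by \emph{frequency distance}, $S_i=\{j: 2^{i-1}\rho_1 t_k<|\xi_j-\xi_k|_{\mathbf a}\le 2^i\rho_1 t_k\}$, and on each shell counts the balls $B_{\mathbf a}(\xi_j,t_j)$ by packing them into $B_{\mathbf a}(\xi_k,C_1 2^i t_k)$ via the bounded overlap of the admissible covering; for the part of the sum with $t_j>t_k$ this containment requires the a priori bound $t_j\le R_1 2^i t_k$, which is exactly where \eqref{eq:ekstrabetingelse} enters. You instead decompose by \emph{scale}, splitting at $t_j\le t_k$ versus $t_j>t_k$ (which is precisely where the $\min$ switches branches) and, in the second case, layering $J_i=\{j: t_j\asymp 2^i t_k\}$; since you always normalise the tail $(1+\max(t_k,t_j)^{-1}|\xi_j-\xi_k|_{\mathbf a})^{-\nu-\delta}$ at the scale $\max(t_k,t_j)$, which dominates the radius $\delta' t_j$ of the disjoint inner balls from Lemma \ref{Lemma5}, the envelope-and-integration step closes with constants depending only on $K$, $\delta'$ and $\int(1+|\eta|_{\mathbf a})^{-\nu-\delta}\,\d\eta<\infty$, and the prefactor $\min((t_j/t_k)^\nu,(t_k/t_j)^\delta)$ serves once as the volume ratio $t_k^{-\nu}|B_{\mathbf a}(\xi_j,\delta' t_j)|$ and once as the convergent geometric factor $2^{-i\delta}$. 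Your closing observation is accurate: your argument never invokes \eqref{eq:ekstrabetingelse}, so the lemma as stated holds under the covering hypotheses of Lemma \ref{Lemma5} alone, whereas the hypothesis is genuinely load-bearing in the paper's own proof (and elsewhere in Section \ref{sec:almostdiagonal}). What the paper's approach buys is a shell structure in $|\xi_j-\xi_k|_{\mathbf a}$ that is reused in Lemma \ref{life2}; what yours buys is a shorter, self-contained proof with one fewer hypothesis.
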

\begin{proof}
 We begin by dividing the indices into sets,
\begin{align*}
S_0&=\{j\in J:|\xi_j-\xi_k|_{\mathbf{a}}\le \rho_1 t_k\} \\
S_i&=\{j\in J:2^{i-1}\rho_1 t_k <|\xi_j-\xi_k|_{\mathbf{a}}\le 2^i \rho_1 t_k\},\,\,\, i\ge 1,
\end{align*}
with $\rho_1$ defined in \eqref{eq:ekstrabetingelse}. Next, we divide the sum even further
by first looking at $t_k\ge t_j$. For such $j\in S_i$,  we have $B_{\mathbf{a}}(\xi_j,t_j)
\subset B_{\mathbf{a}}(\xi_k,C_12^i t_k)$ which follows from using \eqref{eq:ekstrabetingelse}:
\begin{align*}
|\xi_k-\xi|_{\mathbf{a}} \le K (|\xi_k-\xi_j|_{\mathbf{a}}+|\xi_j-\xi|_{\mathbf{a}}) \leq& K(2^i\rho_1 t_k+t_j)\\
\leq& K(2^i\rho_1 t_k+R_1 2^i t_k)\\
=& C_1 2^i t_k,
\end{align*}
for $\xi\in B_{\mathbf{a}}(\xi_j,t_j)$. By using that the covering $\{B_{\mathbf{a}}(\xi_j,t_j)\}_j$ is admissible, we get
\begin{align*}
\sum_{\substack{j\in S_i \\ j: t_j\le t_k}} \bigg(\frac{t_j}{t_k}\bigg)^{\nu}& (1+t_k^{-1}|\xi_j-\xi_k|_{\mathbf{a}})^{-\nu-\delta}\\
\le & C2^{-i(\nu+\delta)} \sum_{\substack{j\in S_i \\ j: t_j\le t_k}} \bigg(\frac{t_j}{t_k}\bigg)^{\nu}\frac{1}{\kappa_d^{\mathbf{a}} t_j^\nu}\int_{B_{\mathbf{a}}(\xi_j,t_j)}\chi_{B_{\mathbf{a}}(\xi_j,t_j)}(\xi)\d \xi\\
\le & C2^{-i(\nu+\delta)}\frac{1}{\kappa_d^{\mathbf{a}} t_k^\nu}\int_{B_{\mathbf{a}}(\xi_k,C_12^i t_k)}\sum_{\substack{j\in S_i \\ j: t_j\le t_k}}\chi_{B_{\mathbf{a}}(\xi_j,t_j)}(\xi)\d \xi\\
\le & C2^{-i\delta}.
\end{align*}
Summing over $i$ gives the lemma for the $t_k\ge t_j$ part of the sum.
 In a similar way, the result for $t_k<t_j$ follows by using
\begin{align*}
\sum_{\substack{j\in S_i \\ j: t_j> t_k}} \bigg(\frac{t_k}{t_j}\bigg)^{\delta} (1+t_j^{-1}|\xi_j-\xi_k|_{\mathbf{a}})^{-\nu-\delta}
\le\sum_{\substack{j\in S_i \\ j: t_j> t_k}} \bigg(\frac{t_j}{t_k}\bigg)^{\nu} (1+t_k^{-1}|\xi_j-\xi_k|_{\mathbf{a}})^{-\nu-\delta}.
\end{align*}
\end{proof}

\subsection{Proof of Proposition \ref{prop:lukket}}
It turns out that the class of almost diagonal matrices is closed under composition. We now give a proof of this fact. For notational convenience, we let
\begin{align}
w^{s,\delta}_{(j,m)(k,n)}:=&\bigg(\frac{t_k}{t_j}\bigg)^{s+\frac{\nu}{2}}\min\bigg(\bigg(\frac{t_j}{t_k}\bigg)^{\frac{\nu}{r}+\frac{\delta}{2}},
\bigg(\frac{t_k}{t_j}\bigg)^{\frac{\delta}{2}}\bigg)c_{jk}^\delta \notag \\
& \phantom{C}\times
(1+\min(t_k,t_j)|x_{k,n}-x_{j,m}|_B)^{-\frac{\nu}{r}-\delta},\notag
\end{align}
where we have used the notation from Definition \ref{def:almostdiagonal}. The following result holds.
\begin{mylemma}\label{poison}%
Let $s\in \R$, $0<r\le 1$ and $\delta>0$. We then have
\begin{align*}
\sum_{i\in J,l\in \Z^d}
w^{s,\delta}_{(j,m)(i,l)}w^{s,\delta}_{(i,l)(k,n)}\le C
w^{s,\delta/2}_{(j,m)(k,n)},
\end{align*}
\end{mylemma}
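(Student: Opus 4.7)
The plan is to follow the standard template for composition of almost diagonal matrices, adapted to the homogeneous anisotropic setting. The first observation is that the $s$-dependent prefactors telescope: $(t_i/t_j)^{s+\nu/2}(t_k/t_i)^{s+\nu/2}=(t_k/t_j)^{s+\nu/2}$, matching exactly the corresponding factor in the target weight $w^{s,\delta/2}_{(j,m)(k,n)}$. This reduces the problem to a symmetric bookkeeping estimate that no longer involves $s$, and which can be attacked by splitting the sum over $i$ according to the position of $t_i$ relative to $t_j$ and $t_k$.

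By symmetry in $(j,m)\leftrightarrow(k,n)$ I would assume without loss of generality that $t_j\le t_k$, and partition the index set $J$ into the three natural regimes
$$J_1=\{i:t_i\le t_j\le t_k\},\quad J_2=\{i:t_j\le t_i\le t_k\},\quad J_3=\{i:t_j\le t_k\le t_i\}.$$
On each $J_\ell$ the minima appearing in $M_{ji},M_{ik},c^\delta_{ji},c^\delta_{ik}$ collapse to explicit monomials in the ratios $t_i/t_j$ and $t_k/t_i$; the goal in each regime is to verify that, after the $l$-sum and the $i$-sum are carried out, the surviving powers of $t_j/t_k$ combine to recover the weight on the right-hand side.

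For the inner sum over $l\in\Z^d$ with $i$ fixed, I would apply a discrete convolution estimate in the spirit of Lemma \ref{rock} and the quasi-triangle inequality for $|\cdot|_\ca$,
$$\sum_{l\in\Z^d}(1+t_a|x_{j,m}-x_{i,l}|_\ca)^{-\nu/r-\delta}(1+t_b|x_{i,l}-x_{k,n}|_\ca)^{-\nu/r-\delta}\le C\,\bigl(\tfrac{\max(t_a,t_b)}{\min(t_a,t_b)}\bigr)^{\nu/r}(1+\min(t_a,t_b)|x_{j,m}-x_{k,n}|_\ca)^{-\nu/r-\delta/2},$$
with $t_a=\min(t_j,t_i)$ and $t_b=\min(t_i,t_k)$. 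The factor $(\max/\min)^{\nu/r}$ that arises here will be absorbed into the exponents of the ratios $t_i/t_j$, $t_i/t_k$ already present in $M$ and $c^\delta$, which is precisely where the loss from $\delta$ to $\delta/2$ is spent. For the outer sum over $i\in J$, I would invoke Lemma \ref{pinball} to control the frequency-localisation factor $(1+\max(t_i,t_j)^{-1}|\xi_i-\xi_j|_\ca)^{-\nu/r-\delta}$ (and its analogue for $i,k$), using the quasi-triangle inequality to transfer the estimate from $|\xi_i-\xi_j|_\ca$ and $|\xi_i-\xi_k|_\ca$ to $|\xi_j-\xi_k|_\ca$.

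The main technical obstacle will be the bookkeeping across the three regimes and the simultaneous tracking of spatial and frequency variables: in contrast to the inhomogeneous case treated in \cite{NR12}, the dilation parameters $\{t_i\}$ are not bounded away from zero, so all estimates must be genuinely scale-invariant and the min/max structure of $w^{s,\delta}$ and $c^\delta$ must be handled on its own terms. In particular, verifying that the exponent loss from $\delta$ to $\delta/2$ suffices to run both the spatial convolution estimate (which generates a $(\max t/\min t)^{\nu/r}$ factor) and the Schur-type summation over $i$ (which requires strict decay past $\nu$) is the delicate step; this is where the additional moderation hypothesis \eqref{eq:ekstrabetingelse} on $h_2$ enters through Lemma \ref{pinball}.
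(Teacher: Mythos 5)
Your overall architecture is the same as the paper's: the telescoping of the $(t/t)^{s+\nu/2}$ prefactors, the reduction to $t_j\le t_k$, the trichotomy on the position of $t_i$, an inner discrete-convolution estimate for the sum over $l$, and an outer Schur-type frequency estimate for the sum over $i$ (the paper packages these as Lemmas \ref{life1} and \ref{life2}, the latter resting on Lemma \ref{pinball} and hypothesis \eqref{eq:ekstrabetingelse}, exactly as you anticipate). Two points, however, need repair. First, the reduction to $t_j\le t_k$ is not a literal symmetry: $w^{s,\delta}_{(j,m)(k,n)}$ is not invariant under swapping $(j,m)\leftrightarrow(k,n)$ because the exponents $\nu/r+\delta$ and $\delta$ in the minima are attached asymmetrically. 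The correct reduction is the weighted identity $w^{s,\delta}_{(j,m)(k,n)}=w^{2\nu/r-s-\nu,\delta}_{(k,n)(j,m)}$, which transfers the case $t_j>t_k$ to the case already proved for a different value of $s$ (harmless, since the lemma is claimed for all $s\in\R$).

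The more serious problem is that your inner-sum estimate is false as stated. The sum over $l$ runs over the lattice $\{x_{i,l}\}_{l\in\Z^d}$, whose spacing is comparable to $t_i^{-1}$, so the right-hand side must carry a factor recording the density of that lattice relative to the decay scales $t_a,t_b$; your factor $\bigl(\max(t_a,t_b)/\min(t_a,t_b)\bigr)^{\nu/r}$ with $t_a=\min(t_j,t_i)$, $t_b=\min(t_i,t_k)$ does not depend on $t_i$ once $t_i\ge t_k$. Concretely, take $t_j=t_k=1$ and $x_{j,m}=x_{k,n}$; then $t_a=t_b=1$ and your bound is $O(1)$ uniformly in $i$, whereas the left-hand side is $\sum_{l}(1+|x_{i,l}-x_{j,m}|_{\ca})^{-2(\nu/r+\delta)}\gtrsim t_i^{\nu}\to\infty$ as $t_i\to\infty$. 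This is precisely the regime that distinguishes the homogeneous setting: $t_i$ is unbounded above, so the regime $t_i\ge t_k$ cannot be dismissed. The correct statement (Lemma \ref{life1}) produces the factor $\max(t_i/t_k,1)^{\nu}$, obtained by a more careful split of the $l$-sum; this growing power of $t_i$ must then be absorbed by the decay $(t_k/t_i)^{\delta}(t_j/t_i)^{\nu/r+\delta}$ supplied by $c^{\delta}_{ik}$ and the first weight before the Schur sum over $i$ can be run, and it is exactly this absorption (and not the spatial convolution) that consumes the drop from $\delta$ to $\delta/2$. With your factor the bookkeeping in the regime $t_i>t_k$ does not close, so the proof as proposed has a genuine gap there.
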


It follows directly from Lemma \ref{poison} that for $\delta_1,\delta_2>0$ we have
\begin{equation}\label{smellsliketeen}
\sum_{i\in J,l\in \Z^d}
w^{s,\delta_1}_{(j,m)(i,l)}w^{s,\delta_2}_{(i,l)(k,n)}\le C
w^{s,\min(\delta_1,\delta_2)/2}_{(j,m)(k,n)}
\end{equation}
which proves that $\textrm{ad}_{p,q}^s(\tilde{h})$ is closed under composition, hence proving Proposition \ref{prop:lukket}.

\begin{proof}[Proof of Lemma \ref{poison}]
Notice that the factors $t_i^{s+\frac{\nu}{2}}$ in the first terms
of $w^{s,\delta}_{(j,m)(i,l)}$ and $w^{s,\delta}_{(i,l)(k,n)}$
cancel leaving $(t_k/t_j)^{s+\frac{\nu}{2}}$ which
can be moved outside the sums. Therefore we only need to deal with
the last three terms in $w^{s,\delta}_{(j,m)(i,l)}$ and
$w^{s,\delta}_{(i,l)(k,n)}$. First we consider the case $t_j\le t_k$
and split the sum over $i$ into three parts,
\begin{align*}
\sum_{i\in J,l\in \Z^d}
w^{s,\delta}_{(j,m)(i,l)}w^{s,\delta}_{(i,l)(k,n)}=&\left(\frac{t_k}{t_j}\right)^{s+\frac{\nu}{2}}\left(\sum_{i:t_i>t_k}+\sum_{i:t_j\le
t_i\le t_k}+\sum_{i:t_i<t_j}\right)\sum_{l\in\Z^d}\ldots\\
=&
\left(\frac{t_k}{t_j}\right)^{s+\frac{\nu}{2}}\left(\textrm{I}+\textrm{II}+\textrm{III}\right).
\end{align*}
For I, by using Lemma \ref{life1} and Lemma \ref{life2} below, we have
\begin{align*}
\textrm{I}=&\sum_{i:t_i>t_k}\sum_{l\in\Z^d}\left(\frac{t_j}{t_i}\right)^{\frac{\nu}{r}+\frac{\delta}{2}}
\left(\frac{t_k}{t_i}\right)^{\frac{\delta}{2}}c_{ji}^\delta c_{ik}^\delta\\
&\phantom{C}\times
\frac{1}{(1+t_j|x_{j,m}-x_{i,l}|_B)^{\frac{\nu}{r}+\delta}}\frac{1}{(1+t_k|x_{k,n}-x_{i,l}|_B)^{\frac{\nu}{r}+\delta}}\\
\le&\frac{C}{(1+t_j|x_{j,m}-x_{k,n}|_B)^{\frac{\nu}{r}+\delta}}\sum_{i:t_i>t_k}\left(\frac{t_j}{t_i}\right)^{\frac{\nu}{r}+\frac{\delta}{2}} \left(\frac{t_k}{t_i}\right)^{\frac{\delta}{2}-\nu}c_{ji}^\delta c_{ik}^\delta\\
\le&C\left(\frac{t_j}{t_k}\right)^{\frac{\nu}{r}+\frac{\delta}{2}}c_{jk}^{\delta/2}\frac{1}{(1+t_j|x_{j,m}-x_{k,n}|_B)^{\frac{\nu}{r}+\delta}}.
\end{align*}
Similarly for II we get
\begin{align*}
\textrm{II}=&\sum_{i:t_j\le t_i\le
t_k}\sum_{l\in\Z^d}\left(\frac{t_j}{t_i}\right)^{\frac{\nu}{r}+\frac{\delta}{2}}\left(\frac{t_i}{t_k}\right)^{\frac{\nu}{r}+\frac{\delta}{2}}
c_{ji}^\delta c_{ki}^\delta\\
&\phantom{C}\times\frac{1}{(1+t_j|x_{j,m}-x_{i,l}|_B)^{\frac{\nu}{r}+\delta}}\frac{1}{(1+t_i|x_{k,n}-x_{i,l}|_B)^{\frac{\nu}{r}+\delta}}\\
\le&C\left(\frac{t_j}{t_k}\right)^{\frac{\nu}{r}+\frac{\delta}{2}}c_{jk}^{\delta/2}\frac{1}{(1+t_j|x_{j,m}-x_{k,n}|)^{\frac{\nu}{r}+\delta}}.
\end{align*}
For III we get
\begin{align*}
\textrm{III}=&\sum_{i:t_i<t_j}\sum_{l\in\Z^d}\left(\frac{t_i}{t_j}\right)^{\frac{\delta}{2}}\left(\frac{t_i}{t_k}\right)^{\frac{\nu}{r}+\frac{\delta}{2}}
c_{ji}^\delta c_{ik}^\delta\\ &\phantom{C}\times
\frac{1}{(1+t_i|x_{j,m}-x_{i,l}|_B)^{\frac{\nu}{r}+\delta}}\frac{1}{(1+t_i|x_{k,n}-x_{i,l}|_B)^{\frac{\nu}{r}+\delta}}\\
\le&\sum_{i:t_i<t_j}C\left(\frac{t_i}{t_j}\right)^{\frac{\delta}{2}}\left(\frac{t_i}{t_k}\right)^{\frac{\nu}{r}+\frac{\delta}{2}}
c_{ji}^\delta c_{ik}^\delta \frac{1}{(1+t_i|x_{j,m}-x_{k,n}|_B)^{\frac{\nu}{r}+\delta}}\\
\le&\frac{C}{(1+t_j|x_{j,m}-x_{k,n}|_B)^{\frac{\nu}{r}+\delta}}\sum_{i:t_i<t_j}C\left(\frac{t_i}{t_j}\right)^{\frac{\delta}{2}-\frac{\nu}{r}-\delta}
\left(\frac{t_i}{t_k}\right)^{\frac{\nu}{r}+\frac{\delta}{2}}c_{ji}^\delta c_{ik}^\delta \\
\le&C\left(\frac{t_j}{t_k}\right)^{\frac{\nu}{r}+\frac{\delta}{2}}c_{jk}^{\delta/2}\frac{1}{(1+t_j|x_{j,m}-x_{k,n}|_B)^{\frac{\nu}{r}+\delta}}.
\end{align*}
In the case $t_j>t_k$, we observe that
$w^{s,\delta}_{(j,m)(k,n)}=w^{2\nu/r-s-\nu,\delta}_{(k,n)(j,m)}$, so
applying the first case to $w^{2\nu/r-s-\nu,\delta}_{(k,n)(j,m)}$
proves the proposition for $t_j>t_k$. \\
\end{proof}

The following two technical lemmas are used for the proof of Lemma \ref{poison}.
\begin{mylemma}\label{life1}%
Assume that $t_j\le t_k$, $N>\nu$ and
\begin{align*}
g:=\sum_{l\in\Z^d}\frac{1}{(1+\min(t_j,t_i)|x_{j,m}-x_{i,l}|_B)^{N}}\frac{1}{(1+\min(t_k,t_i)|x_{k,n}-x_{i,l}|_B)^{N}},
\end{align*}
where we have used the notation from Definition \ref{def:almostdiagonal}.
We then have
\begin{align*}
g\le
\frac{C}{(1+\min(t_j,t_i)|x_{j,m}-x_{k,n}|_B)^{N}}\max\left(\frac{t_i}{t_k},1\right)^\nu.
\end{align*}

\end{mylemma}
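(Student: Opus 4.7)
The strategy is to reduce the sum over $l\in\Z^d$ to an integral using the lattice structure of the points $\{x_{i,l}\}_{l\in\Z^d}$, and then to bound the resulting integral by splitting its domain using the quasi-triangle inequality for $|\cdot|_B$.

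First I would exploit the fact that $x_{i,l}=A_i^{-1}(\pi/a)l$ form an anisotropic lattice with $|\cdot|_B$-spacing $\sim t_i^{-1}$, with an associated tiling of $\R^d$ by cells of volume $\asymp t_i^{-\nu}$ (essentially the reflections of the sets $Q(i,l)$ from \eqref{eq:pointsets}). Set $\alpha:=\min(t_j,t_i)$, $\beta:=\min(t_k,t_i)$, and $a:=|x_{j,m}-x_{k,n}|_B$; since $t_j\le t_k$ we have $\alpha\le\beta$. Crucially, both $\alpha$ and $\beta$ are bounded above by $t_i$, so the quasi-triangle inequality shows that each factor in the summand varies by only a bounded multiplicative constant across a single cell. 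Therefore the sum is comparable to $t_i^\nu$ times the corresponding integral:
\[
g\le C\,t_i^{\nu}\int_{\R^d}\frac{dy}{(1+\alpha|y-x_{j,m}|_B)^{N}(1+\beta|y-x_{k,n}|_B)^{N}}.
\]
Since $t_i^{\nu}\beta^{-\nu}=\max(t_i/t_k,1)^{\nu}$, the lemma will follow once I establish that the above integral is at most $C\beta^{-\nu}(1+\alpha a)^{-N}$.

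Next I would split $\R^d=\Omega_A\cup\Omega_B$ with $\Omega_A:=\{y:|y-x_{j,m}|_B\ge a/(2K)\}$ and $\Omega_B$ its complement. On $\Omega_A$ one has $1+\alpha|y-x_{j,m}|_B\ge C^{-1}(1+\alpha a)$, so factoring this out and integrating $(1+\beta|y-x_{k,n}|_B)^{-N}$ in $y$ (using $N>\nu$) immediately yields a contribution $\lesssim \beta^{-\nu}(1+\alpha a)^{-N}$, which is the target bound.

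The main obstacle is the $\Omega_B$ piece, since there the natural decay the integrand provides is $(1+\beta a)^{-N}$, not $(1+\alpha a)^{-N}$, and $\beta a$ can be much larger than $\alpha a$. On $\Omega_B$ the quasi-triangle inequality forces $|y-x_{k,n}|_B\ge a/(2K)$, so one factors out $(1+\beta a)^{-N}$; the remaining integral of $(1+\alpha|y-x_{j,m}|_B)^{-N}$ over a ball of $|\cdot|_B$-radius $a/(2K)$ is bounded by $C\alpha^{-\nu}\min((\alpha a)^{\nu},1)$. A short case analysis on whether $\alpha a\lessgtr 1$ (and noting that in the large-$\alpha a$ regime $1+\beta a\sim\beta a$ while $1+\alpha a\sim\alpha a$) then shows, using $\beta\ge\alpha$ and $N>\nu$, that the product $\alpha^{-\nu}\min((\alpha a)^{\nu},1)(1+\beta a)^{-N}$ is dominated by $\beta^{-\nu}(1+\alpha a)^{-N}$. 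Combining the two contributions and multiplying by $t_i^{\nu}$ gives the stated estimate.
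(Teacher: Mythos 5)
Your argument is correct, and it differs from the paper's in one genuine respect. The paper stays entirely discrete: it invokes Lemma \ref{rock} (with $r=1$ and $s_{k,n}\equiv 1$) to obtain $\sum_{l}(1+\min(t_k,t_i)|x_{k,n}-x_{i,l}|_B)^{-N}\le C\max(t_i/t_k,1)^{\nu}$, disposes of the regime $\min(t_j,t_i)|x_{j,m}-x_{k,n}|_B\le 1$ trivially, and otherwise splits the index set according to whether $|x_{j,m}-x_{i,l}|_B$ is smaller or larger than $\tfrac{1}{2K}|x_{j,m}-x_{k,n}|_B$ --- exactly your $\Omega_B/\Omega_A$ dichotomy. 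You instead replace the appeal to Lemma \ref{rock} by a sum-to-integral comparison (legitimate precisely because $\min(t_j,t_i),\min(t_k,t_i)\le t_i$, so both factors are comparable to constants on lattice cells of volume $\asymp t_i^{-\nu}$) and then prove directly the standard convolution bound $\int(1+\alpha|y-x|_B)^{-N}(1+\beta|y-z|_B)^{-N}\,\d y\le C\beta^{-\nu}(1+\alpha|x-z|_B)^{-N}$ for $\alpha\le\beta$ and $N>\nu$; your case analysis on $\alpha a\lessgtr 1$ and the absorption of the factor $(\beta/\alpha)^{\nu}$ using $N>\nu$ both check out. The two proofs share the essential geometric step (the quasi-triangle split and the transfer of decay from the fine-scale factor to the coarse-scale one), so what your route buys is mainly self-containedness: it makes the continuous analogue explicit (cf.\ the estimate the authors themselves quote from \cite[Appendix B]{MR3243741} in the proof of Lemma \ref{lemma:enikkebandlimited}), whereas the paper's version recycles Lemma \ref{rock}, which it needs elsewhere anyway. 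One cosmetic caution: avoid reusing the symbol $a$ for $|x_{j,m}-x_{k,n}|_B$, since $a$ already denotes the cube parameter entering $x_{j,n}=A_j^{-1}\tfrac{\pi}{a}n$.
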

\begin{proof}
Note that from Lemma \ref{rock} with $r=1$ and $s_{k,n}=1$, it
follows that
\begin{align}\label{ballroom}
\sum_{l\in\Z^d}\frac{1}{(1+\min(t_k,t_i)|x_{k,n}-x_{i,l}|_B)^{N}}\le
C\max\left(\frac{t_i}{t_k},1\right)^\nu.
\end{align}
We first consider the case $\min(t_j,t_i)|x_{j,m}-x_{k,n}|_B\le 1$
which gives
\begin{align*}
g&\le \sum_{l\in\Z^d}\frac{1}{(1+\min(t_k,t_i)|x_{k,n}-x_{i,l}|_B)^{N}}\\
&\le C\max\left(\frac{t_i}{t_k},1\right)^\nu\\
&\le
\frac{C}{(1+\min(t_j,t_i)|x_{j,m}-x_{k,n}|_B)^{N}}\max\left(\frac{t_i}{t_k},1\right)^\nu.
\end{align*}
For the case $\min(t_j,t_i)|x_{j,m}-x_{k,n}|_B> 1$ we split the sum
into
\begin{align*}
A=\big\{l\in\Z^d:|x_{j,m}-x_{i,l}|_B<\tfrac{1}{2K}|x_{j,m}-x_{k,n}|_B\big\}
\end{align*}
and its complement. For $A^c$ we have
\begin{align*}
\frac{1}{(1+\min(t_j,t_i)|x_{j,m}-x_{i,l}|_B)^{N}}\le
\frac{(2K)^N}{(1+\min(t_j,t_i)|x_{j,m}-x_{k,n}|_B)^{N}},
\end{align*}
and by using \eqref{ballroom}, the desired estimate follows. For
$l\in A$, we notice that
$|x_{k,n}-x_{i,l}|_B>\tfrac{1}{2K}|x_{j,m}-x_{k,n}|_B$ and get
\begin{align}
(1+\min(t_k,t_i)|&x_{k,n}-x_{i,l}|_B)^{-N}\notag \\
&\le\left(1+\tfrac{1}{2K}\min(t_j,t_i)|x_{j,m}-x_{k,n}|_B\frac{\min(t_k,t_i)}{\min(t_j,t_i)}\right)^{-N}\notag \\
&\le
\frac{C}{(1+\min(t_j,t_i)|x_{j,m}-x_{k,n}|_B)^{N}}\left(\frac{\min(t_j,t_i)}{\min(t_k,t_i)}\right)^\nu.
\label{youth}
\end{align}
Next, by using \eqref{ballroom} with $j$ instead of
$k$ we get
\begin{align}
\sum_{l\in\Z^d}\frac{1}{(1+\min(t_j,t_i)|x_{j,m}-x_{i,l}|_B)^{N}}\le
C\max\left(\frac{t_i}{t_j},1\right)^\nu\label{today}.
\end{align}
The lemma follows by combining \eqref{youth} and \eqref{today}.\\
\end{proof}
Finally, we also used the following estimate in frequency space to prove Proposition \ref{poison}.
\begin{mylemma}\label{life2}%
Let $\delta>0$ and $0<r\le 1$. We then have
\begin{equation*}
h:=\sum_{i\in J} c^\delta_{ji} c^{\delta}_{ik} \le C c^{\delta/2}_{jk},
\end{equation*}
where
\begin{equation*}
c_{jk}^\delta:=\min\bigg(\bigg(\frac{t_j}{t_k}\bigg)^{\frac{\nu}{r}+\delta},\bigg(\frac{t_k}{t_j}\bigg)^{\delta}\bigg)
(1+\max(t_k,t_j)^{-1}|\xi_k-\xi_j|_{\mathbf{a}})^{-\frac{\nu}{r}-\delta}, \quad j,k\in J,
\end{equation*}
with the notation from Definition \ref{def:almostdiagonal}.
\end{mylemma}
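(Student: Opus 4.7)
Without loss of generality assume $t_j\le t_k$ (the opposite case follows by the symmetry $c^\delta_{jk}=c^\delta_{kj}$ up to the exponent prefactor, handled exactly as in the last step of the proof of Lemma \ref{poison}). The plan is to split the sum over $i$ according to the position of $t_i$ relative to $t_j$ and $t_k$:
\[
h=\sum_{i\in J} c^\delta_{ji}c^\delta_{ik}=\Bigl(\sum_{i:t_i<t_j}+\sum_{i:t_j\le t_i\le t_k}+\sum_{i:t_i>t_k}\Bigr)c^\delta_{ji}c^\delta_{ik}=:\mathrm{I}+\mathrm{II}+\mathrm{III}.
\]
In each regime the min in $c^\delta_{ji}$ and $c^\delta_{ik}$ is resolved to one of its two branches, which in turn determines the combined scaling prefactor in $(t_j/t_i),(t_i/t_k)$.

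The key spatial ingredient in Lemma \ref{life1} has a direct frequency analog: for any $i$, the quasi-triangle inequality gives $|\xi_j-\xi_k|_{\mathbf{a}}\le K(|\xi_j-\xi_i|_{\mathbf{a}}+|\xi_i-\xi_k|_{\mathbf{a}})$, so in the product of decay factors one of the two is bounded below by (a constant multiple of) $\min(t_j,t_i)^{-1}|\xi_j-\xi_k|_{\mathbf{a}}$ or $\min(t_i,t_k)^{-1}|\xi_j-\xi_k|_{\mathbf{a}}$, depending on which term is larger. Splitting the summation domain into $\{|\xi_j-\xi_i|_{\mathbf{a}}<\tfrac{1}{2K}|\xi_j-\xi_k|_{\mathbf{a}}\}$ and its complement (as was done in Lemma \ref{life1}) therefore allows one to factor out $(1+\max(t_j,t_k)^{-1}|\xi_j-\xi_k|_{\mathbf{a}})^{-\nu/r-\delta}$ at the cost of rescaling by a power of $\max(t_j,t_k)/\min(t_i,\cdot)$. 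What remains inside the sum over $i$ is a single decay factor of the form $(1+\max(t_i,t_\ell)^{-1}|\xi_i-\xi_\ell|_{\mathbf{a}})^{-\nu/r-\delta}$, with $\ell\in\{j,k\}$, together with a power of the scale ratio.

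At this point the residual sum in each of the three regimes is exactly of the type handled by Lemma \ref{pinball}, provided one gives up half of the $\delta$-decay of the exponent prefactor to ensure geometric convergence of the scale sum; the other $\delta/2$ is retained and ends up as the exponent in $c^{\delta/2}_{jk}$. Concretely, in regime I one uses $(t_i/t_j)^{\delta/2}(t_i/t_k)^{\nu/r+\delta/2}$ as the decaying scaling factor in the Lemma \ref{pinball}-type sum and keeps $(t_j/t_k)^{\nu/r+\delta/2}$ on the outside; in II one writes $(t_j/t_i)^{\nu/r+\delta/2}(t_i/t_k)^{\nu/r+\delta/2}\le(t_j/t_k)^{\nu/r+\delta/2}(t_j/t_i)^{\delta/2}$ or a symmetric version suitable for the summation; in III one exchanges the roles of $j$ and $k$ in the same manner. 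Collecting the three estimates and recombining the retained prefactor $(t_j/t_k)^{\nu/r+\delta/2}$ with the frequency decay $(1+t_k^{-1}|\xi_j-\xi_k|_{\mathbf{a}})^{-\nu/r-\delta/2}$ yields exactly $C\,c^{\delta/2}_{jk}$.

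The main obstacle is bookkeeping: one must carefully track which branch of the two minima is active for each of the three scale regimes, and verify that in every case the exponents left over after applying Lemma \ref{pinball} combine to at least $\nu/r+\delta/2$ in the scale prefactor and $\nu/r+\delta/2$ in the frequency decay, so that the final bound really is $c^{\delta/2}_{jk}$ and not a weaker analogue. Nothing genuinely new is required beyond the quasi-triangle estimate and the admissibility-based sum control already developed in Lemmas \ref{life1} and \ref{pinball}.
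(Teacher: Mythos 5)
Your sketch follows essentially the same route as the paper's proof: reduce to $t_j\le t_k$ (using $c^\delta_{jk}=(t_j/t_k)^{\nu/r}c^\delta_{kj}$ for the reverse case), split the sum over $i$ by the position of $t_i$ and by whether $|\xi_j-\xi_i|_{\mathbf a}<\tfrac{1}{2K}|\xi_j-\xi_k|_{\mathbf a}$, use the quasi-triangle inequality to extract the factor $(1+t_k^{-1}|\xi_j-\xi_k|_{\mathbf a})^{-\nu/r-\delta/2}$ at the cost of half the $\delta$, and control the residual sums with Lemma \ref{pinball}. The only organizational point you omit is that the paper first disposes of the regime $t_k^{-1}|\xi_j-\xi_k|_{\mathbf a}\le\rho_0$ by moderation of $\tilde h$ (which forces $t_k/t_j\le R_0$, so $c^{\delta}_{jk}\asymp$ const and Lemma \ref{pinball} alone suffices); otherwise the strategies coincide.
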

\begin{proof}
Without loss of generality assume that $r=1$. We
will begin with assuming that $t_j\le t_k$. Furthermore, if
$t_k^{-1}|\xi_j-\xi_k|_{\mathbf{a}} \le \rho_0$ we have $t_k/t_j \le R_0$ by using that $\tilde{h}$ is moderate. Combining this with Lemma \ref{pinball} gives
\begin{equation*}
h \le \sum_{i\in J} c_{ik}^\delta \le C_1 \le C_2 c_{jk}^{\delta}.
\end{equation*}
In the other case, $t_k^{-1}|\xi_j-\xi_k|_{\mathbf{a}} > \rho_0$, we split the
sum into
\begin{equation*}
A=\{i\in J:|\xi_j-\xi_i|_{\mathbf{a}}<\tfrac{1}{2K}|\xi_j-\xi_k|_{\mathbf{a}}\}
\end{equation*}
and its complement. For $i\in A^c$ and $t_i\ge t_k \ge t_j$ we
have
\begin{align*}
h\le& C\sum_{\substack{i\in A^c\\i:t_i\ge t_k}}
\bigg(\frac{t_j}{t_i}\bigg)^{\nu+\delta}
(1+t_i^{-1}|\xi_j-\xi_k|_{\mathbf{a}})^{-\nu-\delta}c_{ik}^\delta\\
\le& C\bigg(\frac{t_j}{t_k}\bigg)^{\nu+\delta}
(1+t_k^{-1}|\xi_j-\xi_k|_{\mathbf{a}})^{-\nu-\delta}\sum_{\substack{i\in
A^c\\i:t_i\ge t_k}}c_{ik}^\delta\\
\le& C c_{jk}^\delta
\end{align*}
and similarly for $t_k>t_i\ge t_j$. For $t_k\ge t_j> t_i$ we get
\begin{align*}
h\le& C\sum_{\substack{i\in A^c\\i:t_i< t_j}}
\bigg(\frac{t_i}{t_j}\bigg)^{\delta}(1+t_j^{-1}|\xi_j-\xi_k|_{\mathbf{a}})^{-\nu-\delta}c_{ik}^\delta\\
\le&C\bigg(\frac{t_j}{t_k}\bigg)^{\nu+\delta}(1+t_k^{-1}|\xi_j-\xi_k|_{\mathbf{a}})^{-\nu-\delta}
\sum_{\substack{i\in A^c\\i:t_i< t_j}}c_{ik}^\delta\\
\le& C c_{jk}^\delta.
\end{align*}
Finally, when $i\in A$ we have
$|\xi_i-\xi_k|_{\mathbf{a}}>\tfrac{1}{2K}|\xi_j-\xi_k|_{\mathbf{a}}$ which for $t_i\ge
t_k \ge t_j$ gives
\begin{align*}
h\le& C\sum_{\substack{i\in A\\i:t_i\ge t_k}}
\bigg(\frac{t_k}{t_i}\bigg)^{\delta}\bigg(\frac{t_j}{t_i}\bigg)^{\nu+\delta}
(1+t_i^{-1}|\xi_j-\xi_i|_{\mathbf{a}})^{-\nu-\frac{\delta}{2}}
(1+t_i^{-1}|\xi_j-\xi_k|_{\mathbf{a}})^{-\nu-\frac{\delta}{2}} \\
\le& C\bigg(\frac{t_j}{t_k}\bigg)^{\nu+\frac{\delta}{2}}
(1+t_k^{-1}|\xi_j-\xi_k|_{\mathbf{a}})^{-\nu-\frac{\delta}{2}}
\sum_{\substack{i\in A\\i:t_i\ge t_k}}
\bigg(\frac{t_j}{t_i}\bigg)^{\frac{\delta}{2}}
(1+t_i^{-1}|\xi_j-\xi_i|_{\mathbf{a}})^{-\nu-\frac{\delta}{2}} \\
\le& C c_{jk}^{\delta/2}.
\end{align*}
For $t_k>t_i\ge t_j$ and $t_k\ge t_j> t_i$ the argument can be
repeated in a similar way which proves the lemma when $t_k\ge
t_j$. For $t_k<t_j$, it suffices to use that $
c_{jk}^\delta=(t_j/t_k)^\nu c_{kj}^\delta$, and we get
\begin{align*}
h=\sum_{i\in J} \bigg(\frac{t_j}{t_i}\bigg)^{\nu}c_{ij}^\delta
\bigg(\frac{t_i}{t_k}\bigg)^{\nu}c_{ki}^\delta\le C
\bigg(\frac{t_j}{t_k}\bigg)^{\nu}c_{kj}^{\delta/2}=c_{jk}^{\delta/2}.
\end{align*}
\end{proof}


\begin{thebibliography}{10}
	
	\bibitem{AN19}
	Z.~Al-Jawahri and M.~Nielsen.
	\newblock On homogeneous decomposition spaces and associated decompositions of
	distribution spaces.
	\newblock {\em Mathematische Nachrichten}, 292(12):2496--2521, 2019.
	
	\bibitem{Nielsen}
	L.~Borup and M.~Nielsen.
	\newblock Frame decomposition of decomposition spaces.
	\newblock {\em J. Fourier Anal. Appl.}, 13(1):39--70, 2007.
	
	\bibitem{BN08}
	L.~Borup and M.~Nielsen.
	\newblock On anisotropic {T}riebel-{L}izorkin type spaces, with applications to
	the study of pseudo-differential operators.
	\newblock {\em J. Funct. Spaces Appl.}, 6(2):107--154, 2008.
	
	\bibitem{MR2179611}
	M.~Bownik.
	\newblock Atomic and molecular decompositions of anisotropic {B}esov spaces.
	\newblock {\em Math. Z.}, 250(3):539--571, 2005.
	
	\bibitem{BH05}
	M.~Bownik and K.-P. Ho.
	\newblock Atomic and molecular decompositions of anisotropic
	{T}riebel-{L}izorkin spaces.
	\newblock {\em Trans. Amer. Math. Soc.}, 358(4):1469--1510, 2006.
	
	\bibitem{MR3707993}
	G.~Cleanthous, A.~G. Georgiadis, and M.~Nielsen.
	\newblock Anisotropic mixed-norm {H}ardy spaces.
	\newblock {\em J. Geom. Anal.}, 27(4):2758--2787, 2017.
	
	\bibitem{MR3981281}
	G.~Cleanthous, A.~G. Georgiadis, and M.~Nielsen.
	\newblock Molecular decomposition of anisotropic homogeneous mixed-norm spaces
	with applications to the boundedness of operators.
	\newblock {\em Appl. Comput. Harmon. Anal.}, 47(2):447--480, 2019.
	
	\bibitem{MR3016517}
	S.~Dahlke, S.~H\"auser, G.~Steidl, and G.~Teschke.
	\newblock Shearlet coorbit spaces: traces and embeddings in higher dimensions.
	\newblock {\em Monatsh. Math.}, 169(1):15--32, 2013.
	
	\bibitem{MR3303346}
	S.~Dekel, G.~Kerkyacharian, G.~Kyriazis, and P.~Petrushev.
	\newblock Compactly supported frames for spaces of distributions associated
	with nonnegative self-adjoint operators.
	\newblock {\em Studia Math.}, 225(2):115--163, 2014.
	
	\bibitem{MR910054}
	H.~G. Feichtinger.
	\newblock Banach spaces of distributions defined by decomposition methods.
	{II}.
	\newblock {\em Math. Nachr.}, 132:207--237, 1987.
	
	\bibitem{MR809337}
	H.~G. Feichtinger and P.~Gr\"obner.
	\newblock Banach spaces of distributions defined by decomposition methods. {I}.
	\newblock {\em Math. Nachr.}, 123:97--120, 1985.
	
	\bibitem{MR1070037}
	M.~Frazier and B.~Jawerth.
	\newblock A discrete transform and decompositions of distribution spaces.
	\newblock {\em J. Funct. Anal.}, 93(1):34--170, 1990.
	
	\bibitem{MR3345605}
	H.~F\"uhr and F.~Voigtlaender.
	\newblock Wavelet coorbit spaces viewed as decomposition spaces.
	\newblock {\em J. Funct. Anal.}, 269(1):80--154, 2015.
	
	\bibitem{MR4029179}
	A.~G. Georgiadis, G.~Kerkyacharian, G.~Kyriazis, and P.~Petrushev.
	\newblock Atomic and molecular decomposition of homogeneous spaces of
	distributions associated to non-negative self-adjoint operators.
	\newblock {\em J. Fourier Anal. Appl.}, 25(6):3259--3309, 2019.
	
	\bibitem{MR3243741}
	L.~Grafakos.
	\newblock {\em Modern {F}ourier analysis}, volume 250 of {\em Graduate Texts in
		Mathematics}.
	\newblock Springer, New York, third edition, 2014.
	
	\bibitem{MR3969420}
	L.~Huang, J.~Liu, D.~Yang, and W.~Yuan.
	\newblock Atomic and {L}ittlewood-{P}aley characterizations of anisotropic
	mixed-norm {H}ardy spaces and their applications.
	\newblock {\em J. Geom. Anal.}, 29(3):1991--2067, 2019.
	
	\bibitem{MR1862566}
	G.~Kyriazis and P.~Petrushev.
	\newblock New bases for {T}riebel-{L}izorkin and {B}esov spaces.
	\newblock {\em Trans. Amer. Math. Soc.}, 354(2):749--776, 2002.
	
	\bibitem{MR2204289}
	G.~Kyriazis and P.~Petrushev.
	\newblock On the construction of frames for {T}riebel-{L}izorkin and {B}esov
	spaces.
	\newblock {\em Proc. Amer. Math. Soc.}, 134(6):1759--1770, 2006.
	
	\bibitem{MR2891260}
	G.~Kyriazis and P.~Petrushev.
	\newblock ``{C}ompactly'' supported frames for spaces of distributions on the
	ball.
	\newblock {\em Monatsh. Math.}, 165(3-4):365--391, 2012.
	
	\bibitem{MR3048591}
	D.~Labate, L.~Mantovani, and P.~Negi.
	\newblock Shearlet smoothness spaces.
	\newblock {\em J. Fourier Anal. Appl.}, 19(3):577--611, 2013.
	
	\bibitem{NR12}
	M.~Nielsen and K.~N. Rasmussen.
	\newblock Compactly supported frames for decomposition spaces.
	\newblock {\em J. Fourier Anal. Appl.}, 18(1):87--117, 2012.
	
	\bibitem{MR1232192}
	E.~M. Stein.
	\newblock {\em Harmonic analysis: real-variable methods, orthogonality, and
		oscillatory integrals}, volume~43 of {\em Princeton Mathematical Series}.
	\newblock Princeton University Press, Princeton, NJ, 1993.
	\newblock With the assistance of Timothy S. Murphy, Monographs in Harmonic
	Analysis, III.
	
	\bibitem{Stein}
	E.~M. Stein and S.~Wainger.
	\newblock Problems in harmonic analysis related to curvature.
	\newblock {\em Bull. Amer. Math. Soc.}, 84(6):1239--1295, 1978.
	
	\bibitem{MR725159}
	H.~Triebel.
	\newblock Modulation spaces on the {E}uclidean {$n$}-space.
	\newblock {\em Z. Anal. Anwendungen}, 2(5):443--457, 1983.
	
	\bibitem{MR3409094}
	H.~Triebel.
	\newblock {\em Tempered homogeneous function spaces}.
	\newblock EMS Series of Lectures in Mathematics. European Mathematical Society
	(EMS), Z\"urich, 2015.
	
	\bibitem{MR3682619}
	H.~Triebel.
	\newblock Tempered homogeneous function spaces, {II}.
	\newblock In {\em Functional analysis, harmonic analysis, and image processing:
		a collection of papers in honor of {B}j\"orn {J}awerth}, volume 693 of {\em
		Contemp. Math.}, pages 331--361. Amer. Math. Soc., Providence, RI, 2017.
	
	\bibitem{2016arXiv160102201V}
	F.~{Voigtlaender}.
	\newblock {Embeddings of Decomposition Spaces into Sobolev and BV Spaces}.
	\newblock {\em ArXiv e-prints}, Jan. 2016.
	
\end{thebibliography}
\end{document}